\newcommand\hcancel[2][black]{\setbox0=\hbox{$#2$}%
\rlap{\raisebox{.25\ht0}{\textcolor{#1}{\rule{0.7\wd0}{0.75pt}}}}#2} 
\newcommand\hcancelt[2][black]{\setbox0=\hbox{$#2$}%
\rlap{\raisebox{.25\ht0}{\textcolor{#1}{\hspace{0.3mm}\rule{0.7\wd0}{0.75pt}}}}#2} 
\newtheorem{thm}{Theorem}[section]
\newtheorem{cor}{Corollary}[section]
\newtheorem{rem}{Remark}[section]
\theoremstyle{definition}
\numberwithin{algorithm}{section}
\numberwithin{equation}{section}
\renewcommand{\theequation}{\thesection.\arabic{equation}}
\def\simgt{\,\hbox{\lower0.6ex\hbox{$>$}\llap{\raise0.3ex\hbox{$\sim$}}}\,}
\def\simlt{\,\hbox{\lower0.6ex\hbox{$<$}\llap{\raise0.3ex\hbox{$\sim$}}}\,}
\def\simgteq{\,\hbox{\lower0.6ex\hbox{$\ge$}\llap{\raise0.6ex\hbox{$\sim$}}}\,}
\def\simlteq{\,\hbox{\lower0.6ex\hbox{$\le$}\llap{\raise0.6ex\hbox{$\sim$}}}\,}
\def\applteq{\,\hbox{\lower0.6ex\hbox{$\le$}\llap{\raise0.8ex\hbox{$\approx$}}}\,}
\def\applt{\,\hbox{\lower0.6ex\hbox{$<$}\llap{\raise0.5ex\hbox{$\approx$}}}\,}
\DeclareMathAlphabet\mathbfcal{OMS}{cmsy}{b}{n}
\DeclareMathOperator*{\ind}{ind}
\DeclareMathOperator*{\indmax}{indmax}
\DeclareMathOperator*{\indmin}{indmin}
\let\emptyset\varnothing
\def\user@resume{resume}
\def\user@intermezzo{intermezzo}
\newcounter{previousequation}
\newcounter{lastsubequation}
\newcounter{savedparentequation}
\renewenvironment{subequations}[1][]{%
      \def\user@decides{#1}%
      \setcounter{previousequation}{\value{equation}}%
      \ifx\user@decides\user@resume 
           \setcounter{equation}{\value{savedparentequation}}%
      \else  
      \ifx\user@decides\user@intermezzo
           \refstepcounter{equation}%
      \else
           \setcounter{lastsubequation}{0}%
           \refstepcounter{equation}%
      \fi\fi
      \protected@edef\theHparentequation{%
          \@ifundefined {theHequation}\theequation \theHequation}%
      \protected@edef\theparentequation{\theequation}%
      \setcounter{parentequation}{\value{equation}}%
      \ifx\user@decides\user@resume 
           \setcounter{equation}{\value{lastsubequation}}%
         \else
           \setcounter{equation}{0}%
      \fi
      \def\theequation  {\theparentequation  \alph{equation}}%
      \def\theHequation {\theHparentequation \alph{equation}}%
      \ignorespaces
}{%
  \ifx\user@decides\user@resume
       \setcounter{lastsubequation}{\value{equation}}%
       \setcounter{equation}{\value{previousequation}}%
  \else
  \ifx\user@decides\user@intermezzo
       \setcounter{equation}{\value{parentequation}}%
  \else
       \setcounter{lastsubequation}{\value{equation}}%
       \setcounter{savedparentequation}{\value{parentequation}}%
       \setcounter{equation}{\value{parentequation}}%
  \fi\fi
  \ignorespacesafterend
}
\newcommand{\C}[1]{\mathcal{#1}}
\newcommand{\F}[1]{\mathbf{#1}}
\newcommand{\FR}[1]{\mathfrak{#1}}
\newcommand{\MB}[1]{\mathbb{#1}}
\newcommand{\MBS}{\MB{S}}
\newcommand{\MBR}{\mathbb{R}}
\newcommand{\MBG}{\mathbb{G}}
\newcommand{\MBRzer}{\MBR_0}
\newcommand{\MBRzerP}{\MBRzer^+}
\newcommand{\MBZ}{\mathbb{Z}}
\newcommand{\MBZP}{\MBZ^+}
\newcommand{\MBZzer}{\MBZ_0}
\newcommand{\MBZzerP}{\MBZzer^+}
\newcommand{\MBZe}{\MBZ_e}
\newcommand{\MBZeP}{\MBZe^+}
\newcommand{\MBZzereP}{\MBZ_{0,e}^+}
\newcommand{\MBZOP}{\MBZ_{o}^+}
\newcommand{\MBT}{\mathbb{T}}
\newcommand{\MBJ}{\mathbb{J}}
\newcommand{\MBF}{\mathfrak{F}}
\newcommand{\MBC}{\mathfrak{C}}
\newcommand{\MBK}{\mathfrak{K}}
\newcommand{\Fthe}{\F{\Theta}}
\newcommand{\canczer}[1]{#1_{\hcancel{0}}}
\newcommand{\cancbra}[1]{\hcancel{[}#1\hcancelt{]}}
\newcommand{\sumd}{\sideset{}{'}}
\newcommand{\bs}{\bar s}
\newcommand{\bu}{\bar u}
\newcommand{\bms}{\bm{s}}
\newcommand{\bmu}{\bm{u}}
\newcommand{\Sin}{s_{\text{in}}}
\newcommand{\bmzer}{\bm{\mathit{0}}}
\newcommand{\bmone}{\bm{\mathit{1}}}
\newcommand{\bmX}{\bm{X}}
\newcommand{\foralla}{\,\forall_{\mkern-6mu a}\,}
\newcommand{\foralle}{\,\forall_{\mkern-6mu e}\,}
\newcommand{\foralls}{\,\forall_{\mkern-6mu s}\,}
\newcommand{\Def}[1]{\text{Def}\left(#1\right)}
\newcommand{\Int}[1]{\text{int}\left(#1\right)}
\def\BState{\State\hskip-\ALG@thistlm}
    \newcommand*{\algrule}[1][\algorithmicindent]{\makebox[#1][l]{\hspace*{.5em}\thealgruleextra\vrule height \thealgruleheight depth \thealgruledepth}}%
\newcommand*{\thealgruleextra}{}
\newcommand*{\thealgruleheight}{.75\baselineskip}
\newcommand*{\thealgruledepth}{.25\baselineskip}
\def\ALG@printindent{%
    \ifnum \theALG@nested>0
        \ifx\ALG@text\ALG@x@notext
        \else
            \unskip
            \addvspace{-1pt}
            \ALG@printindent@tempcnta=1
            \loop
                \algrule[\csname ALG@ind@\the\ALG@printindent@tempcnta\endcsname]%
                \advance \ALG@printindent@tempcnta 1
            \ifnum \ALG@printindent@tempcnta<\numexpr\theALG@nested+1\relax
            \repeat
        \fi
    \fi
    }%
\patchcmd{\ALG@doentity}{\noindent\hskip\ALG@tlm}{\ALG@printindent}{}{\errmessage{failed to patch}}
\newbox\statebox
\newcommand{\myState}[1]{%
    \setbox\statebox=\vbox{#1}%
    \edef\thealgruleheight{\dimexpr \the\ht\statebox+1pt\relax}%
    \edef\thealgruledepth{\dimexpr \the\dp\statebox+1pt\relax}%
    \ifdim\thealgruleheight<.75\baselineskip
        \def\thealgruleheight{\dimexpr .75\baselineskip+1pt\relax}%
    \fi
    \ifdim\thealgruledepth<.25\baselineskip
        \def\thealgruledepth{\dimexpr .25\baselineskip+1pt\relax}%
    \fi
    \State #1%
    \def\thealgruleheight{\dimexpr .75\baselineskip+1pt\relax}%
    \def\thealgruledepth{\dimexpr .25\baselineskip+1pt\relax}%
}
\begin{document}
\begin{frontmatter}
\title{New Optimal Periodic Control Policy for the Optimal Periodic Performance of a Chemostat Using a Fourier-Gegenbauer-Based Predictor-Corrector Method}
\author[assiut]{Kareem T. Elgindy\corref{cor1}}
\ead{kareem.elgindy@gmail.com}
\cortext[cor1]{Corresponding author}
\address[assiut]{Mathematics Department, Faculty of Science, Assiut University, Assiut 71516, Egypt}

\begin{abstract}
In its simplest form, a chemostat consists of microorganisms or cells that grow continually in a specific phase of growth while competing for a single limiting nutrient. Under certain conditions of the cell growth rate, substrate concentration, and dilution rate, the theory predicts and numerical experiments confirm that a periodically operated chemostat exhibits an ``overyielding'' state in which the performance becomes higher than that at steady-state operation. In this paper, we show that an optimal periodic control policy for maximizing chemostat performance can be accurately and efficiently derived numerically using a novel class of integral pseudospectral (IPS) methods and adaptive $h$-IPS methods composed through a predictor-corrector algorithm. New formulas for the construction of Fourier pseudospectral (PS) integration matrices and barycentric-shifted Gegenbauer (SG) quadratures are derived. A rigorous study of the errors and convergence rates of SG quadratures, as well as the truncated Fourier series, interpolation operators, and integration operators for nonsmooth and generally $T$-periodic functions,  is presented. We also introduce a novel adaptive scheme for detecting jump discontinuities and reconstructing a piecewise analytic function from PS data. An extensive set of numerical simulations is presented to support the derived theoretical foundations.
\end{abstract}
\begin{keyword}
Adaptive method \sep Chemostat model \sep Fourier interpolation \sep Gegenbauer polynomials \sep $h$-integral pseudospectral \sep Integration matrix \sep Optimal control \sep Predictor-Corrector \sep Pseudospectral method.
\end{keyword}
\end{frontmatter}
\section{Introduction}
\label{Int} 
A chemostat is a laboratory bioreactor in which a microbial culture in a well-stirred culture medium is continuously supplied with nutrients at a fixed/variable rate, while an equal flow of the culture liquid containing microorganisms and nutrients is continuously drained from the culture vessel, so that the vessel retains a constant volume of culture at all times. In this manner, a chemostat enables the experimental control of cell growth rates within limits in a well-defined and controlled environment, and the microbial cell external environment remains constant. The control of the growth rate in this manner allows to (i) optimize the production of specific microbial products like ethanol and antibiotics, (ii) optimize the production of L-leucine used for blood sugar and energy levels regulations, the growth and repairment of bones and muscles and healing of wounds, (iii) facilitate the study of nutrient limitation and the microorganisms growth in natural ecological environments such as rivers and lakes, (iv) analyzing the complex interactions between distinct biological populations, (v) waste-water treatment by decomposing harmful substances in waste-water to improve the water quality, etc. \cite{ziv2013use,MAIER201537,xu2022bifurcation,
eliasson2000anaerobic,gray1980production,raatz2018one}. For these reasons, chemostats are of great theoretical and practical value in applied science and industry. 

One of the main challenges in operating a chemostat is the maintenance of the required growth conditions of microorganisms. There are three variables that can be manipulated in a chemostat: the dilution rate (i.e., the feeding rate) and the input concentrations of the substrate and biomass. Assuming that the latter two are given, the flow rate can be controlled and a constant substrate concentration can be maintained. However, it is well known, since the discovery of \citet{douglas1966unsteady}, that the performance of an unsteady-state (dynamically or periodically) operated biochemical reactor is sometimes superior to that obtained under conventional steady-state\footnote{At a steady state, the dilution rate is equal to the specific growth rate of the cell population; thus, the experimenter can force the cells to grow at a desired rate.} operation in the sense that the conversion can be increased by cycling one or more inputs, stirring many research works in that direction for more than half a century; see \cite{butler1985mathematical,abulesz1987periodic,kumar1993periodic,petkovska2010fast,wolkowicz1998n,peng2000global,
wang2016periodic,bayen2018optimal,bayen2020improvement}, and the references therein. An important case of periodic operation is when the steady-state condition is chosen arbitrarily, and the time-averaged substrate concentration is equal to the substrate concentration value during steady-state operation; see \cite{bailey1974periodic,renken1984unsteady}. Another perhaps more interesting class of periodic operations is when the time-averaged substrate concentration is less than that at the steady-state operation; such systems were termed ``overyielding'' systems \cite{caraballo2015dynamics}.

The success of optimizing the performance of a chemostat depends on the availability of a representative mathematical model of the process and the choice of an appropriate numerical and optimization routine. For periodically operated chemostat systems, the process can be described using a finite horizon optimal control (OC) problem, and chemostat performance can be maximized by determining the optimal periodic control under certain conditions \cite{cogan2016optimal,bayen2018optimal}. When the dilution rate is used as the control variable, \citet{bayen2018optimal} showed through Pontryagin Maximum Principle that the best periodic control for a chemostat exhibiting an overyielding state is ``bang-bang with even switching times.'' However, the derived mathematical model was solved numerically by using BOCOP\footnote{BOCOP is an open source toolbox for OC; see \url{https://www.bocop.org/}.}, which approximates the OC problem (OCP) by a finite dimensional nonlinear programming problem (NLP) using a time discretization. The NLP problem is then solved by IPOPT\footnote{IPOPT is an open source software package for large-scale nonlinear optimization; see \url{https://coin-or.github.io/Ipopt/}.} software. It is important to note that IPOPT is generally not suitable for solving the OCP under study for at least two reasons: (i) IPOPT is a smooth optimization solver that is designed to exploit the first- and second-derivative information if provided and approximates them using quasi-Newton methods, specifically using a BFGS update, if this derivative information is not provided. For OCPs where some/all of the state or control variables are discontinuous, the objective function of the reduced NLP becomes generally discontinuous, and the gradient information obtained by the solver becomes generally null or highly inaccurate in the neighborhoods of the discontinuities, causing poor approximations to the optimal solutions of the OCP. In particular, the imprecise gradient information gathered during the numerical optimization procedure does not provide useful information to the IPOPT solver, or any smooth optimization solver in general, which renders its implementation less rewarding and inefficient. Therefore, the use of IPOPT and BOCOP is generally limited for building OC policies, where OCPs often include discontinuous state or control variables. It is interesting to note that BOCOP was later deemed less accurate than other traditional optimization solvers by the same authors \cite{bayen2020improvement}. In addition, some recent research works have further acknowledged the inconsistency of the IPOPT solver for solving OCPs exhibiting discontinuous solutions; cf. \cite{bouchet2021derivative,mork2022nonlinear}. (ii) For IPOPT to be effective compared with other small/medium-scale optimization solvers, one needs to discretize the OCP at a large time mesh grid because IPOPT was mainly written for large-scale problems with up to million variables and constraints; for such large problems, it is assumed that the derivative matrices are sparse. However, the OCP under study can be converted into a small/medium-scale NLP, which can be solved much faster using standard small/medium-scale NLP solvers, and the gathered control data after this stage can be quickly analyzed using a novel smart algorithm that can rapidly recover the control variables with excellent accuracy. The corresponding state variables can be constructed accurately later with rapid convergence by solving the discrete dynamical system equations at a certain set of collocation points, as will be presented later in Section \ref{sec:SRMay221}.

The above arguments motivated us to explore new optimal periodic control policies to improve chemostat performance in light of a more accurate, robust, and efficient numerical method. In our work, we consider the chemostat of a single-reaction model in which the growth rate\footnote{A clear exposition of a broad class of growth rate functions exhibiting a wide range of outlooks on the subject can be found in the book of \citet{moser2012bioprocess}.} of the microorganisms is given by a Contois expression, which was introduced by \citet{contois1959kinetics} to model the growth of Aerobacter aerogenes and was often used later to model the growth of biomass in wastewater containing biodegradable organic materials \cite{alqahtani2011analysis,bayen2020improvement}. We sought to numerically determine the optimal periodic dilution rate associated with a periodically varying substrate concentration that can optimize the performance of an overyielding chemostat in terms of the time-averaged substrate concentration over a given finite horizon. To this end, we explored the possibility of applying two attractive classes of methods in a unified composite approach. The first class is Fourier integral pseudospectral (FIPS) methods\footnote{FIPS methods are aka ``Fourier pseudospectral (FPS) integration methods'' and can also be termed ``nodal Fourier integration methods.''} where the periodic solutions are represented in terms of grid point values using interpolants. Integral pseudospectral (IPS) methods are robust variants of the popular pseudospectral (PS) methods, in which an initial step of reformulating the dynamical system equations in their integral form is required before the collocation phase starts. Integral reformulation can be performed by either direct integration of the dynamical system equations if they have constant coefficients, or by approximating the solution's highest-order derivative involved in the problem by a nodal finite series in terms of its grid point values, and then solving for those grid point values before successively integrating back in a stable manner to obtain the desired solution grid point values \cite{ElgindyHareth2022a}. Some of the advantages of FIPS methods inherited from FPS methods include: (i) their ability to furnish exponential convergence rates when the problem exhibits sufficiently smooth solutions, (ii) the nodal representation of the solution is extremely useful because its values are immediately available at the collocation points once the full discretization is implemented, whereas Fourier series integration (FSI) methods\footnote{FSI methods can also be termed ``modal Fourier integration methods.''} require a further step of computing the modal approximation after calculating the Fourier coefficients, and (iii) although FPS integration (FPSI) methods often introduce an aliasing error that does not exist in FSI methods, we prefer the former methods because of the Discrete Fourier Transform (DFT) pair, which allows us to rapidly transform from the set of function values at equally spaced points to the set of interpolation coefficients using Fast Fourier Transform (FFT), and vice versa, instead of computing $N$ integrals to determine the Fourier series coefficients. A bonus advantage of FIPS methods over usual FPS methods is manifested in the integral reformulation strategy imposed by the former, which avoids the degradation of precision often caused by numerical differentiation processes \cite{elgindy2020high,dahy2021high}. 

For problems with non-smooth solutions, the IPS methods lose their exponential convergence virtue, and the class of $h$-IPS methods comes into play as a better choice because of their ability to recover the discontinuous/non-smooth solutions with high accuracy via the decomposition of the solution interval into smaller mesh intervals or elements ($h$-refinement), and approximating the restricted solution on each element with a finite, nodal expansion series in terms of the solution grid point values by means of interpolation \cite{Elgindy20172}. For discontinuous solutions with unknown discontinuities, adaptive strategies are desirable to determine which elements need to be refined in advance before the collocation process occurs. This prevents excessive and blind divisions of the solution domain using composite-grid discretizations, which are computationally expensive, time consuming, and often produce poor levels of accuracy compared to the former adaptive strategy \cite{elgindy2020high}. Because the controller of the problem under study is discontinuous with unknown jump discontinuities, as proven in \cite{bayen2018optimal}, an FIPS approximation of the controller suffers from the Gibbs phenomenon, which appears in the form of over- and undershoots around the jump discontinuities. While Gibbs phenomenon is generally considered a demon that needs to be cast out, we shall demonstrate later that it is rather ``a blessing,'' in view of the current work, that can be constructively used to set up a robust adaptive algorithm. In particular, the over- and undershoots developed near a discontinuity in the event of a Gibbs phenomenon provide an excellent means of detecting one. This adaptive scheme, together with the $h$-IPS method, can be combined with the FIPS method to solve the OCP accurately and efficiently.

In light of the above arguments, we propose a novel composite class of IPS methods and adaptive $h$-IPS methods composed through a predictor-corrector algorithm. In the prediction step, the composite method applies a direct IPS method, in which the OCP in integral form is initially collocated in the Fourier physical space. In the correction step, the composite method carries out an adaptive $h$-IPS method in which the integrated dynamical system equation is collocated in the shifted Gegenbauer (SG) physical space after splitting the time domain into smaller elements while allowing the SG interpolant degree to increase on each element, as desired. For these reasons we coin the proposed method with the name ``Fourier-Gegenbauer-based predictor-corrector composite IPS and adaptive $h$-IPS method,'' which we prefer to abbreviate simply by the ``Fourier-Gegenbauer-based predictor-corrector (FG-PC) method.'' To the best of our knowledge, this study introduces the first unified procedure combining the IPS and adaptive $h$-IPS methods to derive optimal periodic control policies for periodically operated biochemical reactors. The proposed FG-PC method is not only unique in its kind, but it can also produce OC policies that are much more efficient than those obtained by traditional methods. For example, we show later in Section \ref{sec:SRMay221} that the FG-PC method can cleverly set up an optimal periodic control policy that can optimize the performance of an overyielding chemostat by reducing the required time-averaged substrate concentration over a given finite horizon by approximately 57\% of the amount recorded in \cite{bayen2018optimal} over the same time period and under the same parameter settings. Furthermore, the FG-PC method can convert the OCP into an NLP of small/medium scale in the initial stage, which can be treated rapidly using standard small/medium scale NLP solvers, and the noisy data gathered from this stage are filtered out using a smart algorithm that is both accurate and efficient, thus alleviating the main drawbacks of BOCOP software.

The remainder of this paper is organized as follows. In the next section, we provide some preliminary notations to be used in this study. In Section \ref{sec:PS1}, the mathematical model under study is presented. In Section \ref{sec:NDOnew1}, we present the numerical discretization operators used to discretize the mathematical model. In particular, Section \ref{sec:FPIMIRF1} presents new formulas for the construction of FPSI matrices that are superior to those obtained earlier by \citet{elgindy2019high} in terms of accuracy, speed, and computational complexity. Moreover, in Section \ref{subsec:BSGQMay20221}, we discuss how to accurately evaluate definite integrals of reconstructed piecewise analytic functions from FPS data and derive new formulas for constructing barycentric SG quadratures. The proposed FG-PC method is described in Section \ref{sec:FPCFPI1}. The simulation results are presented in Section \ref{sec:SRMay221} followed by conclusions, remarks, and future work in Section \ref{sec:conc}. In \ref{subsec:CRFNSPF1}, we study the errors and convergence rates of the truncated Fourier series, interpolation operators, and integration operators for non-smooth and generally $T$-periodic functions. In \ref{sec:BOFIAAJD1}, we study the behavior of the Fourier interpolants at jump discontinuities. \ref{subsubsub:ECABSGQ1} presents a study on the barycentric SG quadrature errors and convergence. \ref{sec:DTD} and \ref{sec:DTD2} present a discussion and practical prescription of a novel edge detection strategy for detecting jump discontinuities and reconstructing a piecewise analytic function with high accuracy from FPS data. Two computational algorithms for the fast, accurate, and economic construction of FPSI matrices and the reconstruction of an approximate piecewise analytic function from the FPS data are described in \ref{app:Alg1}. The computational complexity and speed of constructing the FPSI matrices are investigated in \ref{subsec:CCAA1}. The efficient and stable computation of the SG matrices necessary for constructing SG quadratures is discussed in \ref{sss:ECSGM1}. 

\section{Preliminary Notations}
\label{sec:PN}
The following notations are used throughout this paper; most of them are new to the mathematical community, but we hope the reader will become more familiar with them after carefully reading this section.\\

\noindent\textbf{Logical Symbols.} $\forall, \foralla, \foralle$, and $\foralls$ stand for the phrases ``for all'', ``for any'', ``for each'',  and ``for some'', respectively. The notations $f \in \Def{\F{\Omega}}$ and $f \in C^{k}(\F{\Omega})$ mean $f$ is defined on the set $\F{\Omega}$ and $f$ has $k$ continuous derivatives on the set $\F{\Omega}\,\foralla$ function $f$, in respective order.\\[0.5em]
\textbf{Set and List Notations.}  $\Int{\F{\Omega}}$ stands for the interior of a set $\F{\Omega}$. The symbols $\MBC, \MBF, \MBZ, \canczer{\MBZ}, \MBZP, \MBZzerP, \MBZeP, \MBZzereP, \MBZOP, \MBR, \canczer{\MBR}$, and $\MBRzerP$ denote the sets of all complex-valued functions, all real-valued functions, integers, non-zero integers, positive integers, non-negative integers, positive even integers, non-negative even integers, positive odd integers, real numbers, non-zero real numbers, and non-negative real numbers, respectively. The notations $i:j:k$ or $i(j)k$ indicate a list of numbers from $i$ to $k$ with increment $j$ between numbers, unless the increment equals one where we use the simplified notation $i:k$. For example, $0:0.5:2$ simply means the list of numbers $0, 0.5, 1, 1.5$, and $2$, while $0:2$ means $0, 1$, and $2$. The set of any numbers $y_1, y_2, \ldots, y_n$ is represented by $\{y_{1:n}\}$. The list of any sets $\F{\Omega}_1, \F{\Omega}_2\ldots, \F{\Omega}_n$ is represented by $\F{\Omega}_{1:n}\,\foralla n \in \MBZP$. We define $\MBJ_n = \{0:n-1\}$ and $\MBJ_n^+ = \MBJ_n \cup \{n\}\,\foralla n \in \MBZP$; moreover, $\MB{K}_N = \{-N/2:N/2\}, \MB{K}'_N = \MB{K}\backslash\{N/2\}$, and $\MB{K}''_N = \MB{K}\backslash\{\pm N/2\} \foralla N \in \MBZeP$. $\MBT_T$ is the space of $T$-periodic, univariate functions $\foralla T \in \MBR^+$. Also, $\MBS_n = \left\{x_{n,0:n-1}\right\}$ and $\MBS_n^+ = \MBS_n \cup \{x_{n,n}\}$ are the sets of $n$- and $(n+1)$- equally-spaced points such that $x_{n,j} = T j/n\, \forall j \in \MBJ_n$ and $j \in \MBJ_n^+$, respectively.\\[0.5em]
\textbf{Function Notations.} For convenience, we shall denote $g(x_{N,n})$ by $g_n \foralla g \in \MBC$, unless stated otherwise. Moreover, if a set $\F{\Omega}$ is partitioned into a number of subsets $\F{\Gamma}_{1:n}\,\foralls n \in \MBZP$, the notation ${}_kg$ indicates the restriction of $g$ to ${\F{\Gamma}_k}$.\\[0.5em]
\textbf{Vector Notations.} We shall use the shorthand notation $[z]_n$ to denote a row vector containing $n$ copies of $z\,\foralla z \in \MBC, n \in \MBZP$. Moreover, $c^{0:N-1}, {\bm{x}_N}\;(\text{or }x_{N,0:N-1}^t), {\bm{x}_N^+}\; (\text{or }x_{N,0:N}^t)$ stand for the $N$th-dimensional row vector $[c^0, c^1, \ldots, c^{N-1}]\,\foralla c \in \canczer{\MBR}$ and the column vectors $[x_{N,0}, x_{N,1}, \ldots, x_{N,N-1}]^t$ and $[x_{N,0}, x_{N,1}, \ldots, x_{N,N}]^t$, respectively. $g_{0:N-1}$, $g^t_{0:N-1}$, $g^{(0:n)}$, and $f(g_{0:N-1})$ denote the column vector $[g_0, g_1, \ldots, g_{N-1}]^t$, the transpose vector $[g_0, g_1, \ldots, g_{N-1}]$, the column vector of derivatives $[g, g', \ldots, g^{(n)}]^t\,\forall n \in \MBZzerP$, and the column vector of composite function values $[f(g_0), f(g_1), \ldots$, $f(g_{N-1})]^t \foralla$ $f \in \MBC$ in respective order. Moreover, $\ind g_{0:N-1}, \indmax g_{0:N-1}$, and $\indmin g_{0:N-1}$ denote the indices vector of nonzero values and maximum- and minimum-values of $g_{0:N-1}$, respectively.\\[0.5em] 
\textbf{Interval Notations.} The shorthand notation $[c, y_i]_{i = 0:n-1}$ stands for the collection of intervals $[c, y_0], \ldots, [c, y_{n-1}]\,\foralla c$, $\{y_{0:n-1}\} \subset \MBR: y_j > c\,\forall j \in \MBJ_n$. The specific interval $[0, c]$ is denoted by $\F{\Omega}_c\,\forall c > 0$. For example, $[0, x_{N,n}]$ is denoted by ${\F{\Omega}_{x_{N,n}}}$; moreover, ${\F{\Omega}_{x_{N,0:N-1}}}$ stands for the list of intervals ${\F{\Omega}_{x_{N,0}}}, {\F{\Omega}_{x_{N,1}}}, \ldots, {\F{\Omega}_{x_{N,N-1}}}$. $|\F{\Omega}|$ gives the length of an interval $\F{\Omega}$.\\[0.5em] 
\textbf{Integral Notations.} By closely following the convention for writing definite integrals introduced in \cite{elgindy2019high}, we denote $\int_0^{{x_{N,l}}} {h(x)\,dx}$ by $\C{I}_{{x_{N,l}}}^{(x)}h \foralla$ integrable $h \in \MBC$; moreover, by $\C{I}_{a,b}^{(x)}f$ we mean $\int_a^{b} {f(x)\,dx} \foralla a, b \in \MBR$ and an integrable function $f$. If the integrand functions $h$ and $f$ are to be evaluated at any other expression of $x$, say $u(x)$, we express $\int_0^{{x_{N,l}}} {h(u(x))\,dx}$ and $\int_a^{b} {f(u(x))\,dx}$ with a stroke through the square brackets as $\C{I}_{{x_{N,l}}}^{(x)}h\cancbra{u(x)}$ and $\C{I}_{a,b}^{(x)}f\cancbra{u(x)}$, respectively. We adopt the notation $\C{I}_{{\bm{x}_{N}}}^{(x)}h$ to denote the $N$th-dimensional column vector $\left[ {\C{I}_{{x_{N,0}}}^{(x)}h,\C{I}_{{x_{N,1}}}^{(x)}h, \ldots ,\C{I}_{{x_{N,N - 1}}}^{(x)}h} \right]^t$. The notation $\C{I}_{\F{\Omega}}^{(x)} h$ simply means the definite integral $\C{I}_{a,b}^{(x)} h\,\foralla \F{\Omega} = [a, b]$. Moreover, $\C{I}_{\F{\Omega}_{1:n}}^{(x)} h$ stands for $[\C{I}_{\F{\Omega}_1}^{(x)} h, \ldots, \C{I}_{\F{\Omega}_n}^{(x)} h]^t\,\foralla$ collection of intervals $\F{\Omega}_{1:n}$.\\[0.5em] 
\textbf{Matrix Notations.} $\F{O}_n, \F{1}_n$, and $\F{I}_n$ stand for the zero, all ones, and the identity matrices of size $n$. By $[\F{A}\,;\F{B}]$ we mean the usual vertical matrix concatenation of $\F{A}$ and $\F{B}\,\foralla$ two matrices $\F{A}$ and $\F{B}$ having the same number of columns. For a two-dimensional matrix $\F{C}$, the notation $\F{C}_{\hcancel{0}}$ stands for the matrix obtained by deleting the zeroth-row of $\F{C}$. Moreover, $\F{C}_n$ denotes a row vector whose elements are the $n$th-row elements of $\F{C}$, except when $\F{C}_n = \F{O}_n, \F{1}_n$, or $\F{I}_n$, where it denotes the size of the matrix. $\F{C}_{n,m}$ indicates that $\F{C}$ is a rectangular matrix of size $n \times m$. For convenience, a vector is represented in print by a bold italicized symbol while a two-dimensional matrix is represented by a bold symbol, except for a row vector whose elements form a certain row of a matrix where we represent it in bold symbol as stated earlier. For example, $\bmone_n$ and $\bmzer_n$ denote the $n$-dimensional all ones- and zeros- column vectors, while $\F{1}_n$ and $\F{O}_n$ denote the all ones- and zeros- matrices of size $n$, respectively.\\[0.5em] 
\textbf{Algorithmic Notations.} For algorithmic purpose, we adopt the notation ``$==$'' such that $\F{A}==\F{B}$ gives a logical array with elements set to logical $1$ where arrays $\F{A}$ and $\F{B}$ are equal; otherwise, the element is logical $0\,\foralla$ arrays $\F{A}$ and $\F{B}$ of the same size.

\section{Problem Statement}
\label{sec:PS1}
Consider the classical biochemical reaction kinetics model
\begin{subequations}
\begin{align}
\dot s &=  - \mu x + ({s_{{\text{in}}}} - s)u,\label{eq:BRKM1}\\
\dot x &= (\mu  - u)x,\label{eq:BRKM2}
\end{align}
governed by the Contois growth model 
\begin{equation}\label{eq:BRKM3}
\mu  = \frac{{{\mu _{\max }}s}}{{{k_s}x + s}},
\end{equation}
\end{subequations}
where $s(t), x(t)$, and $u(t)$ are the substrate concentration, the microorganism concentration, and the dilution rate (aka the feeding rate) at any time $t \in \F{\Omega}_T\,\foralls T > 0$, respectively, $\mu(s,x)$ is the specific growth rate of the microorganisms, $\mu_{\max}$ is the maximum specific growth rate, and $k_s$ is the Contois saturation constant. Assume that (i) $\Sin > 0$ is the input substrate concentration, (ii) $x(0) > 0$, (iii) $k_s > 1$, (iv) $u \in [u_{\min}, u_{\max}]: u_{\min}, u_{\max} \ge 0$, where $u_{\min}$ and $u_{\max}$ are the minimum and maximum dilution rates allowed, respectively, and (v) the substrate quantity $\bs \in [0, \Sin]$ brought by a $T$-periodic dilution rate is equal to the quantity brought by some constant dilution rate $\bu \in [u_{\min}, u_{\max}]: \bu < \mu_{\max}$. Under these assumptions, the goal is to find the optimal $T$-periodic waveforms, $x^*, s^*$, and $u^*$, which satisfy the dynamical system equations \eqref{eq:BRKM1} and \eqref{eq:BRKM2} of the chemostat model governed by the Contois growth model \eqref{eq:BRKM3} and minimize the time-averaged substrate concentration. Such a problem can be described by the following finite horizon OCP: For a given time period $T > 0$, find the control variable $u$ on the time interval $[0, T]$ that minimizes the performance index
\begin{subequations}
\begin{equation}\label{eq:OC1}
J(u) = \frac{1}{T} \C{I}_{T}^{(t)} {s}
\end{equation}
subject to Eqs. \eqref{eq:BRKM1}-\eqref{eq:BRKM3}, 
\begin{gather}
0 \le s \le {s_{{\text{in}}}},\quad u \in \C{U},\quad \frac{1}{T} \C{I}_{T}^{(t)} {u}  = \bu,\label{eq:EC1}\\
s(0) = s(T) = \bs,\quad \text{and }\label{eq:EC2}\\
x(0) = x(T),
\end{gather}
\end{subequations}
and under Assumptions (i)-(v), where $s$ and $x$ are the state variables, and 
\[\C{U} = \{u: \MBRzerP \to [u_{\min}, u_{\max}]\text{ s.t. }u\text{ is measurable and }T\text{-periodic}\}.\]
To reduce the dimensionality of the OCP, \citet{bayen2018optimal} argued that all trajectories of Eqs. \eqref{eq:BRKM1} and \eqref{eq:BRKM2} converge asymptotically to the invariant set $s + x =\Sin$, so we can reduce the coupled system of dynamic equations \eqref{eq:BRKM1} and \eqref{eq:BRKM2} into the single differential equation
\begin{equation}\label{eq:SDE1}
\dot s = \psi,
\end{equation}
where $\psi = (u - \nu )({s_{{\text{in}}}} - s)$ is the state derivative variable and $\nu  = \displaystyle{\frac{{{\mu _{\max }}s}}{{{k_s}({s_{{\text{in}}}} - s) + s}}}$. Moreover, the non-trivial equilibrium solution $\bs$ of the differential equation ${\left. {\dot s} \right|_{u = \bu}} = 0$ exists and is given by 
\begin{equation}\label{eq:bsbu1}
\bs = \displaystyle{\frac{{\bu{k_s}{s_{{\text{in}}}}}}{{\bu({k_s} - 1) + {\mu _{\max }}}}}.
\end{equation}
The system under Assumptions (i)--(v) exhibits an overyielding state in the sense that there exists $u: J(u) < J(\bu) = \bs$. The goal now is to find the optimal $T$-periodic waveforms, $s^*$ and $u^*$, which minimize the averaged substrate concentration \eqref{eq:OC1} and satisfy the differential Eq. \eqref{eq:SDE1} together with Conditions \eqref{eq:EC1} and \eqref{eq:EC2}. We refer to this problem by Problem $\C{P}$. If we integrate both sides of Eq. \eqref{eq:SDE1} over the time interval $\F{\Omega}_t \foralls t \in {\F{\Omega}_T}\backslash\{0\}$, we transform the OCP into its integral form where the same performance index $J$ is minimized subject to the integral equation
\begin{equation}\label{eq:IDS1}
s(t) = \bs + \C{I}_t^{(x)} {\psi},
\end{equation}
and Conditions \eqref{eq:EC1} and \eqref{eq:EC2}. We refer to this integral form of Problem $\C{P}$ by Problem $\C{IP}$. Although the solutions of Problems $\C{P}$ and $\C{IP}$ are mathematically equivalent, they are not necessarily numerically equivalent in floating-point arithmetic. In particular, while the numerical discretization of Problem $\C{P}$ entails the use of numerical differentiation operators known to be ill-conditioned as they could potentially lead to serious round-off errors, the numerical discretization of Problem $\C{IP}$ admits the use of numerical integration operators widely popular for being `well-conditioned operators,' and `their well-conditioning is essentially unaffected for increasing number of points'; see \cite{Elgindy20171,Elgindy2020distributed} and the references therein. 

\section{FPSI Matrices and Barycentric SG Quadratures}
\label{sec:NDOnew1}
In this section, we present some novel numerical tools required to discretize Problem $\C{IP}$. In particular, the first set of numerical tools is used to construct novel FPSI matrices that can produce more accuracy and speed in approximating the integrations of periodic functions while reducing the computational complexity required compared to the recent formulas of \citet{elgindy2019high}. We also analyze the practical difficulty with the Fourier collocation of Problem $\C{IP}$ at the mesh points set $\MBS_N$, which is the well recovery of the $N$ definite integrals of the nonlinear state derivative variable $\psi$ over the intervals $\F{\Omega}_{x_{N,n}} \forall n \in \MBJ_N$. A similar difficulty is encountered when evaluating the definite integral of the approximate substrate concentration $\tilde s$ over ${\F{\Omega}_T}$. Note that $\psi$ is a $T$-periodic function, because it is a composition of $T$-periodic functions $s$ and $u$. Thus, it is possible to estimate the required integrals using the FPSQs. However, $\psi$ is generally a discontinuous function due to the presence of the bang–bang controller $u$, so the FPSQ error Euclidean-norm of $\psi$ decays like $O\left(N^{-1/2}\right)$ as we shall discuss later in \ref{subsec:CRFNSPF1}, assuming that both $u$ and $s$ are computed using exact arithmetic. In practice, the expected poor convergence rate of FPSQ in computing the required integrals of $\psi$ after recovering the approximate discontinuous controller through Algorithm \ref{alg:2}, which is presented later in \ref{app:Alg1}, motivates us to seek an alternative strategy to evaluate the sought integrals efficiently and with higher accuracy. In particular, to refine the required integral approximations and improve the convergence rate, an alternative approach in lieu of using FPSQs, after utilizing them to determine the approximate discontinuous controller, is to numerically piecewise integrate $\psi$ using Gauss-type quadratures. This is where the second set of our numerical tools kicks in. In particular, because the optimal controller switches between two predefined states at two unspecified time instances in $\F{\Omega}_T$, the idea is to estimate these two time instances and then partition the time interval $\F{\Omega}_T$ into three subintervals determined by the estimated time instances over which the restricted $\psi$ on each subinterval is smooth. The application of the Gauss quadrature over each subinterval is optimal in this case because it requires the smallest number of points to calculate the exact integration of the highest possible order polynomial. In fact, an $n$-point Gaussian quadrature rule is exact for polynomials of degree at most $2n - 1$. 

\subsection{FPSI Matrices in Reduced Form}
\label{sec:FPIMIRF1}
Using the Fourier quadrature rule 
\[{Q_F}(f) = \frac{T}{N}\sum\limits_{j = 0}^{N - 1} {{f_j}},\quad \forall f \in \MBT_T,\]
we can define the following discrete inner product
\[{(u,v)_N} = \frac{T}{N}\sum\limits_{j = 0}^{N - 1} {{u_j}v_j^*},\]
$\foralla u, v \in \MBC$, where $v_j^*$ is the complex conjugate of $v_j$. Now, let ${I_N}f$ be the $N/2$-degree, $T$-periodic Fourier interpolant that matches $f$ at the set of nodes $\MBS_N$ so that
\begin{equation}\label{eq:FI1}
{I_N}f(x) = \sum\limits_{k =  - N/2}^{N/2} {\frac{{{{\tilde f}_k}}}{{{c_k}}}{e^{i{\omega _k}x}}},
\end{equation}
where ${\omega _{\alpha}} = \displaystyle{\frac{{2\pi \alpha}}{T}}\,\forall \alpha \in \MBR$,
\begin{empheq}[left={c_k = }\empheqbiglbrace]{align*}
  1, &\quad k \in \MB{K}''_N,\\
  2, &\quad k =  \pm \frac{N}{2},
\end{empheq}
and ${\tilde f_k}$ is the discrete Fourier interpolation coefficient given by
\[{\tilde f_k} = \frac{1}{T}{\left( {f,{e^{i \omega_k x}}} \right)_N} = \frac{1}{N}\sum\limits_{j = 0}^{N - 1} {{f_j}{e^{ - i \omega_k {x_{N,j}}}}},\quad \forall k \in \MB{K}_N.\]
Since
\begin{equation}\label{eq:FCI1}
{\tilde f_{k \pm N}} = \frac{1}{N}\sum\limits_{j = 0}^{N - 1} {{f_j}{e^{ - i{\omega _{k \pm N}}{x_{N,j}}}}}  = \frac{1}{N}\sum\limits_{j = 0}^{N - 1} {{f_j}{e^{ - i{\omega _k}{x_{N,j}}}}{e^{ \mp i{\omega _N}{x_{N,j}}}}}  = \frac{1}{N}\sum\limits_{j = 0}^{N - 1} {{f_j}{e^{ - i{\omega _k}{x_{N,j}}}}{e^{ \mp 2\pi ij}}}  = \frac{1}{N}\sum\limits_{j = 0}^{N - 1} {{f_j}{e^{ - i{\omega _k}{x_{N,j}}}}}  = {\tilde f_k},
\end{equation}
then $\tilde f_{N/2} = \tilde f_{-N/2}$. Therefore, we can rewrite Eq. \eqref{eq:FI1} in the following reduced form
\begin{equation}\label{eq:FI1nn1}
{I_N}f(x) = \sumd\sum\limits_{\left| k \right| \le N/2} {{\tilde f_k} {e^{i{\omega _k}x}}},
\end{equation}
where the primed sigma denotes a summation in which the last term is omitted. We can now define the DFT pair by
\begin{subequations}
\begin{empheq}[left=\empheqbiglbrace]{alignat=2}
  {{\tilde f}_k} &= \frac{1}{N}\sum\limits_{j = 0}^{N - 1} {{f_j}{e^{ - i{\omega _k}{x_{N,j}}}}}  = \frac{1}{N}\sum\limits_{j = 0}^{N - 1} {{f_j}{e^{ - i{{\hat \omega }_{jk}}}}}, &&\quad k \in \MB{K}'_N,\label{DFP1}\\
  {f_j} &= \sumd\sum\limits_{\left| k \right| \le N/2} {{{\tilde f}_k}{e^{i{\omega _k}{x_{N,j}}}}}  = \sumd\sum\limits_{\left| k \right| \le N/2} {{{\tilde f}_k}{e^{i{{\hat \omega }_{jk}}}}}, &&\quad \forall j \in \MBJ_N,\label{DFP2}
\end{empheq}
\end{subequations}
where ${{\hat \omega }_k} = 2\pi k/N\;\forall k$. Substituting Eq. \eqref{DFP1} into Eq. \eqref{eq:FI1nn1}, and then swapping the order of the summations, express the interpolant in the equivalent Lagrange form
\begin{equation}\label{eq:eqLF1}
{I_N}f(x) = \sum\limits_{j = 0}^{N - 1} {{f_j}{\C{F}_j}(x)},
\end{equation}
where ${\C{F}_j}(x)$ is the trigonometric Lagrange interpolating polynomial given by
\[{\C{F}_j}(x) = \frac{1}{N}\sumd\sum\limits_{\left| k \right| \le N/2} {{e^{i{\omega _k}(x - {x_{N,j}})}} = {\left[ {\frac{1}{N}\sin \left( {\frac{{\pi N}}{T}\left( {x - {x_{N,j}}} \right)} \right)\cot \left( {\frac{\pi }{T}\left( {x - {x_{N,j}}} \right)} \right)} \right]_{x \ne {x_{N,j}}}},\quad j \in \MBJ_N};\]
see \cite{elgindy2019high}. Since $\C{F}_j(x_{N,l}) = \delta_{j,l}\,\forall j,l \in \MBJ_N$, where $\delta_{j,l}$ is the kronecker delta function of variables $j$ and $l$, one can easily write the vector of interpolant values at the grid points set $\MBS_N$ using the writing conventions introduced in Section \ref{sec:PN} as $\left( {{I_N}f} \right)_{0:N - 1} = f_{0:N - 1}$. We can also integrate ${I_N}f$ over the interval $\F{\Omega}_{x_{N,l}}$ through the formula
\begin{equation}\label{eq:AppRedFIM1}
\C{I}_{{x_{N,l}}}^{(x)}({I_N}f) = \sum\limits_{j = 0}^{N - 1} {{f_j}\C{I}_{{x_{N,l}}}^{(x)}{\C{F}_j}}  = \sum\limits_{j = 0}^{N - 1} {{\theta _{l,j}}{f_j}},\quad \forall l \in \MBJ_N,
\end{equation}
where 
\begin{equation}\label{eq:RedFIM1}
{\theta _{l,j}} = \C{I}_{{x_{N,l}}}^{(x)}{\C{F}_j} = \frac{1}{N}\left[ {{x_{N,l}} + \frac{{Ti}}{{2\pi }} \sumd\sum\limits_{\scriptstyle \left| k \right| \le N/2\atop
\scriptstyle k \ne 0} {\frac{1}{k}{e^{ - i{\omega _k}{x_{N,j}}}}\left( {1 - {e^{i{\omega _k}{x_{N,l}}}}} \right)}} \right],\quad l,j \in \MBJ_N,
\end{equation}
are the entries of the first-order square Fourier integration matrix (FIM), $\Fthe$, of size $N$. \citet{elgindy2019high} pointed out further that when the calculation of $\C{I}_{{y_{M,l}}}^{(x)}({I_N}f)$ is needed, $\foralls M$-random set of points $\{ {{y_{M,0:M-1}}}\} \subset (0,T]:{y_{M,l}} \notin {\MBS_N}\forall M \in {\MBZ^ + },l \in \MBJ_M$, one can derive the elements formulas of the associated rectangular FIM, $\F{\hat{\Theta}} = \left(\hat \theta_{l,j}\right), l \in \MBJ_M, j \in \MBJ_N$, by performing the replacement $x_{N,l} \leftarrow y_{M,l}$ in Formulas \eqref{eq:RedFIM1}:
\begin{equation}\label{eq:RedMFIM1}
{\hat{\theta}_{l,j}} = \C{I}_{{y_{M,l}}}^{(x)}{\C{F}_j} = \frac{1}{N}\left[ {{y_{M,l}} + \frac{{Ti}}{{2\pi }}\sumd\sum\limits_{\scriptstyle\left| k \right| \le N/2\atop
\scriptstyle k \ne 0} {\frac{1}{k}{e^{ - i{\omega _k}{x_{N,j}}}}\left( {1 - {e^{i{\omega _k}{y_{M,l}}}}} \right)}} \right],\quad l \in \MBJ_M, j \in \MBJ_N.
\end{equation}
We can rewrite Formulas \eqref{eq:AppRedFIM1} and its variants
\begin{equation}\label{eq:AppRedFIM2}
\C{I}_{{y_{M,l}}}^{(x)}({I_N}f) = \sum\limits_{j = 0}^{N - 1} {{f_j}\C{I}_{{y_{M,l}}}^{(x)}{\C{F}_j}}  = \sum\limits_{j = 0}^{N - 1} {{\hat \theta _{l,j}}{f_j}},\quad l \in \MBJ_M,
\end{equation}
which are consistent with the set $\{ {{y_{M,0:M-1}}}\}$ in matrix notation as 
\begin{subequations}
\begin{align}
{{\C{I}_{{{\bm{x}_N}}}^{(x)}(I_Nf)}} &= \Fthe f_{0:N - 1},\label{eq:Toto13Feb1}\\
{{\C{I}_{{{\bm{y}_M}}}^{(x)}(I_Nf)}} &= \F{\hat{\Theta}} f_{0:N - 1},
\end{align}
\end{subequations}
respectively. In the special case when $y_{M,l} = T$, Formula \eqref{eq:RedMFIM1} reduces to ${\hat{\theta}_{l,j}} = T/N\, \forall j \in \MBJ_N$. For convenience, we denote ${\hat{\theta}_{l,j}}$ by $\theta_{N,j}$ in this particular case and define $\Fthe_N = \frac{T}{N} \bmone_N^t$ so that 
\begin{equation}\label{eq:PCase13Feb221}
{{\C{I}_{{{T}}}^{(x)}(I_Nf)}} = \Fthe_N f_{0:N - 1} = \frac{T}{N} \left(\bmone_N^t f_{0:N - 1}\right).
\end{equation}
The zeroth-rows of $\Fthe$ and $\F{\hat{\Theta}}$ are zeros rows, and the matrix $\Fthe_{\hcancel{0}}$ is ``a row-wise element-twins matrix'' in the sense that each element in each row has exactly one twin element in the same row such that
\begin{align*}
{\theta _{l,j}} &= {\theta _{l,l - j}},\quad \forall l = 1, \ldots ,N - 1,\quad j = 0, \ldots ,\left\lfloor {\frac{{l - 1}}{2}} \right\rfloor ,\\
{\theta _{l,N - j}} &= {\theta _{l,l + j}},\quad \forall l = 1, \ldots ,N - 2,\quad j = 1, \ldots ,\left\lfloor {\frac{{N - l - 1}}{2}} \right\rfloor;
\end{align*}
see (\cite[pp. 379-380]{elgindy2019high}). This distinguished characteristic of $\Fthe_{\hcancel{0}}$ can be exploited to efficiently accelerate the construction of $\Fthe$ through Algorithm \ref{alg:1} in \ref{app:Alg1}; the operations $\odot$ and $\oslash$ in the algorithm refer to the Hadamard product and division, respectively. It is noteworthy to mention that the rectangular matrix $\Fthe_{\hcancel{0}}$ has a $T$-invariant, 2-norm condition number $\C{K}_2 = \left\| {\Fthe_{\hcancel{0}}} \right\|\left\| {\Fthe_{\hcancel{0}}^+} \right\| = O\left(N^{1.5}\right)$, for relatively large values of $N$, as clearly seen in Figure \ref{fig:Fig12_CondK2}, where ${\Fthe_{\hcancel{0}}^+}$ is the Moore-Penrose pseudoinverse of $\Fthe_{\hcancel{0}}$.

\begin{figure}[ht]
     \centering
     \begin{subfigure}[b]{0.5\textwidth}
         \centering
         \includegraphics[width=1\textwidth]{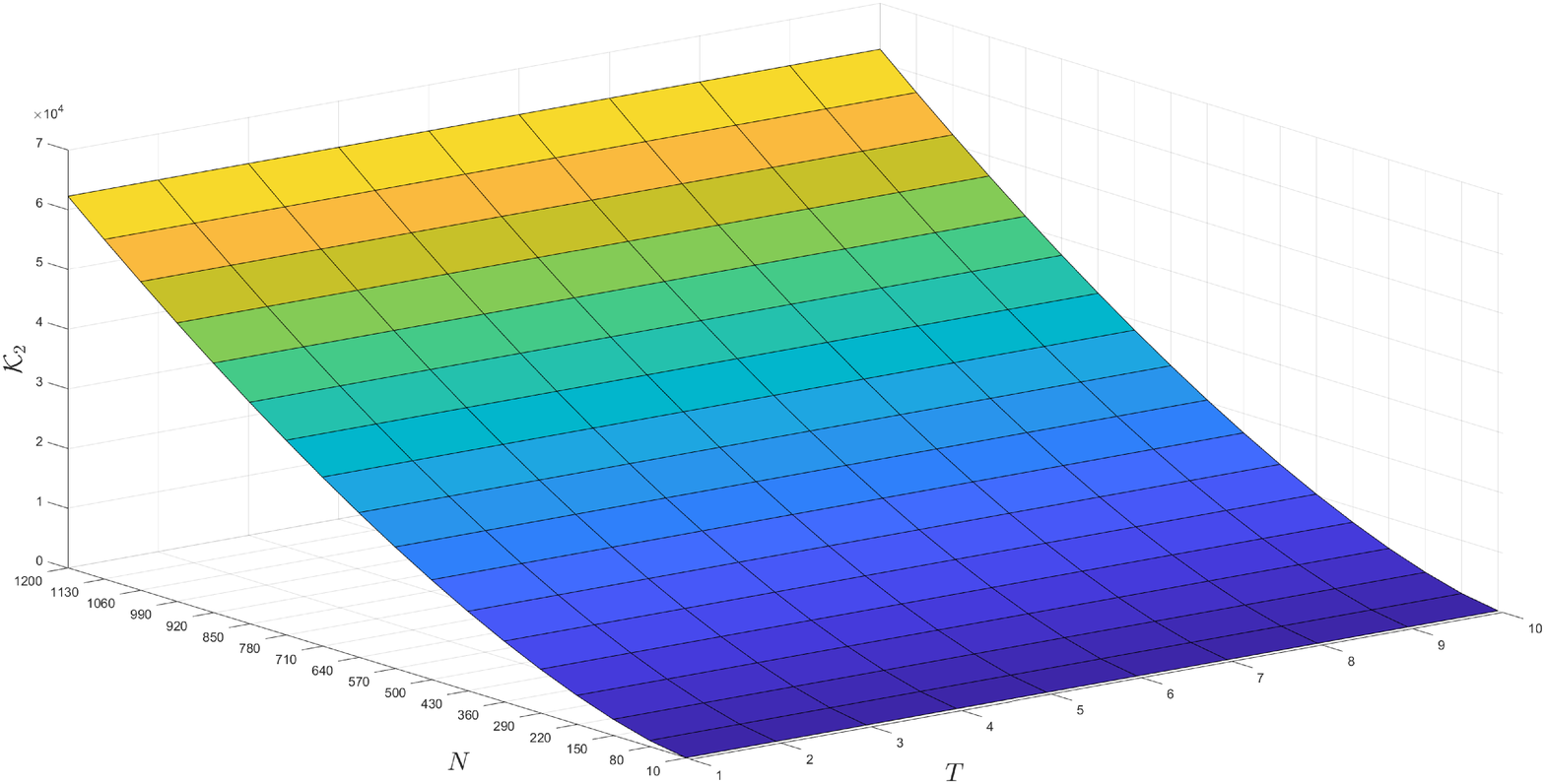}
         \caption{}
         \label{fig:Fig1_CondK2}
     \end{subfigure}
     \begin{subfigure}[b]{0.4\textwidth}
         \centering
         \includegraphics[width=\textwidth]{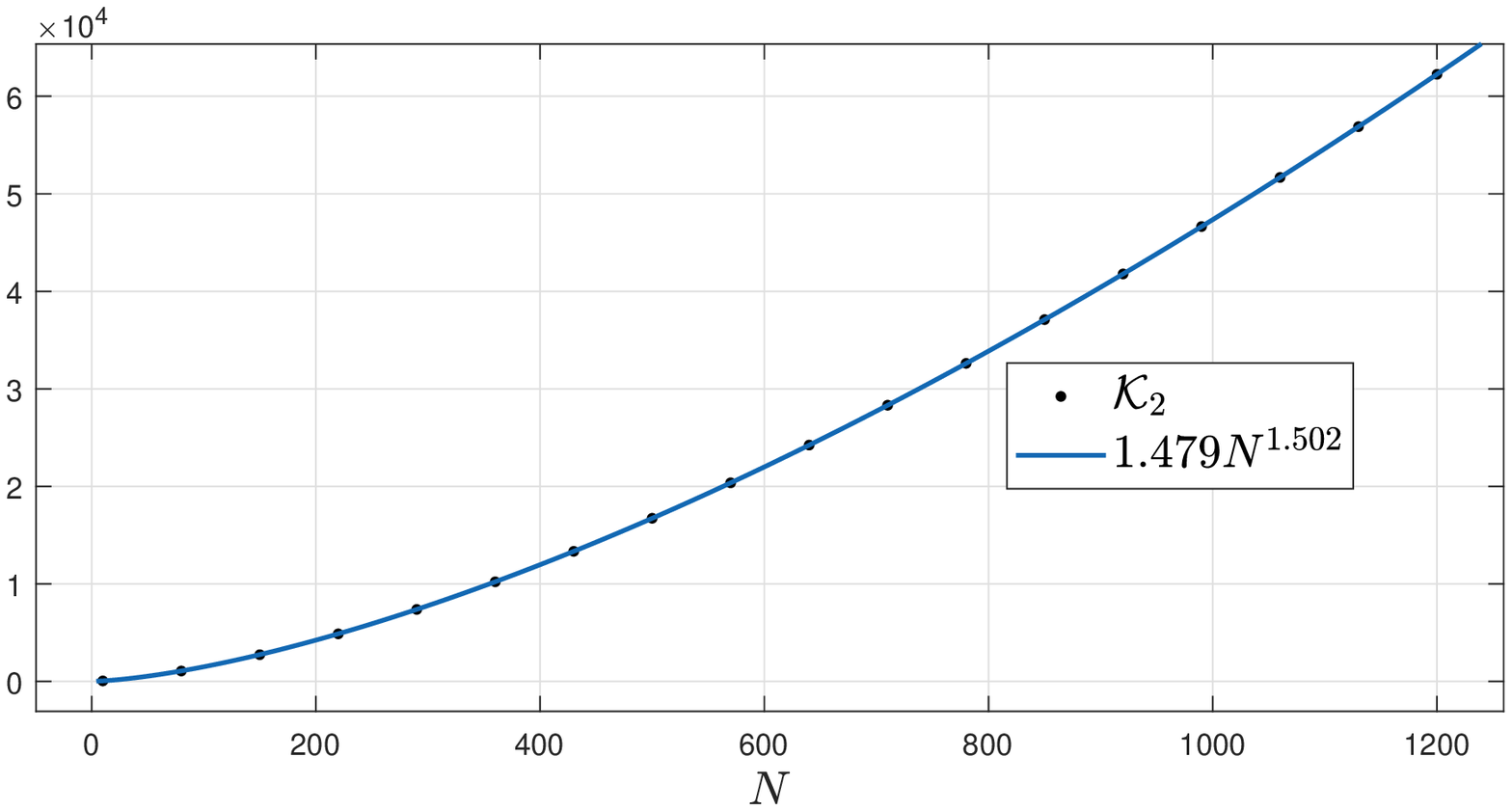}
		 \caption{}
         \label{fig:Fig2_CondK2}
     \end{subfigure}
     \caption{Figure (a): Surface plot of $\C{K}_2$, formed by joining adjacent point values of $\C{K}_2$ with straight lines, above a rectangular grid in the $TN$-plane generated using $T = 1:10$ and $N = 10(70)1200$. Figure (b): Plot of $\C{K}_2$ and its power function least-squares fit of the form $a N^b$ against $N = 10(70)1200$. The fitted coefficients have the 95\% confidence bounds $1.476 \le a \le 1.483$ and $1.501 \le b \le 1.502$.}
     \label{fig:Fig12_CondK2}
\end{figure}


\subsection{Barycentric SG Quadratures}
\label{subsec:BSGQMay20221}
Suppose that we collocate Problem $\C{IP}$ using Fourier collocation, construct the Fourier interpolant, estimate the jump discontinuity points $\xi_1$ and $\xi_2$ of the OC $u$ by $\tilde \xi_1$ and $\tilde \xi_2$, and finally establish a piecewise function whose pieces are defined over the three subintervals $\F{\Omega}_{\tilde \xi_1}, [\tilde \xi_1, \tilde \xi_2]$, and $[\tilde \xi_2, T]$. Motivated by the need to successively integrate the composite function $\psi$, we use the barycentric SG-Gauss (SGG) quadratures prompted by the barycentric Gegenbauer-Gauss (GG) quadratures derived by \citet{Elgindy20171}. These numerical operators were constructed using the stable barycentric representation of Lagrange interpolating polynomials and explicit barycentric weights for the GG points, and they are well known for their stability and superior accuracy. To derive the SG quadratures in barycentric form suited to carry out integrals over any partition of $\F{\Omega}_T$ and study their errors, we closely follow the notations and derivations presented in \cite{Elgindy20172}. Let $\MBK_K = \{1, \ldots, K\}$ and consider the partitioning of the time interval $\F{\Omega}_T$ into $K$ mesh intervals $\bm{\Gamma}_{1:K}$ using $K+1$ mesh points $\tau_{0:K}$ distributed along the interval $\F{\Omega}_T$ such that $\F{\Omega}_T = \bigcup\limits_{k = 1}^K {{\mkern 1mu} {\bm{\Gamma}_k}} ,\quad {\bm{\Gamma}_k} = [{\tau _{k - 1}},{\tau _k}],\quad 0 = {\tau _0} < {\tau _1} <  \ldots  < {\tau _K} = T$. Let $\tau_k^+ = (\tau_{k}+\tau_{k-1})/2$ and $\tau_k^- = |\F{\Gamma}_k|/2$ in respective order, and denote the restricted variable $t$ whose values are confined to ${\bm{\Gamma}_k}$ by $t^{(k)}$ such that $t^{(k)} = t: \tau_{k-1} \le t \le \tau_k$. Define $\hat G_{k,j}^{(\alpha )}\left( {{t^{(k)}}} \right) = G_{j}^{(\alpha )}\left( {\left( {t^{(k)}} - \tau_k^+ \right)/\tau_k^-} \right)$ to be the $j$th-degree SG polynomial defined on the partition $\bm{\Gamma}_k \foralle j \in \MBZzerP, k \in \MBK_K$-- henceforth referred to by the $j$th-degree, $k$th element SG polynomial (or simply the $(j,k)$-SG polynomial), where $G_{j}^{(\alpha )}(t)$ is the classical $j$th-degree Gegenbauer polynomial associated with the real parameter $\alpha > -1/2$ and standardized by \citet{Doha199075}; see also \cite[Formula (A.1)]{elgindy2013fast}. The $k$th element SG polynomials $\hat G_{k,0:N_k}^{(\alpha )}\left( {{t^{(k)}}} \right)$ form a complete $L_{w_k^{(\alpha)}}^2\left(\bm{\Gamma}_k\right)$-orthogonal system with respect to the weight function $w_k^{(\alpha )}\left({t^{(k)}}\right) = {\left( {{\tau _k} - {t^{(k)}}} \right)^{\alpha  - 1/2}}{\left( {{t^{(k)}} - {\tau _{k - 1}}} \right)^{\alpha  - 1/2}} \foralla N_k \in \MBZzerP$. An important and convenient property of these polynomials is that they are orthogonal with respect to the weighted inner product
\begin{equation}
	{\left( {\hat G_{k,m}^{(\alpha )},\hat G_{k,n}^{(\alpha )}} \right)_{w_k^{(\alpha )}}} = 
\C{I}_{{\tau _{k - 1}},{{\tau _k}}}^{\left( {{t^{(k)}}} \right)} {\hat G_{k,m}^{(\alpha )} \hat G_{k,n}^{(\alpha )} w_k^{(\alpha )}}  = \left\| {\hat G_{k,n}^{(\alpha )}} \right\|_{w_k^{(\alpha )}}^2{\delta _{m,n}} = \lambda _{k,n}^{(\alpha )}{\delta _{m,n}},\quad \forall m, n \in \MBZzerP,
\end{equation}
where $\delta _{m,n}$ is the Kronecker delta function, $\lambda _{k,n}^{(\alpha )} = {\left(\tau_k^- \right)^{2\alpha }}\lambda _n^{(\alpha )}$ is the normalization factor, and $\lambda _n^{(\alpha )}$ is as defined by (\cite[Eq. (2.6)]{Elgindy20161}); see also \cite{Elgindy20172}. Now, $\foralle k \in \MBK_K$, let $\MBG_{N_k}^{(\alpha),k} = \left\{\hat t_{N_k,l}^{(k),\alpha}\,\forall l \in \MBJ_{N_k}^+\right\}$ be the set of the zeroes of the $\left(N_k+1,k\right)$-SG polynomial, $\hat G_{k,{N_k} + 1}^{(\alpha )}\left( {{t^{(k)}}} \right) \foralls N_k \in \MBZzerP$. If we denote by $\mathbb{P}{_{n}}$, the space of all polynomials of degree at most $n \in \MBZP$, then 
\begin{equation}\label{eq:hihi1}
\C{I}_{\tau_{k-1}, \tau_k}^{\left(t^{(k)}\right)} {\phi \,w_k^{(\alpha )}} = (\tau_k^-)^{2 \alpha} \C{I}_{ - 1, 1}^{(t)} {\phi \cancbra{\tau_k^- t + \tau_k^+}\,{w^{(\alpha )}}} = {\left( \tau_k^- \right)^{2\alpha }}\sum\limits_{j = 0}^{N_k} {\varpi _j^{(\alpha )}\,\phi \left( {\tau_k^- t_{N_k,j}^{(\alpha )} + \tau_k^+} \right)} = \sum\limits_{j = 0}^{N_k} {\varpi _{k,j}^{(\alpha )}\,\phi \left(\hat t_{N_k,j}^{(k),\alpha}\right)},\quad \forall \phi  \in \mathbb{P}{_{2n + 1}},
\end{equation}
using the standard GG quadrature, where $t_{N_k,0:N_k}^{(\alpha )}$ are the zeroes of the classical $(N_k+1)$th-degree Gegenbauer polynomial $G_{{N_k} + 1}^{(\alpha )}(t), \varpi _{0:n_k}^{(\alpha )}$ are the corresponding Christoffel numbers as given by (\cite[Eq. (2.6)]{Elgindy201382}), and $\varpi _{k,0:N_k}^{(\alpha )}$ are the Christoffel numbers corresponding to the SGG set $\MBG_{N_k}^{(\alpha),k}$ and defined by
\begin{equation}
\varpi _{k,l}^{(\alpha )} = (\tau_k^-)^{2 \alpha} \varpi_l^{(\alpha)} = \frac{1}{{\sum\limits_{j = 0}^{N_k} {{{\left( {\lambda _{k,l}^{(\alpha )}} \right)}^{ - 1}}{\mkern 1mu} {{\left( {\hat G_{k,j}^{(\alpha )}\left( {\hat t_{{N_k},l}^{(k), \alpha}} \right)} \right)}^2}} }},\quad \forall l \in \MBJ_{N_k}^+.
\end{equation}
The SG Quadrature Rule \eqref{eq:hihi1} allows us to define the discrete inner product $(\cdot, \cdot)_{k,N_k}$ associated with the SGG interpolation points as follows:
\begin{equation}
	{(u,v)_{k,N_k}} = \sum\limits_{j = 0}^{N_k} {\varpi _{k,j}^{(\alpha )}\,u\left( {\hat t_{N_k,j}^{(k),\alpha}} \right)\,v\left( {\hat t_{N_k,j}^{(k),\alpha}} \right)},\quad \foralla u, v \in \Def{\F{\Gamma}_k}.
\end{equation} 
With this mathematical setting, we can write the SGG interpolant of a restricted, real function $f$ on $\F{\Gamma}_k$ obtained through interpolation at the set $\MBG_{N_k}^{(\alpha),k}$ as 
\begin{equation}\label{eq:Sel1}
{P_{{N_k}}}f\left( {{t^{(k)}}} \right) = \sum\limits_{j = 0}^{{N_k}} {a_j^{(k)}\,\hat G_{k,j}^{(\alpha )}\left( {{t^{(k)}}} \right)},
\end{equation}
where $a_{0:N_k}^{(k)}$ are the associated discrete interpolation coefficients, $\forall k \in \MBK_K$, defined by
\begin{equation}\label{sec:ort:eq:sgt}
a_j^{(k)} = \frac{{{{\left( {{P_{{N_k}}}f,\hat G_{k,j}^{(\alpha )}} \right)}_{k,{N_k}}}}}{{\left\| {\hat G_{k,j}^{(\alpha )}} \right\|_{w_k^{(\alpha )}}^2}} = \frac{{{{\left( {f,\hat G_{k,j}^{(\alpha )}} \right)}_{k,{N_k}}}}}{{\left\| {\hat G_{k,j}^{(\alpha )}} \right\|_{w_k^{(\alpha )}}^2}} = \frac{1}{{\lambda _{k,j}^{(\alpha )}}}\sum\limits_{l = 0}^{N_k} {\varpi _{k,l}^{(\alpha )}\,f\left(\hat t_{N_k,l}^{(k),\alpha}\right)\,\hat G_{k,j}^{(\alpha )}\left(\hat t_{N_k,l}^{(k),\alpha}\right)},\quad \forall j \in \MBJ_{N_k}^+.
\end{equation}
Equation \eqref{sec:ort:eq:sgt} gives the discrete SG transform on $\F{\Gamma}_k$. Substituting Eq. \eqref{sec:ort:eq:sgt} into Eq. \eqref{eq:Sel1} yields the SGG interpolant of $f$ in the following Lagrange form 
\begin{equation}\label{sec:ort:eq:Lagint1}
	{P_{N_k}}f\left(t^{(k)}\right) = \sum\limits_{l = 0}^{N_k} {f\left(\hat t_{N_k,l}^{(k),\alpha}\right) \,\C{L}_{k,l}^{(\alpha )}\left(t^{(k)}\right)},
\end{equation}
where $\C{L}_{k,0:N_k}^{(\alpha )}\left(t^{(k)}\right)$ are the shifted Lagrange interpolating polynomials in basis-form defined on $\F{\Gamma}_k$ by
\begin{equation}\label{sec:ort:eq:Lag1}
	\C{L}_{k,l}^{(\alpha )}\left(t^{(k)}\right) = \varpi _{k,l}^{(\alpha )}\sum\limits_{j = 0}^n {{{\left( {\lambda _{k,j}^{(\alpha )}} \right)}^{ - 1}}\,\hat G_{k,j}^{(\alpha )}\left(\hat t_{N_k,l}^{(k),\alpha}\right)\,\hat G_{k,j}^{(\alpha )}\left(t^{(k)}\right)},\quad \forall l \in \MBJ_{N_k}^+.
\end{equation}
A faster and more numerically stable way to evaluate ${P_{N_k}}f\left(t^{(k)}\right)$ can be achieved by calculating Lagrange polynomials through the ``true'' barycentric formula
\begin{equation}
\C{L}_{k,l}^{(\alpha )}\left( {{t^{(k)}}} \right) = \frac{{\xi _{k,l}^{(\alpha )}}}{{{t^{(k)}} - \hat t_{{N_k},l}^{(k),\alpha }}}/\sum\limits_{j = 0}^{{N_k}} {\frac{{\xi _{k,j}^{(\alpha )}}}{{{t^{(k)}} - \hat t_{{N_k},j}^{(k),\alpha }}}},\quad \forall l \in \MBJ_{N_k}^+,
\label{eq:new1}
\end{equation}
which brings into play the barycentric weights ${\xi _{k,0:N_k}^{(\alpha )}}$ that depend on the interpolation points. An interpolation in Lagrange form with Lagrange polynomials defined by Formula  \eqref{eq:new1} is often referred to by ``a barycentric rational interpolation.'' The barycentric weights associated with the SGG points in $\bm{\Gamma}_1$ can be expressed explicitly in terms of the corresponding Christoffel numbers in algebraic form by
\begin{equation}
\xi _{1,l}^{(\alpha )} = \left(\frac{2}{\tau_1}\right)^{\alpha+1}{( - 1)^l}\sqrt {\left( {{\tau _1} - \hat t_{{N_1},l}^{(1),\alpha }} \right)\hat t_{{N_1},l}^{(1),\alpha }\varpi _{1,l}^{(\alpha )}},\quad \forall l \in \MBJ_{N_1}^+,
\end{equation}
or in trigonometric form through
\begin{equation}
\xi _{1,l}^{(\alpha )} = {\left( {\frac{2}{{{\tau _1}}}} \right)^\alpha }{( - 1)^l}\sin \left( {{{\cos }^{ - 1}}\left( {\frac{{2\hat t_{{N_1},l}^{(1),\alpha }}}{{{\tau _1}}} - 1} \right)} \right)\sqrt {\varpi _{1,l}^{(\alpha )}},\quad \forall l \in \MBJ_{N_1}^+,
\end{equation}
see (\cite[Eqs. (21) and (22)]{elgindy2018optimal}). Through the change of variables $x = \left({t^{(k)}} - \tau_k^+ \right)/\tau_k^-$, it is easy to show that the barycentric weights associated with the SSG in any partition $\bm{\Gamma}_k$ can be defined in algebraic form by
\begin{equation}\label{eq:newbaryform14May20221}
\xi _{k,l}^{(\alpha )} = {(\tau _k^ - )^{ - (\alpha  + 1)}}{( - 1)^l}\sqrt {\left( {{\tau _k} - \hat t_{{N_k},l}^{(k),\alpha }} \right)\left( {\hat t_{{N_k},l}^{(k),\alpha } - {\tau _{k - 1}}} \right)\varpi _{k,l}^{(\alpha )}},\quad \forall l \in \MBJ_{N_k}^+,
\end{equation}
or in trigonometric form through
\begin{equation}\label{eq:newbaryform14May20222}
\xi _{k,l}^{(\alpha )} = {(\tau _k^ - )^{ - \alpha }}{( - 1)^l}\sin \left( {{{\cos }^{ - 1}}\left( {\frac{{\hat t_{{N_k},l}^{(k),\alpha } - \tau _k^ + }}{{\tau _k^ - }}} \right)} \right)\sqrt {\varpi _{k,l}^{(\alpha )}},\quad \forall l \in \MBJ_{N_k}^+.
\end{equation}
Formula \eqref{eq:newbaryform14May20222} avoids the cancellation error in calculating Formula \eqref{eq:newbaryform14May20221} using floating point arithmetic due to the clustering of the SGG points near the endpoints of each partition as the mesh size increase. The successive integrations of the SGG interpolant \eqref{sec:ort:eq:Lagint1} on the intervals $\left[\tau_{k-1}, \hat t_{N_k,i}^{(k),\alpha}\right] \forall i \in \MBJ_{N_k}^+$, give rise to the first-order, $k$th element, square SG integration matrix (SGIM) in barycentric form, ${}_k\F{P} = {\left( {{}_k{p_{i,j}}} \right)_{{\mkern 1mu} 0 \leqslant i,j \leqslant N_k}}$, whose elements are defined by
\begin{equation}\label{eq:catmice1}
{{}_k{p_{i,j}}} = \varpi _{k,j}^{(\alpha )}\sum\limits_{l = 0}^n {{{\left( {\lambda _{k,l}^{(\alpha )}} \right)}^{ - 1}}\,\hat G_{k,l}^{(\alpha )}\left(\hat t_{N_k,j}^{(k),\alpha}\right)\,\C{I}_{{\tau _{k - 1}}, {\hat t_{N_k,i}^{(k),\alpha}}}^{\left(t^{(k)}\right)} \hat G_{k,l}^{(\alpha )}},\quad i,j \in \MBJ_{N_k}^+,\; k \in \MBK_K,
\end{equation}
in basis-form, or
\begin{equation}\label{eq:catmicerat1}
{{}_k{p_{i,j}}} = \C{I}_{{\tau _{k - 1}}, {\hat t_{N_k,i}^{(k),\alpha}}}^{\left(t^{(k)}\right)} \C{L}_{k,j}^{(\alpha )},\quad i,j \in \MBJ_{N_k}^+,\; k \in \MBK_K,
\end{equation}
in rational-form, where $\C{L}_{k,j}^{(\alpha )}$ is as defined by Eq. \eqref{eq:new1}. When the computations of $\C{I}_{{y_{M_k,i}}}^{\left(t^{(k)}\right)}({P_{N_k}}f)$ are needed $\foralls M_k$-random set of points $\left\{ {{y_{M_k,0:M_k}}} \right\} \subset (\tau_{k-1},\tau_k]:{y_{M_k,i}} \notin \MBG_{N_k}^{(\alpha),k}\,\forall M_k \in {\MBZ^ + },i \in \MBJ_{M_k}^+$, one can derive the elements formulas of the associated rectangular SGIM, ${}_k\hat{\F{P}} = \left({}_k{\hat p_{i,j}}\right), i \in \MBJ_{M_k}^+, j \in \MBJ_{N_k}^+$, by performing the replacement ${\hat t_{N_k,i}^{(k),\alpha}} \leftarrow y_{M_k,i}$ in Formulas \eqref{eq:catmice1} and \eqref{eq:catmicerat1}. If an element $y_{M_k,i} = \tau_k\,\foralla k \in \MBK_K$, we shall conveniently denote it by ${\hat t_{N_k,N_k+1}^{(k),\alpha}}$, and replace its associated matrix elements ${}_k{\hat p_{i,j}}$ in this particular case by ${{}_k{p_{N_k+1,j}}}$ such that ${}_k\F{P}_{N_k+1} = {{}_k{p_{N_k+1,0:N_k}}}$. For a comprehensive review on Gegenbauer polynomials and quadratures and their relevant theory, the reader may consult \cite{Abramowitz1964handbook,el1969chebyshev,Doha199075,Elgindy201382,
Elgindy20161,Elgindy20171,abd2014new,hafez2022shifted,taghian2021shifted}, and the references therein. For additional information on why we prefer to use Gegenbauer polynomials and their shifted variants in numerical discretization, the reader may consult \cite{ElgindyHareth2022a,elgindy2018high} and the references therein.

In the following section, we present the main numerical method used in this study. Interested readers to learn about the errors and convergence rates associated with FPS approximations and barycentric SG quadratures may refer to Section \ref{sec:ECAnKK1}. The reader may also consult Section \ref{sec:DTD} for a detailed prescription of how to reconstruct piecewise analytic functions directly from the FPS data with high accuracy up to the points of jump discontinuities. 

\section{The FG-PC Method}
\label{sec:FPCFPI1}
We initiate our proposed method of solution by collocating Problem $\C{IP}$ in the Fourier physical space at the set of mesh points $\MBS_N$ with the aid of Formulas \eqref{eq:Toto13Feb1} and \eqref{eq:PCase13Feb221} to obtain the discrete OCP

\begin{mini}
  {}{J_N = \frac{1}{N} \left(\bmone_N^t [\bs; s_{1:N-1}]\right)}{}{}
  {\label{prob:DOC1}}{}
  \addConstraint{s_{1:N-1}}{= \bs \bmone_{N-1} + \Fthe_{\hcancel{0}}\,\psi_{0:N-1}}{}
  \addConstraint{\bmone_N^t u_{0:N-1}}{= N \bu}{}
  \addConstraint{{\bmzer_{N-1}}}{\le s_{1:N-1} \le \Sin \bmone_{N-1}}{}
  \addConstraint{u_{\min} \bmone_N}{\le u_{0:N-1} \le u_{\max} \bmone_N.}{}
\end{mini}
Let $\bmX = [s_{1:N-1}; u_{0:N-1}], \bar{\bmX} = [\bs; \bmX_{1:N-1}], \bm{A} = [\bmzer_{N-1}^t, \bmone_N^t], \bmX_{lb} = [\bmzer_{N-1}; u_{\min} \bmone_N], \bmX_{ub} = [\Sin \bmone_{N-1}; u_{\max} \bmone_N]$, and $\hat\bmX = \Sin \bmone_N-\bar{\bmX}$. Then 
\[\psi_{0:N-1} = \left[\bmX_{N:2 N-1}-\mu_{\max} \bar{\bmX} \oslash \left(k_s \hat\bmX +\bar{\bmX}\right)\right] \odot \hat\bmX,\]
and the following scaled optimization problem 
\begin{mini}
  {}{\bar J_N = \bmone_N^t \bar{\bmX}}{}{}
  {\label{prob:DOC2}}{}
  \addConstraint{\bmX_{1:N-1}}{= \bs \bmone_{N-1} + \Fthe_{\hcancel{0}}\,\psi_{0:N-1}}{}
  \addConstraint{\bm{A} \bmX}{= N \bu}{}
  \addConstraint{\bmX_{lb}}{\le \bmX \le \bmX_{ub}.}{}
\end{mini}
is equivalent to the foregoing constrained NLP in the sense that an optimal solution $\bmX^* = [s_{1:N-1}^*; u_{0:N-1}^*]$ to the latter problem is also an optimal solution to the former problem; moreover, the optimal objective function value $J_N^* = \bar J_N^*/N$. We denote the predicted optimal state- and control-variables obtained at this stage by $s^p(t)$ and $u^p(t)$, respectively; their associated predicted optimal objective function value is denoted by $J_N^p$. Moreover, we denote $s_{1:N-1}^p$ and $u_{0:N-1}^p$ by $\bms_N^p$ and $\bmu_N^p$, and refer to them together with $\bmX^p = [\bms_N^p; \bmu_N^p]$ by the predicted state- control, and solution-vectors, in respective order. 

To improve the obtained approximations we construct the $T$-periodic Fourier interpolants ${I_N}s^p$ and ${I_N}u^p$ from $\bms_N^p$ and $\bmu_N^p$ through Formula \eqref{eq:eqLF1} as follows:
\begin{subequations}
\begin{align}
{I_N}s^p(t) &= \boldsymbol{\C{F}}(t)\,\bar{\bms}_N^p,\label{eq:May0920221}\\
{I_N}u^p(t) &= \boldsymbol{\C{F}}(t)\,\bmu_N^p,
\end{align}
\end{subequations}
where $\bar{\bms}_N^p = [\bs; \bms_N^p]$ and $\boldsymbol{\C{F}} = [\C{F}_{0}, \ldots, \C{F}_{N-1}]$. We then estimate the jump discontinuity points ${\tilde \xi }_{1:2}$ of the predicted controller and reconstruct the approximate piecewise analytic controller $\breve u_{N,M}$ from the PS data using Algorithm \ref{alg:2}. Since the controller is a bang–bang controller, then $u \in \{u_{min}, u_{\max}\}$, and the approximation $\breve u_{N,M}$ can be further corrected by the following formula:
\begin{subequations}
\begin{equation}\label{eq:CLPR1}
u_{N,M}^c(t) = \left\{ \begin{array}{l}
u_{\max},\quad 0 \le t < {{\tilde \xi }_1} \vee {{\tilde \xi }_2} \le t \le T,\\
u_{\min},\quad {{\tilde \xi }_1} \le t < {{\tilde \xi }_2},
\end{array} \right.
\end{equation}
if $\left|\breve u_{N,M}(\tilde \xi_1) - u_{\min}\right| < \left|u_{\max} - \breve u_{N,M}(\tilde \xi_1)\right|$, or by
\begin{equation}\label{eq:CLPR2}
u_{N,M}^c(t) = \left\{ \begin{array}{l}
u_{\min},\quad 0 \le t < {{\tilde \xi }_1} \vee {{\tilde \xi }_2} \le t \le T,\\
u_{\max},\quad {{\tilde \xi }_1} \le t < {{\tilde \xi }_2},
\end{array} \right.
\end{equation}
\end{subequations}
otherwise. Formulas \eqref{eq:CLPR1} and \eqref{eq:CLPR2} provide accurate approximations to the exact controller for relatively large values of $N$ and $M$ due to the close proximity of ${\tilde \xi }_{1:2}$ from the true jump discontinuity points ${\xi }_{1:2}$ as we shall demonstrate later in \ref{sec:DTD}. Now, let $\tau_1 = \tilde \xi_1, \tau_2 = \tilde \xi_2$, and $K = 3$. To obtain the corresponding corrected values of the state variable we can solve the nonlinear equality constraints 
\begin{equation}\label{eq:almthere1}
s_{1:N-1} = \bs \bmone_{N-1} + \C{I}_{\F{\Omega}_{x_{N,1: N-1}}}^{(t)} \psi,
\end{equation}
for $s_{1:N-1}$ starting from some initial approximations. However, to approximate $\C{I}_{\F{\Omega}_{x_{N,1: N-1}}}^{(t)} \psi$ using the highly accurate SG quadratures, the grid point values of $\psi$ at the SGG points in each partition $\F{\Gamma}_k$ are required $\forall k \in \MBK_3$ as illustrated earlier in Section \ref{subsec:BSGQMay20221}. While the corrected control values at the SGG points in each partition, $\bmu_{N_k}^c = u_{0:N_k}^{c,k} := u_{N,M}^c\left(\hat t_{N_k,0:N_k}^{(k),\alpha}\right)$, can be easily calculated through Formulas \eqref{eq:CLPR1} or \eqref{eq:CLPR2}, a difficulty arises in accurately computing the corresponding corrected state values at the SGG points, $\bms_{N_k}^c = s_{0:N_k}^{c,k}: = s^{c,k}\left(\hat t_{N_k,0:N_k}^{(k),\alpha}\right)\,\forall k \in \MBK_{3}$, where $s^{c,k}$ is the corrected state function on $\F{\Gamma}_k\,\forall k \in \MBK_3$. In particular, attempting to calculate the interpolated, predicted state values at the SGG points, $ \bms_k^p = s_{0:N_k}^{p,k} := {I_N}s^p\left(\hat t_{N_k,0:N_k}^{(k),\alpha}\right)\,\forall k \in \MBK_3$, through Eq. \eqref{eq:May0920221} to recover the necessary values of $\psi$ at the SGG points would drive the iterative method employed to solve the nonlinear system \eqref{eq:almthere1} to generate a sequence of spurious approximations to the state values at the equispaced nodes, $s_{1:N-1}$, using crude input data induced by poor approximations to the state derivative values at the SGG points, $\psi_{0:N_k}^G := \psi\left(\hat t_{N_k,0:N_k}^{(k),\alpha}\right)\,\forall k \in \MBK_{3}$, which are inherited from the noisy data $\bar{\bms}_N^p$. A viable alternative to overcome this problem is to resample the collocation points set and carry out the collocation of Eq. \eqref{eq:IDS1} at the SGG points $\hat t_{N_k,0:N_k}^{(k),\alpha}\,\forall k \in \MBK_{3}$ in lieu of the equispaced nodes $x_{N,1: N-1}$ to obtain the following nonlinear systems of equations:
\begin{equation}\label{eq:almthere2}
\bms_{N_k}^c = \bs \bmone_{N_k+1} + \C{I}_{\F{\Omega}_{\hat t_{N_k,0:N_k}^{(k),\alpha}}}^{(t)} \psi,\quad \forall k \in \MBK.
\end{equation}
To put it another way, the initial ``prediction'' phase of our proposed method predicts the optimal state- and control-values through collocation of Problem $\C{IP}$ in the Fourier physical space at the set of mesh points $\MBS_N$. The next ``correction'' phase refines the predicted values of the solutions through three steps: (i) Estimating ${\tilde \xi }_{1:2}$ of the predicted controller and reconstructing $\breve u_{N,M}$ from the FPS data using Algorithm \ref{alg:2}, (ii) correcting $\breve u_{N,M}$ through Formula \eqref{eq:CLPR1} or \eqref{eq:CLPR2} to obtain the corrected OC $u_{N,M}^c$, and (iii) correcting the predicted optimal state values $\bms_N^p$ at the equispaced nodes through collocation of Eq. \eqref{eq:IDS1} at the SGG points to obtain the corrected optimal state values $\bms_{N_k}^c$. The profile of the corrected optimal state variable $s^{c,k}$ on $\F{\Omega}_T$ can be generated via a piecewise combination of the SG interpolants defined by Eqs. \eqref{sec:ort:eq:Lagint1}, \eqref{eq:new1}, and \eqref{eq:newbaryform14May20222}. We denote the corrected optimal objective function value associated  with $s^{c,k}$ and $u_{N,M}^c$ by $J_N^c$.


\begin{rem}
We prefer to solve the nonlinear system \eqref{eq:almthere2} using the MATLAB fsolve solver carried out using the efficient Trust-Region-Dogleg (TRD) algorithm, which is specially designed to solve nonlinear equations. To initiate the iterative method, we can evaluate $\bms_k^p$ using Eq. \eqref{eq:May0920221} and use it as an initial guess for the corrected state vector together with $\bmu_{N_k}^c$ to set up the initial guesses for $\psi_{0:N_k}^G\,\forall k \in \MBK$. Therefore, as the TRD algorithm progresses through iterations, both approximations to the state and its derivative are improved, and ultimately the sequence of solution approximations converges rapidly to $\bms_{N_k}^c$.
\end{rem}

\section{Simulation Results}
\label{sec:SRMay221}
This section shows the approximate optimal solutions and objective function values obtained through the proposed method using the following two experimental data sets $\FR{D}_1 = \{\Sin = 3, \bs = 2.9, u_{\min} = 0, u_{\max} = 2, \mu_{\max} = 1, k_s = 2.5\}$ and $\FR{D}_2 = \{\Sin = 3, \bs = 1.8377, u_{\min} = 0, u_{\max} = 2, \mu_{\max} = 1, k_s = 2.5\}$. The set $\FR{D}_1$ is a new set of parameter values that has not yet been investigated for optimizing chemostat performance, whereas this subject has been studied before for $\FR{D}_2$ in \cite{bayen2018optimal}. The numerical experiments for both data sets were carried out using MATLAB R2022a software installed on a personal laptop equipped with a 2.9 GHz AMD Ryzen 7 4800H CPU and 16 GB memory running on a 64-bit Windows 11 operating system. The constrained optimization problem \eqref{prob:DOC2} was solved using MATLAB fmincon solver with the interior-point algorithm. The fmincon solver was carried out using the stopping criteria TolFun $=$ TolX $=$ 1E-14 and the initial guesses $s(t) = \bs$ and $u(t) = \bu = 58/63$ and $36754/94869\,\forall t \in \F{\Omega}_T$ for $\FR{D}_1$ and $\FR{D}_2$, respectively, which can be calculated through Eq. \eqref{eq:bsbu1}. The nonlinear system \eqref{eq:almthere2} was solved using MATLAB fsolve solver carried out using the TRD algorithm with the stopping criteria StepTolerance $=$ TolFun $=$ 1E-15.

First, we consider the problem of optimizing the periodic performance of a chemostat for the experimental dataset $\FR{D}_1$. Figure \ref{fig:figures1} shows the approximate optimal solution plots obtained at various stages on $\F{\Omega}_{10}$ for the parameter values $N = M = 100, N_{1:3} = [16, 16, 4]$, and $\alpha = -0.1$. Both the fmincon and fsolve solvers were terminated successfully in $53$ and $4$ iterations, respectively. The corresponding median of the measured wall-clock time for the FG-PC method was approximately $1.19$ s. Notice here how the noisy data obtained in the prediction phase owing to the Gibbs effect have been successfully smoothed out in the correction phase. The distinctive feature in the profile of the corrected optimal state variable arose in the formation of an interior layer followed by a boundary layer at which a steep fall in the substrate concentration was noted near the end of the time period. In particular, $s^{c,k}$ exhibits two thin transition layers, where it varies rapidly but varies regularly and slowly in the remaining part of the domain. The corresponding corrected optimal controller is a bang-bang controller that contains only two switches, with the second switch being in close proximity to $t = T$. Figure \ref{Sim2} shows the plots of both $J_N^p$ and $J_N^c$ for increasing $N$ values, where $J_N^c$ converges to $2.407$ rounded to three decimal digits. Assuming that the time period is measured in hours (h), the approximate time switch values $\tilde \xi_i, i = 1, 2$ obtained using the current method were approximately $5.39$ and $9.95$ h, respectively. Figure \ref{fig:TRes_Sim3_2} shows the plots of the corrected optimal performance index value $J_N^c$ against the cycling time $T$ for $T = 0:40, N = 200, M = 500, N_{1:3} = [20]_3$, and $\alpha = 1/2$, in which we observe the monotonic decline of the $J_N^c$ curve as the cycling time $T$ increases before it nearly flattens as $T$ increases.

\begin{figure}
\vspace{-0.5cm}
\centering
\includegraphics[width=\textwidth]{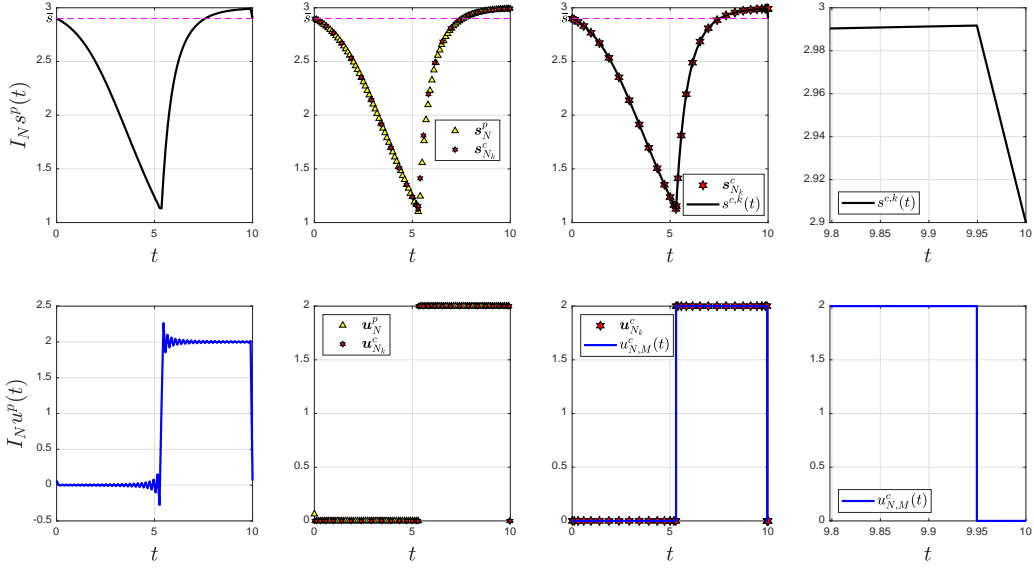}
\caption{Approximate solutions obtained by the FG-PC method using the parameter values $N = M = 100, N_{1:3} = [16, 16, 4]$, and $\alpha = -0.1$. The first column shows the plots of the predicted optimal state- and control-variables on $\F{\Omega}_{10}$ using $M+1$ equispaced points from $0$ to $10$ connected by line segments. The second column shows the predicted state- and control-variables at the equispaced collocation points set $\MBS_N$ together with their corresponding corrected values at the SGG points sets $\MBG_{N_{1:3}}^{(\alpha)}$. Column 3 shows the corrected data at the collection of SGG points sets $\MBG_{N_{1:3}}^{(\alpha)}$ and the estimated discontinuity points $\tilde \xi_i, i = 1, 2$ sorted in ascending order in addition to the interpolated, corrected optimal state- and control-variables, $s^{c,k}$ and $u_{N,M}^c$, respectively. Column 4 shows a zoom in of the corrected optimal state- and -control-variables near $t = T$. The level of the substrate concentration $\bs$ is shown in horizontal dashed line.}
\label{fig:figures1}
\end{figure}

\begin{figure}[ht]
\centering
\hspace{-2cm}\includegraphics[scale=0.45]{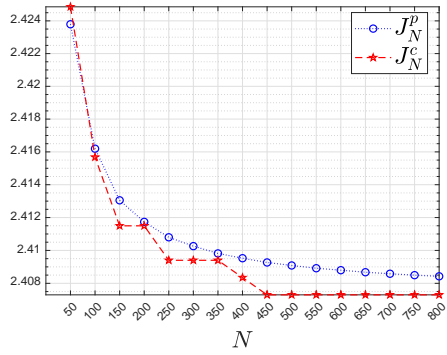}
\caption{Plots of $J_N^p$ and $J_N^c$ against $N = 50:50:800$ using the FG-PC method with $M = 1000, N_{1:3} = [30]_3$, and $\alpha = 1/2$.}
\label{Sim2}
\end{figure}

\begin{figure}[ht]
\centering
\includegraphics[scale=0.45]{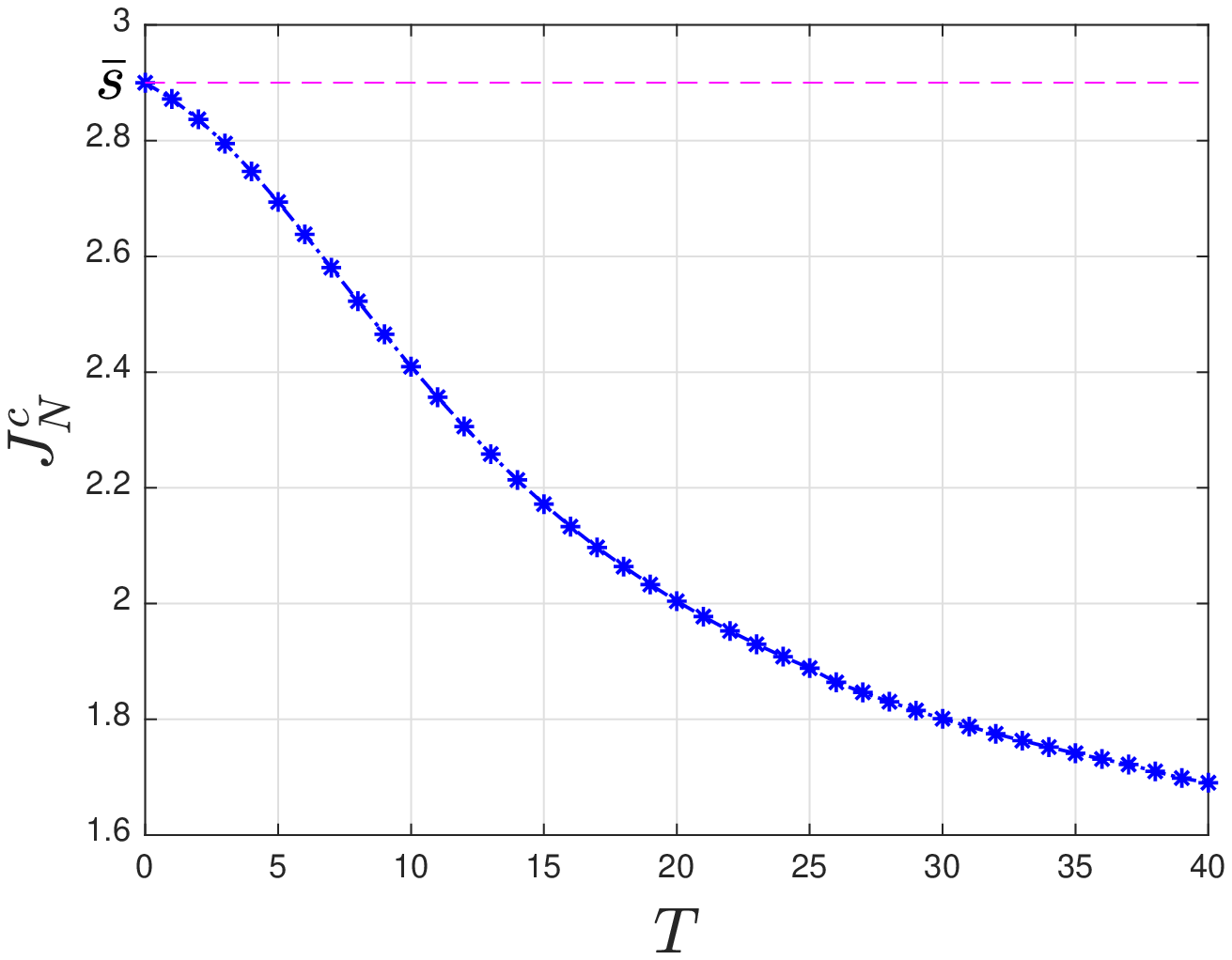}
\caption{Plots of the corrected optimal performance index $J_N^c$ against the cycling time $T$ for $T = 0:40, N = 200, M = 500, N_{1:3} = [20]_3$, and $\alpha = 1/2$.}
\label{fig:TRes_Sim3_2}
\end{figure}

Figure \ref{fig:figures1} shows the numerical simulations for experimental dataset $\FR{D}_2$. In particular, the figure shows the approximate optimal solution plots obtained using the proposed FG-PC method at various stages on $\F{\Omega}_{10}$ for the parameter values $N = 50, M = 200, N_{1:3} = [20, 20, 2]$, and $\alpha = 0.5$. Both the fmincon and fsolve solvers stopped successfully in $62$ and $10$ iterations, respectively. The corresponding median of the measured wall-clock time for the FG-PC method was approximately $0.66$ s. Note again how the correction phase successfully smoothens the noisy data of the substrate and control profiles acquired in the prediction phase. However, in contrast to the observations made for the experimental dataset $\FR{D}_1$, we observe the formation of two boundary layers in the profile of the corrected optimal state variable, at which two steep falls in the substrate concentration occur near the boundaries of each time cycle of $10$ h. The corresponding corrected optimal controller is also a bang-bang controller that contains only two switches in close proximity to $t = 0$ h and $t = 10$ h. The approximate time switch values $\tilde \xi_i, i = 1, 2$ obtained using the current method were approximately $0.025$ h and $9.4$ h, respectively. It is interesting to note that our calculated corrected optimal performance index $J_{50}^c$ value is approximately $0.678$ rounded to three decimal digits, a reduction of approximately 57\% of the estimated optimal cost function value $1.5817$ obtained earlier in \cite{bayen2018optimal}. 

\begin{figure}
\vspace{-0.5cm}
\centering
\includegraphics[width=\textwidth]{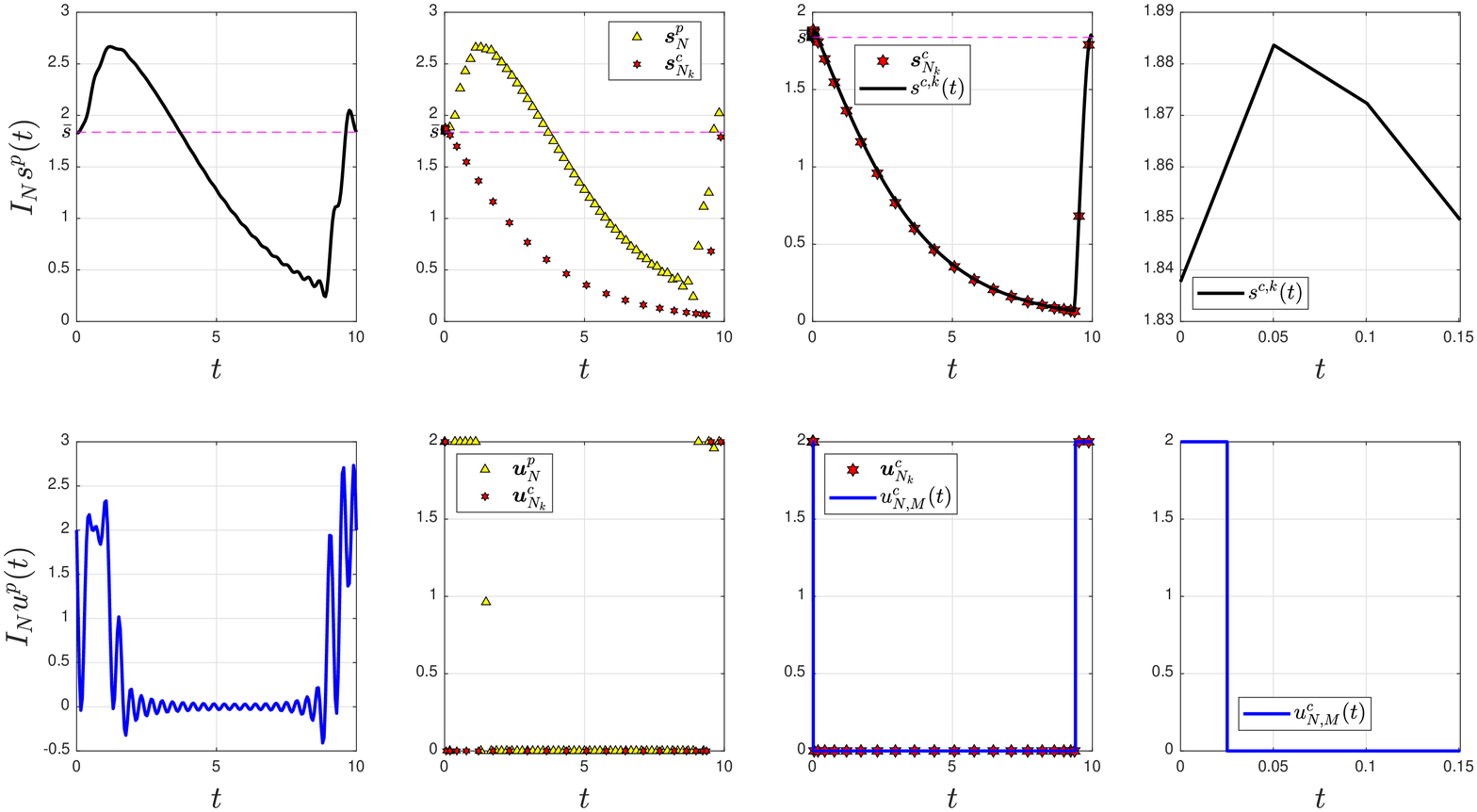}
\caption{Approximate solutions obtained by the FG-PC method using the parameter values $N = 50, M = 200, N_{1:3} = [20, 20, 2]$, and $\alpha = 0.5$. The first column shows the plots of the predicted optimal state- and control-variables on $\F{\Omega}_{10}$ using $M+1$ equispaced points from $0$ to $10$ connected by line segments. The second column shows the predicted state- and control-variables at the equispaced collocation points set $\MBS_N$ together with their corresponding corrected values at the SGG points sets $\MBG_{N_{1:3}}^{(\alpha)}$. Column 3 shows the corrected data at the collection of SGG points sets $\MBG_{N_{1:3}}^{(\alpha)}$ and the estimated discontinuity points $\tilde \xi_i, i = 1, 2$ sorted in ascending order in addition to the interpolated, corrected optimal state- and control-variables, $s^{c,k}$ and $u_{N,M}^c$, respectively. Column 4 shows a zoom in of the corrected optimal state- and -control-variables near $t = 0$. The level of the substrate concentration $\bs$ is shown in horizontal dashed line.}
\label{fig:Res1NewComp1}
\end{figure}

\section{Conclusions, Discussion and Future Work}
\label{sec:conc}
Numerical simulations demonstrate that the performance of the chemostat can be upgraded with minimal cost in terms of the time-averaged substrate concentration by adopting the OC strategy obtained through the proposed FG-PC method. The numerical simulations also manifest the decay of $J_N^c$ at a slower rate as $T$ increases, before it converges  asymptotically to a certain limit for a large cyclic time $T$. Therefore, the performance of the chemical reactor can be increased significantly as $T$ increases up to a certain $T$ limit. Part of the success of the FG-PC method owes to the accurate and efficient construction of the newly developed FPSI matrices in reduced form through Algorithm \ref{alg:1}, which can quickly generate the required FPS data in the prediction phase. Another indispensable feature of the proposed method is its ability to determine sufficiently close estimates to the jump discontinuities of the bang-bang controller and reconstruct an accurate control model through the novel Algorithm \ref{alg:2}. The derived barycentric SG quadratures proved to be highly accurate and feasible in the correction phase of the FG-PC method, and the numerical simulations demonstrated their excellent capacity together with the TRD algorithm in computing the necessary definite integrals of the reconstructed state derivative discontinuous function from the FPS data. In the absence of a priori knowledge of the OC extreme values $u_{\max}$ and $u_{\min}$, the FG-PC method generates a sequence of approximations that converge algebraically to the OC for increasing $N$ and $M$ values, as shown in Tables \ref{tab:OFICAAPOD2} and \ref{tab:OFICAAPOD3}. However, given the $u_{\max}$ and $u_{\min}$ values, as often the case with chemostat processes, the built-in adaptivity of the FG-PC method enables it to produce exact OC policies up to the vicinity of a jump discontinuity through Eqs. \eqref{eq:CLPR1} and \eqref{eq:CLPR2}.

The current study asserts that the optimal substrate concentration corresponding to the optimal dilution rate of the experimental dataset $\FR{D}_1$ exhibits two thin transition layers: one interior layer and one boundary layer near the end of the time period, where it varies rapidly, but varies regularly and slowly in the remaining part of the domain. The associated optimal periodic control has exactly two switching times, and the optimal controller should be defined by Formula \eqref{eq:CLPR2} such that the optimal dilution rate remains zero for approximately half the time period until the occurrence of the first switching time. The policy of the FG-PC method recommends that the chemostat should initially pass through a starvation phase, in which no nutrient flow to the chemostat culture is allowed for approximately $5.39$ h. Because $\mu_{\max} > \bu$, the utilization of the substrate exceeds the initial supply of the substrate in the absence of washout\footnote{The washout term means the mass flow rate of cells that leave with the outgoing stream.}, causing the microorganism population to monotonically increase at a rapid pace, whereas the substrate concentration is consumed until it nearly runs out by the end of the same period. The current OC policy indicates that the dilution rate should shift abruptly to $2$ h$^{-1}$ and remain constant at this level for approximately $4.56$ h. The supply of substrate added with the inflowing fresh medium during this phase exceeds the demand of the substrate in the presence of an increasing washout rate, causing the substrate concentration to increase rapidly, while the microbes can no longer reproduce fast enough to maintain a population as they are continuously being washed out of the vessel until their population reaches nearly an extinction level at the end of this period. For the next $3$ minutes, the optimal dilution rate shifted back to zero, and the media feed stopped. During this last phase, the substrate concentration is initially at its peak, while the number of cells is at its minimum, so substrate depletion is substantial, lowering the concentration of the limiting substrate rapidly while the cell population grows swiftly until they both return to their initial states by the end of the time period.

The problem of optimizing the chemostat performance using the experimental dataset $\FR{D}_2$ was solved numerically in \cite{bayen2018optimal} using BOCOP with different initial guesses, where the optimal periodic control of the problem varied abruptly at only two switching times. In particular, the calculated optimal substrate was allowed to increase monotonically for nearly $1.5$ h until it reached its peak at nearly $2.8$. The optimal controller then shifts abruptly from $2$ h$^{-1}$ to $0$ h$^{-1}$, and remains constant at this level for approximately $8$ h. During this period, the substrate concentration profile continued to decrease slowly until it reached its base value of approximately $0.3$. Finally, the optimal controller switches back to its former state, and the substrate concentration grows rapidly owing to the constant flow of fresh media and nutrients into the system until it reaches its initial state. The optimal performance index value owing to this control policy was estimated to be $1.5817$. However, the developed FG-PC method reveals that the optimal dilution rate should remain at its peak for nearly $1.5$ min. until the first switching time, at which the media feed stops. The chemostat should then pass through a starvation phase in which no nutrient flow to the chemostat culture is allowed for approximately $9.37$ h. During this second phase, the substrate concentration is initially at its peak, whereas the number of cells is at its minimum. Therefore, the microorganism population monotonically increase at a rapid pace, while the substrate concentration is consumed until it nearly runs out by the end of the same period. The proposed OC policy shows that the dilution rate should shift abruptly back to $2$ h$^{-1}$ and remain constant at this level for approximately $36.18$ min, and the supply of substrate added with the inflowing fresh medium during this phase causes the substrate concentration to increase rapidly, while the microbes can no longer reproduce fast enough to maintain a population as they are continuously being washed out of the vessel until their population reaches nearly an extinction level at the end of this period. The profile of the optimal substrate concentration corresponding to the optimal dilution rate in this case exhibited two thin boundary layers, where it varied rapidly but varied regularly and slowly in the remaining part of the domain. It is noteworthy to mention here the reported corrected optimal performance index value $J_{50}^c \approx 0.678$ in Section \ref{sec:SRMay221}, which was obtained using the FG-PC method. Compared with the estimated optimal cost function value $1.5817$ obtained earlier in \cite{bayen2018optimal}, our recommended OC policy gives rise to a new OC policy for the optimal periodic performance of a chemostat in which the optimal cost function value can be reduced by approximately 57\%.

Another important result from a numerical viewpoint that could be added to the major contributions of this work is that the numerical results derived in \cite{bayen2018optimal} were obtained using a local optimization solver with random initial guesses, whereas the current FG-PC method employs automatically calculated and sufficiently close initial guesses at the outset of the correction phase, which adds more support and credibility to the new study results presented in this paper. A major contribution of this study lies in the introduction of a novel predictor-corrector approach combining IPS and adaptive $h$-IPS methods for the derivation of OC policies of periodically operated biochemical reactors. One potential direction in the future is to investigate the possibility of adding the time period $T$ as another optimization variable, and then probe how that would affect the proposed method. 

\section*{Limitation}
Although the OC policies derived in this work can maximize the chemostat performance overall, caution must be exercised when applying these strategies because slight changes in the process path may cause severe process instabilities. For example, a shift to a slightly higher dilution rate in the second phase using experimental data $\FR{D}_1$ may lead to a complete washout of the cells. 

\section*{Declarations}
\subsection*{Competing Interests}
The author declares there is no conflict of interests.

\subsection*{Availability of Supporting Data}
The author declares that the data supporting the findings of this study are available within the article.

\subsection*{Ethical Approval and Consent to Participate}
Not Applicable.

\subsection*{Human and Animal Ethics}
Not Applicable.

\subsection*{Consent for Publication}
Not Applicable.

\subsection*{Funding}
The author received no financial support for the research, authorship, and/or publication of this article.

\subsection*{Authors' Contributions}
The author confirms sole responsibility for the following: study conception and design, data collection, analysis and interpretation of results, and manuscript preparation.

\subsection*{Acknowledgments}
Not applicable.

\appendix
\section{Error and Convergence Analyses}
\label{sec:ECAnKK1}
In this section, we derive rigorous formulas for the errors and convergence rates associated with the two sets of numerical discretization tools introduced in Section \ref{sec:NDOnew1}.

\subsection{Errors and Convergence Rates of Fourier Interpolation and Quadrature for Nonsmooth and Generally $T$-Periodic Functions} 
\label{subsec:CRFNSPF1}
In this section, we study the errors and convergence rates of Fourier interpolation and integration operators for nonsmooth and generally $T$-periodic functions. To this end, let $\bm{\beta} = [-\beta, \beta]\,\forall \beta > 0, {\F{C}_{T,\beta} } = \left\{ {x + iy:x \in {\F{\Omega}_T},y \in \bm{\beta}} \right\}\forall \beta  > 0, C^n({\F{\Omega}_T})$ be the space of $n$ times continuously differentiable functions on ${\F{\Omega}_T}\,\forall n \in \MBZzerP$, $L^p({\F{\Omega}_T})$ is the Banach space of measurable functions $u$ defined on ${\F{\Omega}_T}$ such that ${\left\| u \right\|_{{L^p}}} = {\left( {{\C{I}_{\F{\Omega}_T}}{{\left| u \right|}^p}} \right)^{1/p}} < \infty$, and 
\[\displaystyle{{H^s}({\F{\Omega}_T}) = \left\{ {u \in {L_{loc}}({\F{\Omega}_T}),\;{D^\alpha }u \in {L^2}({\F{\Omega}_T}),\left| \alpha  \right| \le s\;} \right\}}\,\forall s \in \MBZzerP,\]
is the inner product space with the inner product $\displaystyle{{(u,v)_s} = \sum\nolimits_{\left| \alpha  \right| \le s} {\C{I}_{{\F{\Omega}_T}} ^{(x)}\left( {{D^\alpha }u\,{D^\alpha }v} \right)}}$, where ${{L_{loc}}({\F{\Omega}_T} )}$ is the space of locally integrable functions on ${\F{\Omega}_T}$ and ${{D^\alpha }u}$ denotes any derivative of $u$ with multi-index $\alpha$. Let also 
\[\C{H}_T^s = \left\{ {u \in {H^s}({\F{\Omega}_T}),\;{u^{(s)}} \in {BV},\;{u^{(0:s - 1)}}(0) = {u^{(0:s - 1)}}(T)} \right\},\]
where $\displaystyle{{BV} = \left\{ {u \in {L^1}({\F{\Omega}_T}):{{\left\| u \right\|}_{BV}} < \infty } \right\}}$ with the norm $\displaystyle{{{\left\| u \right\|}_{BV}} = \sup \left\{ {\C{I}_T^{(x)}(u\phi '),\;\phi  \in \C{D}({\F{\Omega}_T}),\;{{\left\| \phi  \right\|}_{{L^\infty }}} \le 1} \right\}}$ such that\\ $\C{D}({\F{\Omega}_T}) = \left\{ {u \in {C^\infty }({\F{\Omega}_T}):{\text{supp}}(u){\text{ is a compact subset of }}{\F{\Omega}_T}} \right\}$. For convenience of writing, we shall denote ${\left\|  \cdot  \right\|_{{L^2}({\F{\Omega}_T})}}$ and $e^{i \omega_k x}$ by $\left\|  \cdot  \right\|$ and $\phi_k(x)\,\forall k$, respectively. We shall first derive the decay rate of Fourier series coefficients for functions in ${{\C{H}_T^s}}\,\forall s \in \MBZzerP$. Note that functions in this space are $T$-periodic and continuous (but nonsmooth) for $s \ge 1$. However, functions in ${{\C{H}_T^0}}$ may neither be $T$-periodic nor continuous.

\begin{thm}[Decay of Fourier Series Coefficients for nonsmooth and generally $T$-periodic functions]\label{thm:00}
Suppose that $f \in {{\C{H}_T^s}} \foralls s \in \MBZzerP$ is approximated by the $N/2$-degree, $T$-periodic truncated Fourier series 
\begin{equation}\label{eq:FTS1}
{\Pi _N}f(x) = \sum\limits_{\left| k \right| \le N/2} {{{\hat f}_k}{\phi_k(x)}},\quad \foralls N \in \MBZeP,
\end{equation}
where $\hat f_{-N/2:N/2}$ is the Fourier series coefficients vector of $f$, then 
\begin{equation}\label{eq:fhatkMAR312021new10}
\left| {{{\hat f}_k}} \right| = O\left( {{{\left| {{k}} \right|}^{ - s - 1}}} \right),\quad \text{ as }\left|k\right| \to \infty.
\end{equation}
\end{thm}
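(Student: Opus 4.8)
The plan is to read off the decay rate directly from the continuous Fourier coefficient
\[
\hat f_k = \frac{1}{T}\,\C{I}_{0,T}^{(x)}\!\left(f(x)\,e^{-i\omega_k x}\right),
\]
by integrating by parts exactly $s$ times, using the periodicity hypotheses built into $\C{H}_T^s$ to kill every boundary term, and then handling the single remaining non-smooth integral through a Riemann--Stieltjes argument. The two analytic facts the proof rests on are the one-dimensional Sobolev embedding and Stieltjes integration by parts; everything else is bookkeeping of the factors $1/(i\omega_k)$.

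First I would perform the $s$ smooth integrations by parts. Since $f \in H^s(\F{\Omega}_T)$, the embedding $H^s(\F{\Omega}_T)\hookrightarrow C^{s-1}(\F{\Omega}_T)$ in one dimension gives that $f, f', \ldots, f^{(s-1)}$ are absolutely continuous with $f^{(s)}\in L^2(\F{\Omega}_T)\subset L^1(\F{\Omega}_T)$, so each of these integrations is classically valid. At the $j$th step ($1\le j\le s$) the boundary contribution is proportional to $f^{(j-1)}(T)\,e^{-i\omega_k T}-f^{(j-1)}(0)$; because $\omega_k T = 2\pi k$ forces $e^{-i\omega_k T}=1$, and the hypothesis $u^{(0:s-1)}(0)=u^{(0:s-1)}(T)$ gives $f^{(j-1)}(T)=f^{(j-1)}(0)$ for every $j\le s$, all these terms vanish. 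Each step also extracts a factor $1/(i\omega_k)$, so after $s$ steps
\[
\hat f_k = \frac{1}{T\,(i\omega_k)^s}\,\C{I}_{0,T}^{(x)}\!\left(f^{(s)}(x)\,e^{-i\omega_k x}\right).
\]

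The delicate step, and the main obstacle, is the last integral, because $f^{(s)}$ is merely of bounded variation and need not be differentiable or periodic. Here I would replace ordinary integration by parts with its Stieltjes counterpart: writing $g=f^{(s)}$,
\[
\C{I}_{0,T}^{(x)}\!\left(g(x)\,e^{-i\omega_k x}\right) = \frac{g(T)-g(0)}{-i\omega_k} + \frac{1}{i\omega_k}\int_0^T e^{-i\omega_k x}\,dg(x).
\]
Since $g\in BV$ it is bounded, so the boundary term is $O(1/|k|)$, and the Stieltjes integral is controlled by the total variation, $\bigl|\int_0^T e^{-i\omega_k x}\,dg(x)\bigr|\le \int_0^T |dg| = \|f^{(s)}\|_{BV}<\infty$; hence the whole right-hand side is $O(1/|k|)$. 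For $s=0$ the periodicity constraints are vacuous and this single Stieltjes estimate is the entire argument, recovering $|\hat f_k|=O(|k|^{-1})$.

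Finally I would combine the two estimates. Using $|\omega_k|=2\pi|k|/T$, the prefactor $|i\omega_k|^{-s}$ contributes $O(|k|^{-s})$ while the bracketed integral of $f^{(s)}$ contributes $O(|k|^{-1})$, so that
\[
\left|\hat f_k\right| = O\!\left(|k|^{-s}\right)\cdot O\!\left(|k|^{-1}\right) = O\!\left(|k|^{-s-1}\right)\quad\text{as }|k|\to\infty,
\]
which is precisely \eqref{eq:fhatkMAR312021new10}. In writing this up I would be careful to record the implied constant explicitly (it depends only on $T$, $s$, and $\|f^{(s)}\|_{BV}$) and to flag the two cited tools—the Sobolev embedding legitimizing the first $s$ integrations and the Stieltjes formula handling the $BV$ remainder—as the only nonelementary ingredients.
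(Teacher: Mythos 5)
Your proof is correct and follows essentially the same route as the paper's: $s$ classical integrations by parts whose boundary terms vanish by the periodicity conditions built into $\C{H}_T^s$ (together with $e^{-i\omega_k T}=1$), followed by one further gain of $\left|k\right|^{-1}$ coming from the bounded-variation hypothesis on $f^{(s)}$. The only cosmetic difference is in how that last factor is extracted: you use Riemann--Stieltjes integration by parts, bounding the Stieltjes integral by the total variation and keeping an explicit endpoint term $g(T)-g(0)$, whereas the paper writes $\phi_{-k}=\phi'_{-k}/(-i\omega_k)$ and appeals directly to the variational definition of $\left\| f^{(s)} \right\|_{BV}$ --- two faces of the same estimate, yielding the same $O\left(\left|k\right|^{-s-1}\right)$ decay.
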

\begin{proof}
Notice first that the set of complex exponentials $\displaystyle{\left\{ {{\phi _{- N/2:N/2}}} \right\}}$ is orthogonal on ${\F{\Omega}_T}$ with respect to the weight function $w(x) = 1\,\forall x \in {\F{\Omega}_T}$ such that $\left( {{\phi _n},{\phi _m}} \right) = \C{I}_T^{(x)}\left( {{\phi _n}\,\phi _m^*} \right) = T{\delta _{n,m}}$, where $\delta _{n,m}$ is the Kronecker delta function defined by 
\[{\delta _{n,m}} = \left\{ \begin{array}{l}
1,\quad n = m,\\
0,\quad n \ne m.
\end{array} \right.\]
Therefore, $\left( {{\phi _n},{\phi _n}} \right) = \C{I}_T^{(x)}\left( {{\phi _n}\,\phi _n^*} \right) = \C{I}_T^{(x)}\left( {{{\left| {{\phi _n}} \right|}^2}} \right) = {\left\| {{\phi _n}} \right\|^2} = T$. Fourier coefficients, $\hat f_k$, of $f$ can thus be determined via the
orthogonal projection $(f,\phi_k)$, which produces
\begin{equation}\label{eq:19APr2021_1}
{\hat f_k} = \frac{1}{T}(f,{\phi _k}) = \frac{1}{T}\C{I}_T^{(x)}\left( {f{\mkern 1mu} {\phi _{ - k}}} \right) = \left\{ \begin{array}{l}
\frac{1}{T}\C{I}_T^{(x)}{\left(f\cancbra{x - i\beta}{\mkern 1mu} {\phi _{ - k}}\cancbra{x - i\beta}\right)}\;\forall k \ge 0,\\
\frac{1}{T}\C{I}_T^{(x)}\left({f\cancbra{x + i\beta}{\mkern 1mu} {\phi _{ - k}}\cancbra{x + i\beta}} \right)\;\forall k < 0
\end{array} \right. = \left\{ \begin{array}{l}
\frac{{{e^{ - {\omega _{k\beta }}}}}}{T}\C{I}_T^{(x)}\left({f\cancbra{x - i\beta}{\mkern 1mu} {\phi _{ - k}}} \right)\;\forall k \ge 0,\\
\frac{{{e^{ - {\omega _{ - k\beta }}}}}}{T}\C{I}_T^{(x)}\left({f\cancbra{x + i\beta}{\mkern 1mu} {\phi _{ - k}}} \right)\;\forall k < 0.
\end{array} \right.
\end{equation}
Through Eq. \eqref{eq:19APr2021_1} and integration by parts, we have
\begin{equation}\label{eq:fhatkMAR312021new100}
{{\hat f}_k} = \frac{1}{T}\C{I}_T^{(x)}\left( {f{\phi _{ - k}}} \right) = \frac{1}{{T{{(i{\omega _k})}^s}}}\C{I}_T^{(x)}\left( {{f^{(s)}}{\phi _{ - k}}} \right) = \frac{1}{{T{{(i{\omega _k})}^{s + 1}}}}\C{I}_T^{(x)}\left( {{f^{(s)}}{\phi'_{ - k}}} \right)\;\forall k \in \MB{K}_N\backslash\{0\} \Rightarrow \left| {{{\hat f}_k}} \right| \le \frac{{{{\left\| {{f^{(s)}}} \right\|}_{BV}}}}{{T{\omega_{\left|k\right|}^{s + 1}}}},\quad \forall k \in \MB{K}_N\backslash\{0\},
\end{equation}
from which the Asymptotic Formula \eqref{eq:fhatkMAR312021new10} immediately follows. Further, $\displaystyle{\hat f_0 = \frac{1}{T} \C{I}_T^{(x)} {f}}$, i.e., $\hat f_0$ is the average value of the function.
\end{proof}
We refer to the upper bound \eqref{eq:fhatkMAR312021new100} by the ``Fourier coefficients upper bounds for nonsmooth and $T$-periodic functions.'' 

\begin{thm}[Fourier truncation error for nonsmooth and generally $T$-periodic functions]\label{thm:11}
Suppose that $f \in {{\C{H}_T^s}} \foralls s \in \MBZzerP$ is approximated by the $N/2$-degree, $T$-periodic truncated Fourier series \eqref{eq:FTS1}, then 
\begin{equation}\label{eq:Thm11}
{\left\| {f - {\Pi _N}f} \right\|} = O\left(N^{-s-1/2}\right),\quad \text{ as }N \to \infty.
\end{equation}
\end{thm}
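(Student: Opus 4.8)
The plan is to compute the $L^2$ truncation error directly via Parseval's identity and then bound the tail sum using the coefficient decay rate already established in Theorem \ref{thm:00}. Since the complex exponentials $\{\phi_k\}$ form an orthogonal system on $\F{\Omega}_T$ with $\|\phi_k\|^2 = T$, the truncation error is simply the tail of the Fourier series. First I would write $f - \Pi_N f = \sum_{|k| > N/2} \hat f_k \phi_k$, so that by orthogonality
\begin{equation}
\left\| f - \Pi_N f \right\|^2 = T \sum_{|k| > N/2} \left| \hat f_k \right|^2.
\end{equation}

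Next I would invoke the decay estimate $|\hat f_k| = O(|k|^{-s-1})$ from Theorem \ref{thm:00}, which gives a constant $C > 0$ with $|\hat f_k|^2 \le C |k|^{-2s-2}$ for all sufficiently large $|k|$. Substituting this bound yields
\begin{equation}
\left\| f - \Pi_N f \right\|^2 \le 2 T C \sum_{k > N/2} k^{-2s-2},
\end{equation}
where the factor of $2$ accounts for the symmetric positive and negative tails. The remaining task is to estimate this tail sum. I would compare the sum to an integral: since $k \mapsto k^{-2s-2}$ is decreasing, $\sum_{k > N/2} k^{-2s-2} \le \int_{N/2}^{\infty} x^{-2s-2}\,dx = \frac{1}{(2s+1)}(N/2)^{-2s-1} = O(N^{-2s-1})$. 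Combining the two displays gives $\|f - \Pi_N f\|^2 = O(N^{-2s-1})$, and taking square roots produces the claimed rate $O(N^{-s-1/2})$.

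The main obstacle is essentially bookkeeping rather than a deep difficulty: one must be careful that the decay bound from Theorem \ref{thm:00} is asymptotic (holds only for large $|k|$), so the finitely many small-$|k|$ terms in the tail — which exist only for small $N$ — must be absorbed into the implied constant, and the argument is cleanest when $N/2$ already exceeds the threshold past which the coefficient bound is valid. A secondary point worth stating carefully is that the integral comparison requires monotonicity of the summand, which holds here since the exponent $-2s-2$ is negative for every $s \in \MBZzerP$; the geometric-like decay then guarantees convergence of the tail and the exact power $N^{-2s-1}$ governing its size.
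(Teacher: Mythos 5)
Your proposal is correct and follows essentially the same route as the paper: orthogonality/Parseval to reduce the truncation error to the tail sum $T\sum_{|k|>N/2}|\hat f_k|^2$, the coefficient decay from Theorem \ref{thm:00}, and an integral comparison giving the tail the size $O\left(N^{-2s-1}\right)$. The only cosmetic difference is that the paper works with the explicit non-asymptotic bound $|\hat f_k| \le \|f^{(s)}\|_{BV}/\left(T\omega_{|k|}^{s+1}\right)$ valid for all $k\neq 0$, which removes the bookkeeping about absorbing small-$|k|$ terms and yields an explicit constant in Ineq.\ \eqref{eq:hello1}.
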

\begin{proof}
Observe first that
\begin{equation}\label{eq:thm1ineqfor01Apr20212}
{\left\| {f - {\Pi _N}f} \right\|^2} = \C{I}_T^{(x)}\left( {\sum\limits_{\left| k \right| > N/2} {{{\hat f}_k}{\phi _k}} \sum\limits_{\left| l \right| > N/2} {\hat f_l^*{\phi _{ - l}}} } \right) = \sum\limits_{\left| k \right| > N/2} {\sum\limits_{\left| l \right| > N/2} {{{\hat f}_k}\hat f_l^*} } \C{I}_T^{(x)}{{\phi _{k - l}}} = \sum\limits_{\left| k \right| > N/2} {\sum\limits_{\left| l \right| > N/2} {{{\hat f}_k}\hat f_l^*} ({\phi _k},{\phi _l})} = T\sum\limits_{\left| k \right| > N/2} {{{\left| {{{\hat f}_k}} \right|}^2}}.
\end{equation}
Through Eq. \eqref{eq:fhatkMAR312021new100} and Ineq. \eqref{eq:thm1ineqfor01Apr20212}, we have
\begin{align}
{\left\| {f - {\Pi _N}f} \right\|^2} &= T\sum\limits_{\left| k \right| > N/2} {{{\left| {{{\hat f}_k}} \right|}^2}} \le \frac{1}{T}\left\| {{f^{(s)}}} \right\|_{BV}^2\sum\limits_{\left| k \right| > N/2} {\omega _k^{ - 2s - 2}}  \le \frac{2}{T}\left\| {{f^{(s)}}} \right\|_{BV}^2  \C{I}_{N/2,\infty }^{(x)}\omega _x^{ - 2s - 2}  = \frac{{\left\| {{f^{(s)}}} \right\|_{BV}^2}}{{(2s + 1)\pi \omega _{N/2}^{2s + 1}}}\quad \forall N \in Z_e^ + .\nonumber\\
\Rightarrow \left\| {f - {\Pi _N}f} \right\| &\le \frac{{{{\left\| {{f^{(s)}}} \right\|}_{BV}}}}{{\sqrt {(2s + 1)\pi } \omega _{N/2}^{s + 1/2}}}\quad \forall N \in Z_e^ +.\label{eq:hello1}
\end{align}
\end{proof}

\begin{thm}[Fourier aliasing error for nonsmooth and $T$-periodic functions]\label{thm:22} Suppose that $f \in {{\C{H}_T^s}} \foralls s \in \MBZ^+$ is approximated by the $T$-periodic Fourier interpolant $I_Nf\,\foralls N \in \MBZeP$, then 
\begin{equation}\label{eq:FTS1AE1new1}
\left\| {{E_Nf}} \right\| = O\left( {{N^{-s-1/2}}} \right),\quad \text{as }N \to \infty.
\end{equation}
\end{thm}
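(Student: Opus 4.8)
The plan is to route through the truncated Fourier series $\Pi_N f$ of Theorem \ref{thm:11} and split the aliasing error $E_N f = f - I_N f$ as
\[
E_N f = (f - \Pi_N f) + (\Pi_N f - I_N f),
\]
so that $\|E_N f\| \le \|f - \Pi_N f\| + \|\Pi_N f - I_N f\|$. The first summand is already controlled by Theorem \ref{thm:11}, which supplies $\|f - \Pi_N f\| = O(N^{-s-1/2})$; hence it suffices to prove that the interpolation-minus-projection term obeys the same rate. Since both $\Pi_N f$ and $I_N f$ live in the span of $\{\phi_{-N/2:N/2}\}$, the second summand can be treated purely at the level of coefficients.

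First I would establish the aliasing relation linking the discrete coefficients $\tilde f_k$ to the exact Fourier coefficients $\hat f_k$. Substituting the full Fourier series $f = \sum_{l} \hat f_l \phi_l$ into the defining sum for $\tilde f_k$ and invoking the discrete orthogonality $\tfrac{1}{N}\sum_{j=0}^{N-1} e^{i(\omega_l - \omega_k) x_{N,j}} = 1$ when $l \equiv k \pmod N$ and $0$ otherwise, which is the same mechanism that produces Eq. \eqref{eq:FCI1}, I would obtain
\[
\tilde f_k = \sum_{m \in \MBZ} \hat f_{k + mN} = \hat f_k + \sum_{m \ne 0} \hat f_{k+mN}, \quad k \in \MB{K}'_N.
\]
Thus the modes of $\Pi_N f - I_N f$ in the interior range $|k| < N/2$ are exactly the folded tails $-\sum_{m\ne0}\hat f_{k+mN}$, while at the endpoints $k = \pm N/2$ the factors $c_{\pm N/2} = 2$ and the reduced-form convention of Eq. \eqref{eq:FI1nn1} contribute only the isolated coefficient $\hat f_{N/2} = O(N^{-s-1})$, a lower-order effect once squared. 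By the orthogonality $(\phi_n,\phi_m) = T\delta_{n,m}$ already exploited in Theorem \ref{thm:00}, Parseval's identity then gives
\[
\|\Pi_N f - I_N f\|^2 = T \sum_{|k| \le N/2} \Big| \sum_{m \ne 0} \hat f_{k+mN} \Big|^2 + (\text{endpoint terms}).
\]

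The remaining, and genuinely delicate, step is to bound the folded tail. Applying the decay estimate $|\hat f_k| \le C |k|^{-s-1}$ from Theorem \ref{thm:00} (Eq. \eqref{eq:fhatkMAR312021new100}) and noting that for $|k| \le N/2$ and $m \ne 0$ one has $|k + mN| \ge (|m| - 1/2) N \ge N/2$, each inner sum is dominated by
\[
\Big| \sum_{m \ne 0} \hat f_{k+mN} \Big| \le C\, N^{-s-1} \sum_{m \ge 1} (m - 1/2)^{-s-1}.
\]
Here lies the main obstacle, and the reason the hypothesis sharpens from $s \in \MBZzerP$ to $s \in \MBZ^+$: the series $\sum_{m\ge1}(m-1/2)^{-s-1}$ converges only when $s + 1 > 1$, i.e. $s \ge 1$, in which case the folded tail is uniformly $O(N^{-s-1})$. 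Summing over the $O(N)$ retained modes then gives $\|\Pi_N f - I_N f\|^2 = O(N)\cdot O(N^{-2s-2}) = O(N^{-2s-1})$, whence $\|\Pi_N f - I_N f\| = O(N^{-s-1/2})$. Combining this with Theorem \ref{thm:11} through the triangle inequality yields the asserted bound \eqref{eq:FTS1AE1new1}.
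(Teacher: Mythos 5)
Your core argument is the same as the paper's and is sound, but you have misidentified the object being estimated. In this paper $E_Nf$ denotes the \emph{aliasing error} $I_Nf-\Pi_Nf$ (the discrepancy between interpolation and truncation caused by the folding $\tilde f_k=\hat f_k+\sum_{p\ne 0}\hat f_{k+pN}$), not the total interpolation error $f-I_Nf$. This is clear from the paper's proof, which works directly with $E_Nf-\hat f_{N/2}\phi_{N/2}=\pm\sum_{|k|\le N/2}'\sum_{p\ne0}\hat f_{k+pN}\phi_k$, and from Corollary \ref{cor:11}, which uses the orthogonal (Pythagorean) decomposition $\|f-I_Nf\|^2=\|f-\Pi_Nf\|^2+\|E_Nf\|^2$ — an identity that would be vacuous under your reading. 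Consequently, your ``second summand'' $\|\Pi_Nf-I_Nf\|$ \emph{is} the quantity the theorem is about, your bound on it is precisely the content of the paper's proof, and your appeal to Theorem \ref{thm:11} for the first summand is superfluous here (it reproduces Corollary \ref{cor:11} rather than Theorem \ref{thm:22}).

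On the substance of that bound, your route matches the paper's: derive the aliasing relation from the discrete orthogonality of the exponentials, invoke the coefficient decay of Theorem \ref{thm:00}, control the folded tails using $|k+mN|\ge(|m|-\tfrac12)N$, and sum over the $O(N)$ retained modes. Two small points of comparison. First, the paper isolates the endpoint contribution $\hat f_{N/2}\phi_{N/2}$ explicitly via the reverse triangle inequality and bounds its norm by $O(N^{-s-1})$ before recombining; your ``endpoint terms, lower-order once squared'' remark is correct but should be made quantitative to fully close the argument. Second, your estimate of the inner sum by $\sum_{m\ne0}|\hat f_{k+mN}|\le C\,N^{-s-1}\sum_{m\ge1}(m-\tfrac12)^{-s-1}$ is a clean and valid $\ell^1$ folding (and correctly explains why $s\ge1$ is needed for that series to converge), whereas the paper passes instead to $\sum_{p\ne0}|\hat f_{k+pN}|^2$ and ends with an explicit constant involving $\zeta(2s+2)$; your version sacrifices the explicit constant but is, if anything, easier to justify. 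With the notational correction and the endpoint term made explicit, your argument establishes the theorem.
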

\begin{proof}
Replacing $f$ in \eqref{DFP1} by its Fourier series yields
\begin{equation}\label{eq:ftildekMAR312021}
{\tilde f_k} = \frac{1}{N}\sum\limits_{j = 0}^{N - 1} {\left[ {\sum\limits_{l \in \MBZ} {{{\hat f}_l}{\phi _l}({x_j})} } \right]{\phi _{ - k}}({x_j})}  = \sum\limits_{l \in \MBZ} {{{\hat f}_l}\left[ {\frac{1}{N}\sum\limits_{j = 0}^{N - 1} {{\phi _{l - k}}({x_j})} } \right]}  = {\left[ {\sum\limits_{l \in \MBZ} {{{\hat f}_l}{\delta _{l - k,pN}}} } \right]_{\left| p \right| \in \MBZzer^ + }} = {\hat f_k} + \sum\limits_{p \in \MBZ_{\hcancel{0}}} {{{\hat f}_{k + pN}}} ,\quad \forall k \in {\MB{K}'_N}.
\end{equation}
Formula \eqref{eq:ftildekMAR312021} and the Triangle Difference Ineq. imply that
\begin{align}
&{\left| {\left\| {{E_Nf}} \right\| - \left\| {{{\hat f}_{N/2}}{\phi _{N/2}}} \right\|} \right|^2} \le {\left\| {{E_Nf} - {{\hat f}_{N/2}}{\phi _{N/2}}} \right\|^2} = \C{I}_T^{(x)}\left( {\sumd\sum\limits_{\left| k \right| \le N/2} {\sum\limits_{p \in \MBZ_{\hcancel{0}}} {{{\hat f}_{k + pN}}{\phi _k}} }  \cdot \sumd\sum\limits_{\left| l \right| \le N/2} {\sum\limits_{p \in \MBZ_{\hcancel{0}}} {\hat f_{l + pN}^*{\phi _{ - l}}} } } \right)\nonumber\\
&= \sumd\sum\limits_{\left| k \right| \le N/2} {\sumd\sum\limits_{\left| l \right| \le N/2} {\sum\limits_{p \in \MBZ_{\hcancel{0}}} {{{\hat f}_{k + pN}}\sum\limits_{p \in \MBZ_{\hcancel{0}}} {\hat f_{l + pN}^*} } } } \C{I}_T^{(x)}{\phi _{k - l}} = \sumd\sum\limits_{\left| k \right| \le N/2} {{{\left| {\sum\limits_{p \in \MBZ_{\hcancel{0}}} {{{\hat f}_{k + pN}}} } \right|}^2}{{\left\| {{\phi _k}} \right\|}^2}}  = T\sumd\sum\limits_{\left| k \right| \le N/2} {{{\left| {\sum\limits_{p \in \MBZ_{\hcancel{0}}} {{{\hat f}_{k + pN}}} } \right|}^2}}\nonumber\\
&\le T\sumd\sum\limits_{\left| k \right| \le N/2} {\sum\limits_{p \in \MBZ_{\hcancel{0}}} {{{\left| {{{\hat f}_{k + pN}}} \right|}^2}} }.\label{eq:proof1}
\end{align}
From Ineqs. \eqref{eq:fhatkMAR312021new100} and \eqref{eq:proof1}, we find that
\begin{align*}
&{\left| {\left\| {{E_Nf}} \right\| - \left\| {{{\hat f}_{N/2}}{\phi _{N/2}}} \right\|} \right|^2} \le T\sumd\sum\limits_{\left| k \right| \le N/2} {\sum\limits_{p \in \MBZ_{\hcancel{0}}} {{{\left| {{{\hat f}_{k + pN}}} \right|}^2}} } \le \frac{\left\| f^{(s)} \right\|_{BV}^2}{T}\sumd\sum\limits_{\left| k \right| \le N/2} {\sum\limits_{p \in \MBZ_{\hcancel{0}}} {{{\omega _{\left|k + pN\right|}}}^{ - 2s - 2}} }\\
&= \frac{\left\| f^{(s)} \right\|_{BV}^2}{T}\left[ 2 {\sum\limits_{k = 0}^{N/2} {{\sum\limits_{p \in \MBZ_{\hcancel{0}}} {{\omega _{\left|k + pN\right|}^{ - 2s - 2}}} }}  - \sum\limits_{p \in \MBZ_{\hcancel{0}}} \left(\omega _{\left| {pN} \right|}^{ - 2s - 2} + \omega _{\left| {N/2 + pN} \right|}^{ - 2s - 2}\right) } \right]\\
&\le \frac{\left\| f^{(s)} \right\|_{BV}^2}{T}\left[2\,{\omega_1^{-2s - 2}}\sum\limits_{k = 0}^{N/2} {\frac{1}{{{N^{2s + 2}}}}\sum\limits_{p \ge 1} {\left( {\frac{1}{{{{(p - 1/2)}^{2s + 2}}}}+\frac{1}{{{p^{2s + 2}}}}} \right)} }\right.\\
&\left. - \omega _N^{ - 2s - 2}\left( {\sum\limits_{p \ge 1} {\left( {\frac{1}{{{{(p - 1/2)}^{2s + 2}}}} + \frac{2}{{{p^{2s + 2}}}} + \frac{1}{{{{(p + 1/2)}^{2s + 2}}}}} \right)} } \right) \right]
\end{align*}
\begin{align*}
&= \frac{\left\| f^{(s)} \right\|_{BV}^2}{T}\left[(N + 2)\zeta (2s + 2)\omega _{N/2}^{ - 2s - 2} - \left( {2\zeta (2s + 2) - 1} \right)\omega _{N/2}^{ - 2s - 2} \right] = \frac{1}{\pi }\left( {\zeta (2s + 2) + \frac{1}{N}} \right) \left\| {{f^{(s)}}} \right\|_{BV}^2 \omega _{N/2}^{ - 2s - 1}\quad \forall N \in \MBZeP,
\end{align*}
where $\zeta$ is the Riemann zeta function. Therefore,
\[
{\left| {\left\| {{E_Nf}} \right\| - \left\| {{{\hat f}_{N/2}}{\phi _{N/2}}} \right\|} \right|} \le \sqrt {\frac{1}{\pi }\left( {\zeta (2s + 2) + \frac{1}{N}} \right)}\,{\left\| {{f^{(s)}}} \right\|_{BV}}\,\omega _{N/2}^{ - s - 1/2}\quad \forall N \in \MBZeP.
\]
Since $\displaystyle{\left\| {{{\hat f}_{N/2}}{\phi _{N/2}}} \right\| = \sqrt T \left| {{{\hat f}_{N/2}}} \right| \le \frac{{{\mkern 1mu} {{\left\| {{f^{(s)}}} \right\|}_{BV}}}}{{\sqrt T {\kern 1pt} \omega _{N/2}^{s+1}}}}$ by Ineq. \eqref{eq:fhatkMAR312021new100}, then 
\begin{equation}\label{eq:thm2ineqfor26Apr20211}
\left\| {{E_N}f} \right\| \le \left\| {{{\hat f}_{N/2}}{\phi _{N/2}}} \right\| + \left\| {{E_N}f - {{\hat f}_{N/2}}{\phi _{N/2}}} \right\| \le \frac{1}{{\sqrt \pi  }} \left( {\sqrt {\zeta (2s + 2) + \frac{1}{N}}  + \frac{1}{{\sqrt N }}} \right) {\left\| {{f^{(s)}}} \right\|_{BV}} \omega _{N/2}^{ - s - 1/2},
\end{equation}
whence the Asymptotic Formula \eqref{eq:FTS1AE1new1} is obtained.
\end{proof}
Since $\zeta(s) < \displaystyle{\frac{1}{{1 - {2^{1 - s}}}}}\,\forall s > 2$ \cite{Batir2008a}, then the aliasing error is roughly bounded by
\begin{equation}\label{eq:thm2ineqfor26Apr20222}
\left\| {{E_N}f} \right\| < \frac{1}{{\sqrt \pi  }}\left( {\sqrt {1 + \frac{1}{{{2^{2s + 1}} - 1}} + \frac{1}{N}}  + \frac{1}{{\sqrt N }}} \right){\left\| {{f^{(s)}}} \right\|_{BV}}\omega _{N/2}^{ - s - 1/2} \sim \sqrt {\frac{{{2^{2s + 1}}}}{{\pi \left( {{2^{2s + 1}} - 1} \right)}}} {\left\| {{f^{(s)}}} \right\|_{BV}}\omega _{N/2}^{ - s - 1/2},\quad \text{as } N \to \infty.
\end{equation}
Theorem \ref{thm:22} demonstrates that the aliasing error is comparable to the Fourier series truncation error. The next corollary shows that the Fourier interpolation error is comparable to the Fourier series truncation error.

\begin{cor}[Fourier interpolation error for nonsmooth and $T$-periodic functions]\label{cor:11}
Suppose that $f \in {{\C{H}_T^s}} \foralls s \in \MBZ^+$ is approximated by the $T$-periodic Fourier interpolant $I_Nf\,\foralls N \in \MBZeP$, then 
\begin{equation}\label{eq:FTS1AE1hi126Apr2021}
\left\| f - {{I_Nf}} \right\| = O\left( {{N^{-s-1/2}}} \right),\quad \text{as }N \to \infty.
\end{equation}
\end{cor}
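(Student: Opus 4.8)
The plan is to exploit the observation that the interpolant $I_N f$ differs from the truncated Fourier series $\Pi_N f$ by precisely the aliasing error $E_N f$, so that the interpolation error splits cleanly into two pieces that have already been controlled in the preceding results. First I would record the identity $E_N f = I_N f - \Pi_N f$, obtained by comparing the reduced-form interpolant \eqref{eq:FI1nn1} with the truncated series \eqref{eq:FTS1}: the coefficient discrepancy $\tilde f_k - \hat f_k = \sum_{p \in \MBZ_{\hcancel{0}}} \hat f_{k+pN}$ established in \eqref{eq:ftildekMAR312021} is exactly what $E_N f$ collects mode by mode. Rearranging this identity gives the additive decomposition $f - I_N f = \left(f - \Pi_N f\right) - E_N f$.

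Next I would apply the triangle inequality in the $L^2({\F{\Omega}_T})$-norm,
\begin{equation*}
\left\| f - I_N f \right\| \le \left\| f - \Pi_N f \right\| + \left\| E_N f \right\|,\quad \foralls N \in \MBZeP.
\end{equation*}
The first term on the right is bounded by Theorem \ref{thm:11}, which yields $\left\| f - \Pi_N f \right\| = O\left(N^{-s-1/2}\right)$, while the second term is bounded by Theorem \ref{thm:22}, which yields $\left\| E_N f \right\| = O\left(N^{-s-1/2}\right)$; both results are available here since the corollary is stated for $s \in \MBZ^+$, the common hypothesis of the two theorems. Adding two quantities of the same asymptotic order preserves that order, so the sum is again $O\left(N^{-s-1/2}\right)$, which is precisely the claimed estimate \eqref{eq:FTS1AE1hi126Apr2021}.

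I do not anticipate a genuine obstacle, as both constituent bounds are already in hand and the argument reduces to a single triangle-inequality step. The only point requiring mild care is the bookkeeping of the $k = \pm N/2$ modes: the reduced interpolant omits the duplicated top mode (recall $\tilde f_{N/2} = \tilde f_{-N/2}$ and the primed summation), whereas $\Pi_N f$ retains the term $\hat f_{N/2}\phi_{N/2}$, so I must confirm that the $E_N f$ appearing in Theorem \ref{thm:22} is the same object as $I_N f - \Pi_N f$ including this boundary term. Inspecting the proof of Theorem \ref{thm:22} settles this, since it already isolates $\hat f_{N/2}\phi_{N/2}$ and folds it back in through the triangle inequality. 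Once that identification is secured, no further estimation is needed and the corollary follows at once.
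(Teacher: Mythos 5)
Your proposal is correct and follows essentially the same route as the paper: both decompose the interpolation error into the truncation error $f-\Pi_N f$ and the aliasing error $E_N f$ and invoke Theorems \ref{thm:11} and \ref{thm:22}. The only difference is that you combine the two bounds via the triangle inequality, whereas the paper uses the orthogonality of the high modes ($\left|k\right|>N/2$) and low modes ($\left|k\right|\le N/2$) to write the exact identity $\left\|f-I_Nf\right\|^2=\left\|f-\Pi_Nf\right\|^2+\left\|E_Nf\right\|^2$, which yields a slightly sharper explicit constant but the same asymptotic order.
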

\begin{proof}
Ineqs. \eqref{eq:hello1} and \eqref{eq:thm2ineqfor26Apr20211} yield
\[{\left\| {f - {I_N}f} \right\|^2} = {\left\| {f - {\Pi _N}f} \right\|^2} + {\left\| {{E_N}f} \right\|^2} \le \frac{{\left\| {{f^{(s)}}} \right\|_{BV}^2}}{{(2s + 1)\pi \omega _{N/2}^{2s + 1}}} + \frac{1}{\pi }{\left( {\sqrt {\zeta (2s + 2) + \frac{1}{N}}  + \frac{1}{{\sqrt N }}} \right)^2}\left\| {{f^{(s)}}} \right\|_{BV}^2\omega _{N/2}^{ - 2s - 1}\]
\vspace{-5mm}
\begin{align}
\Rightarrow \left\| {f - {I_N}f} \right\| &\le \nu_{1,s,N} {\left\| {{f^{(s)}}} \right\|_{BV}}\omega _{N/2}^{ - s - 1/2}\label{eq:Sams1}\\
&< \nu_{2,s,N} {\left\| {{f^{(s)}}} \right\|_{BV}}\omega _{N/2}^{ - s - 1/2} \sim \nu_{3,s} {\left\| {{f^{(s)}}} \right\|_{BV}}\omega _{N/2}^{ - s - 1/2},\quad \text{as }N \to \infty,
\end{align}
where
\begin{align*}
\nu_{1,s,N} &= \sqrt {\frac{1}{\pi }\left[ {\frac{1}{{2s + 1}} + {{\left( {\sqrt {\zeta (2s + 2) + \frac{1}{N}}  + \frac{1}{{\sqrt N }}} \right)}^2}} \right]},\\
\nu_{2,s,N} &= \sqrt {\frac{1}{\pi }\left[ {\frac{1}{{2s + 1}} + {{\left( {\sqrt {1 + \frac{1}{{{2^{2s + 1}-1}}} + \frac{1}{N}}  + \frac{1}{{\sqrt N }}} \right)}^2}} \right]},\quad \text{and}\\
\nu_{3,s} &= \sqrt {\frac{1}{\pi }\left( {1 + \frac{1}{{2s + 1}} + \frac{1}{{{2^{2s+1}-1}}}} \right)}.
\end{align*}
\end{proof}

\begin{cor}[FPSQ error for nonsmooth and $T$-periodic functions]\label{cor:22}
Suppose that $f \in {{\C{H}_T^s}} \foralls s \in \MBZ^+$ is approximated by the $T$-periodic Fourier interpolant $I_Nf\,\foralls N \in \MBZeP$, then 
\begin{equation}\label{eq:FPSQEFATPF126Apr2021Hi1}
\left| {\C{I}_{{{\bm{x}_N}}}^{(x)}f - \Fthe f_{0:N - 1}} \right| = O\left( {{N^{-s-1/2}}} \bmone_N \right),\quad \text{as }N \to \infty.
\end{equation}
\end{cor}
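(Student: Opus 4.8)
The plan is to reduce the vector-valued quadrature error directly to the interpolation error already controlled in Corollary \ref{cor:11}. The crucial observation is that the integration matrix $\Fthe$ was \emph{constructed} so that $\Fthe f_{0:N-1}$ reproduces the exact integrals of the interpolant; that is, Eq. \eqref{eq:Toto13Feb1} gives $\C{I}_{{\bm{x}_N}}^{(x)}(I_N f) = \Fthe f_{0:N-1}$. Consequently, for each $l \in \MBJ_N$ the $l$th component of the error vector is simply the integral of the interpolation error,
\[
\C{I}_{x_{N,l}}^{(x)} f - \left(\Fthe f_{0:N-1}\right)_l = \C{I}_{x_{N,l}}^{(x)}(f - I_N f) = \int_0^{x_{N,l}} \left(f - I_N f\right)(x)\,dx.
\]
So first I would write the componentwise error in this form, which removes the matrix $\Fthe$ from the analysis entirely and recasts the problem as a pure integration-of-interpolation-error estimate.

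Next I would bound each component uniformly in $l$. Since $[0, x_{N,l}] \subseteq \F{\Omega}_T$ for every $l \in \MBJ_N$, monotonicity of the integral gives
\[
\left|\int_0^{x_{N,l}} (f - I_N f)\,dx\right| \le \C{I}_{\F{\Omega}_T}^{(x)} \left|f - I_N f\right| = \left\| f - I_N f\right\|_{L^1(\F{\Omega}_T)}.
\]
Applying the Cauchy--Schwarz inequality on $\F{\Omega}_T$, namely $\|g\|_{L^1(\F{\Omega}_T)} \le \sqrt{T}\,\|g\|$ (with $\sqrt{T} = \|1\|$), converts this $L^1$ bound into the $L^2$ norm already estimated in the preceding corollary. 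The key point is that the right-hand side no longer depends on $l$, so one and the same bound governs every entry of the error vector simultaneously.

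Finally I would invoke Corollary \ref{cor:11}, which asserts $\|f - I_N f\| = O(N^{-s-1/2})$ as $N \to \infty$ and in fact supplies the explicit constant $\nu_{1,s,N}$ together with the sharp rate $\omega_{N/2}^{-s-1/2}$. Chaining the two inequalities yields, for every $l \in \MBJ_N$,
\[
\left|\C{I}_{x_{N,l}}^{(x)} f - \left(\Fthe f_{0:N-1}\right)_l\right| \le \sqrt{T}\,\nu_{1,s,N}\,\left\| f^{(s)}\right\|_{BV}\,\omega_{N/2}^{-s-1/2},
\]
and since this bound is independent of $l$, collecting the components into the vector $\bmone_N$ delivers the stated Asymptotic Formula \eqref{eq:FPSQEFATPF126Apr2021Hi1}. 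The proof has essentially no hard step: the entire content is the exact-quadrature identity \eqref{eq:Toto13Feb1}, after which everything reduces to the already-established interpolation estimate. The only mild care needed is the \emph{uniformity} in $l$---ensuring a single constant controls all components so that the $O$-bound can legitimately be written as a scalar rate times $\bmone_N$---which is secured precisely because each partial integral over $[0,x_{N,l}]$ is dominated by the full-period $L^1$ norm, with the extremal case $l = N-1$ (and the trivial vanishing case $l = 0$) causing no difficulty.
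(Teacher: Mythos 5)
Your proposal is correct and follows essentially the same route as the paper: both exploit the exact identity $\C{I}_{{\bm{x}_N}}^{(x)}(I_Nf) = \Fthe f_{0:N-1}$ to reduce the quadrature error to $\C{I}_{x_{N,l}}^{(x)}(f - I_Nf)$, then apply Cauchy--Schwarz and invoke Corollary \ref{cor:11}. The only (immaterial) difference is that the paper applies Cauchy--Schwarz on each subinterval $[0,x_{N,l}]$, retaining the componentwise factor $\sqrt{\bm{x}_N}$, whereas you pass to the full-period $L^1$ norm and obtain the slightly looser uniform constant $\sqrt{T}$; the asymptotic rate is identical.
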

\begin{proof}
The Triangle Ineq. implies
\begin{align}
\left| {\C{I}_{{{\bm{x}_N}}}^{(x)}f - \Fthe f_{0:N - 1}} \right| &= \left| {\C{I}_{{{\bm{x}_N}}}^{(x)}f - \C{I}_{{{\bm{x}_N}}}^{(x)}(I_Nf) + \C{I}_{{{\bm{x}_N}}}^{(x)}(I_Nf) - \Fthe f_{0:N - 1}} \right| \le \left| {\C{I}_{{{\bm{x}_N}}}^{(x)}f - \C{I}_{{{\bm{x}_N}}}^{(x)}(I_Nf)}\right| + \left|{\C{I}_{{{\bm{x}_N}}}^{(x)}(I_Nf) - \Fthe f_{0:N - 1}} \right|.\quad\label{eq:pointhere1}
\end{align}
The proof is established through Ineqs. \eqref{eq:Sams1} and \eqref{eq:pointhere1} together with Schwarz's Ineq. by realizing that
\begin{align*}
&\left| {\C{I}_{{{\bm{x}_N}}}^{(x)}f - \Fthe f_{0:N - 1}} \right| \le \left| {\C{I}_{{{\bm{x}_N}}}^{(x)}f - \C{I}_{{{\bm{x}_N}}}^{(x)}(I_Nf)}\right| + \left|{\C{I}_{{{\bm{x}_N}}}^{(x)}(I_Nf) - \Fthe f_{0:N - 1}} \right| \le \left\| f - {{I_Nf}} \right\| \sqrt{\bm{x}_N}\\
&\le \nu_{1,s,N} {\left\| {{f^{(s)}}} \right\|_{BV}}\omega _{N/2}^{ - s - 1/2} \sqrt{\bm{x}_N} < \nu_{2,s,N} {\left\| {{f^{(s)}}} \right\|_{BV}}\omega _{N/2}^{ - s - 1/2} \sqrt{\bm{x}_N} \sim \nu_{3,s} {\left\| {{f^{(s)}}} \right\|_{BV}}\omega _{N/2}^{ - s - 1/2} \sqrt{\bm{x}_N},\quad \text{as }N \to \infty . 
\end{align*}
\end{proof}
\noindent Corollary \ref{cor:22} shows that
\begin{subequations}
\begin{align}
\left\| {\C{I}_{{{\bm{x}_N}}}^{(x)}f - \Fthe f_{0:N - 1}} \right\|_2 &\le \nu_{1,s,N} {\left\| {{f^{(s)}}} \right\|_{BV}}\omega _{N/2}^{ - s - 1/2}\,\left\|\sqrt{\bm{x}_N}\right\|_2\label{eq:UPPa}\\
&< \nu_{2,s,N} {\left\| {{f^{(s)}}} \right\|_{BV}}\omega _{N/2}^{ - s - 1/2}\,\left\|\sqrt{\bm{x}_N}\right\|_2\label{eq:UPPb}\\
&\sim \nu_{3,s} {\left\| {{f^{(s)}}} \right\|_{BV}}\omega _{N/2}^{ - s - 1/2}\,\left\|\sqrt{\bm{x}_N}\right\|_2,\quad \text{as }N \to \infty.\label{eq:UPPc} 
\end{align}
\end{subequations}
We refer to the upper bounds \eqref{eq:UPPa}, \eqref{eq:UPPb}, and \eqref{eq:UPPc} by the ``FPSQ-NSTP error norm upper bound,'' ``relaxed FPSQ-NSTP error norm upper bound,'' and ``asymptotic FPSQ-NSTP error norm upper bound,'' respectively, or collectively by the ``FPSQ error norm upper bounds for nonsmooth and $T$-periodic functions.'' 
Note that each of the error factors $\nu_{1,s,N}, \nu_{2,s,N}$, and $\nu_{3,s}$ is a monotonically decreasing function for increasing values of $s$, indicating that the smoother the function, the faster the error convergence rate. All quadrature error upper bounds demonstrate that FPSQ approximation for nonsmooth and $T$-periodic functions has ``a polynomial order accuracy.''

To sense how the smoothness of the function can influence the FPSQ error, consider the FPSQ approximations of the periodic extensions of the five polynomial functions $\displaystyle{f_s \in \C{H}_1^s}$, $s = 1,2,\ldots, 5$, defined by 
\begin{gather*}
{f_1}(x) = x (1-x),\quad {f_2}(x) = \frac{1}{3}{x^3} - \frac{1}{2}{x^2} + \frac{1}{6}x + 1,\quad {f_3}(x) = {x^4} - 2{x^3} + {x^2},\quad {f_4}(x) = - {x^5} + \frac{{15}}{6}{x^4} - \frac{5}{3}{x^3} + \frac{1}{6}x + 2,\\
\text{and }{f_5}(x) = - {x^6} + 3{x^5} - \frac{5}{2}{x^4} + \frac{1}{2}{x^2} - 1.
\end{gather*}
The periodic extension of each function $f_s$ exhibits a jump discontinuity in the $s$th derivative for $s = 1:5$. Therefore, Corollary \ref{cor:22} anticipates the decay rate of the FPSQ error of each function $f_s$ to be $O\left( {{N^{-s-1/2}}} \right)$ as $N \to \infty\,\forall s = 1:5$. Figure \ref{fig:FAccuracy2} shows the infinity- and Euclidean-error norms in log-lin scale of the FPSQ approximations of the five periodic functions. We can clearly observe that the calculated FPSQ errors remain below the estimated upper bounds in all cases with faster decay rates for smoother functions, which is consistent with Corollary \ref{cor:22}. The figure also shows the fitted curves of the FPSQ Euclidean-error norms in the power function model form $y = b x^m$. Taking the natural logarithm of both sides of the equation gives the equivalent log-log regression model $\ln y = m \ln x + \ln b$, which has the form of the linear regression model $Y = m X + B$ using the change of variables $X = \ln x$ and $Y = \ln y$ and the parameter substitution $B = \ln b$. The parameters $m$ and $b$ of the latter model were obtained using MATLAB polynomial curve fitting function ``polyfit'' with the observed data $\left\{ {({X_i},{Y_i})} \right\}_{i = 1}^{10} = \left\{\ln N_i,\ln \left\| {\C{I}_{{{\bm{x}_{N_i}}}}^{(x)}f - \Fthe f_{0:{N_i} - 1}} \right\|\right\}_{i=0}^{10}\,\forall\,N_i = 10 (10 + i), i = 1:10$. The estimated $m$ values were $-1.5, -2.5, -3.5, -4.5$, and $-5.57$ in close agreement with Corollary \ref{cor:22}.

\begin{figure}[ht]
\centering
\includegraphics[scale=0.34]{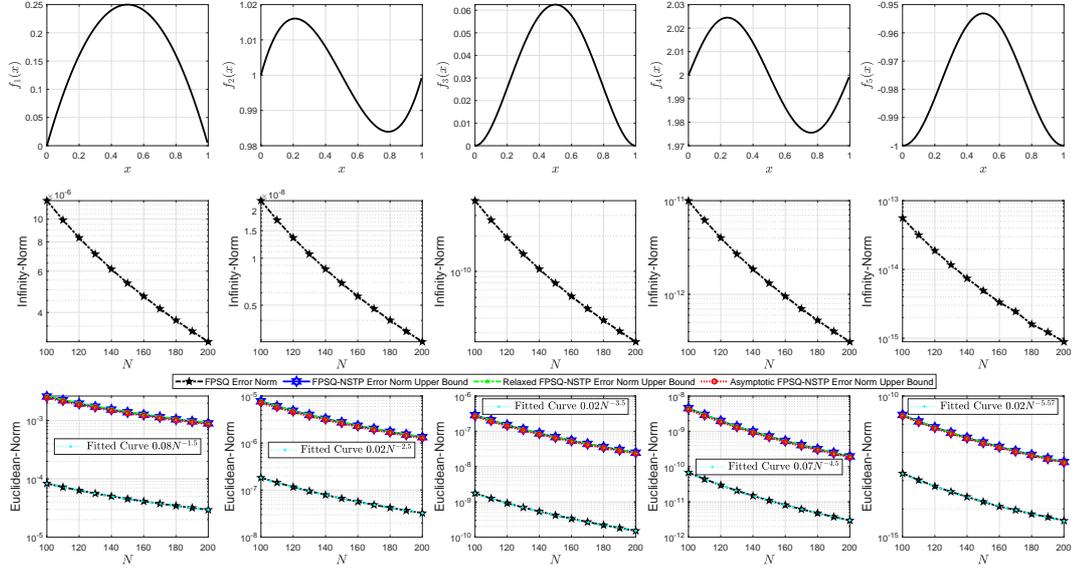}
\caption{The first row shows the five polynomial functions $\displaystyle{{f_1}(x) = x (1-x), {f_2}(x) = \frac{1}{3}{x^3} - \frac{1}{2}{x^2} + \frac{1}{6}x + 1, {f_3}(x) = {x^4} - 2{x^3} + {x^2}}$, $\displaystyle{{f_4}(x) = - {x^5} + \frac{{15}}{6}{x^4} - \frac{5}{3}{x^3} + \frac{1}{6}x + 2}$, and $\displaystyle{{f_5}(x) = - {x^6} + 3{x^5} - \frac{5}{2}{x^4} + \frac{1}{2}{x^2} - 1}$ on the interval $[0, 1]$. The infinity- and Euclidean-error norms in log-lin scale of the FPSQ approximations of each function are shown in the second- and -third rows of the same column, respectively, against $N = 100(10)200$. All FPSQ error norm upper bounds are shown in the last row of plots as well as the fitted curve of the FPSQ Euclidean-error norms obtained using MATLAB.}
\label{fig:FAccuracy2}
\end{figure}

To sense how the lack of continuity of the function can influence the FPSQ error, consider the FPSQ approximations of the the square wave function $f_6 \in \C{H}_{1}^{1}$ defined in one period by
\begin{empheq}[left={{f_6}(x) =} \empheqbiglbrace]{align*}
  1, &\quad 0 \le x < \frac{1}{2} \vee x = 1,\\
  0, &\quad \frac{1}{2} \le x < 1.
\end{empheq}
The function and its $N/2$-degree, $1$-periodic Fourier interpolant for $N = 10, 20, 40$, and $80$ are depicted in Figure \ref{fig:FAccuracy3}. We can observe spurious oscillations throughout most of the interval with large peaks near the jump discontinuities, giving rise to the well-known Gibbs phenomenon, which often occurs in Fourier series expansions and interpolations of discontinuous data. Adding more terms to the Fourier interpolant slowly decreases the error away from the discontinuities; however, the over- and undershoots remain visible to the naked eye and ultimately compress into a single vertical line at the points of discontinuity as $N \to \infty$. The largest amount of over- or undershoot in the Fourier interpolant $I_N(x)\,\forall\,N = 10(10)80$ when evaluated at $10,000$ equally spaced nodes between $0$ and $1$ are approximately $12.3\%, 14.3\%, 13.9\%, 14.2\%, 14.0\%, 14.1\%, 14.1\%$, and $14.1\%$ of the jump size, respectively. These over- and undershoot factors are slightly larger than the classical asymptotic overshoot factor in the Fourier expansion series, which approaches $8.95\%$ of the jump size. The contrast between the over- and undershoot factors associated with Fourier interpolation and that of the classical Gibbs phenomenon is not surprising because Fourier interpolation computations in finite-precision arithmetic are subject to aliasing and round-off errors. In fact, it was discovered more than a quarter century ago that the behavior of the Fourier interpolant near an isolated jump discontinuity point $\xi$ of a function depends on the position of $\xi$ with respect to the interpolation nodes considered \cite{HELMBERG199441}. In addition, Figure \ref{fig:FAccuracy3} shows the magnitude of the symmetric discrete Fourier interpolation coefficients against their indices, where the coefficients with even indices vanish. The fitted curves of the discrete data $\left\{\left(1:2:\frac{N}{2}-1,\left|\tilde f_{1:2:\frac{N}{2}-1}\right|\right)\right\}$ using the power function model $y = b x^m$ indicate that the coefficients decay like $O\left(k^{-1}\right)$ as indicated by Theorem \ref{thm:00}. The FPSQ error Euclidean-norm decays as $O\left(N^{-1/2}\right)$ as revealed by another curve fit obtained using MATLAB. This result agrees with the Asymptotic Formula \eqref{eq:FPSQEFATPF126Apr2021Hi1}, although the theoretical proof of Corollary \ref{cor:22} does not apply to this case.

\begin{figure}[ht]
\centering
\includegraphics[scale=0.35]{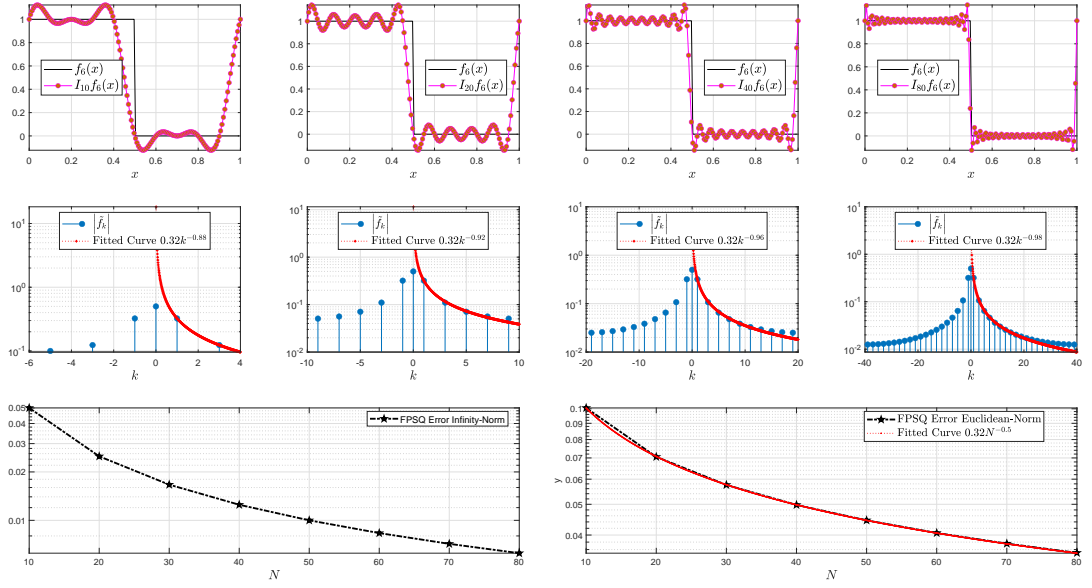}
\caption{The square wave function $f_6$ and its $N/2$-degree, $T$-periodic Fourier interpolant $I_N(x)\,\forall\,N = 10, 20, 40$, and $80$. The plots of $I_N(x)$ were
produced using 150 linearly spaced nodes between $0$ and $1$. The second row shows stem plots of the Fourier interpolation coefficients magnitudes against their indices in log-lin scale together with the fitted curves of the odd-indexed right half discrete data obtained through MATLAB. The last row displays the FPSQ error infinity- and -Euclidean norms in log-lin scale in addition to the fitted curve obtained from the latter experimental error data.}
\label{fig:FAccuracy3}
\end{figure}
The variance between the asymptotic overshoot factors of the Fourier interpolants and those of the Fourier expansion series is not the only distinctive difference in their behaviors. In fact, there is another remarkable difference that seemingly did not attract much attention in the literature: their behavior at a jump discontinuity. In the next section we shed some light on this subject.

\subsubsection{Behavior of Fourier Interpolants at Jump Discontinuities}
\label{sec:BOFIAAJD1}
Although it is well known that the classical Fourier expansion series converges to the average value of the left- and right-hand limits of the function at a jump discontinuity point $\xi$, to the best of our knowledge, the behavior of the Fourier interpolant at $\xi$ has not been investigated clearly in the literature. We confined our study to $T$-periodic piecewise constant functions with two jump discontinuity points in ${\F{\Omega}_T}$ and studied the behavior of Fourier interpolants of their periodic extensions on $\MBRzerP$ at the jump discontinuities. This study was motivated by the fact that the OC of Problem $\C{P}$ is a bang–bang controller that switches abruptly between two states at two switching times \cite{bayen2018optimal}. We refer to the jump discontinuity points in $(0, T)$ and at the boundary point $x = T$ as the interior and boundary jump discontinuity points, respectively. Now, consider the $1$-periodic, piecewise constant functions $f_j \in \C{H}_1^0\,\forall\,j = 6:12$, where $f_7, \ldots, f_{12}$ are given by
\[\begin{array}{l}
{f_7}(x) = \left\{ \begin{array}{l}
1,\quad 0 \le x < \frac{1}{3},\\
0,\quad \frac{1}{3} \le x < 1,
\end{array} \right.\;{f_8}(x) = \left\{ \begin{array}{l}
4,\quad 0 \le x < \frac{2}{3},\\
0,\quad \frac{2}{3} \le x < 1,
\end{array} \right.\;{f_{9}}(x) = \left\{ \begin{array}{l}
-5,\quad 0 \le x < 0.8183,\\
0,\quad 0.8183 \le x < 1,
\end{array} \right.\\\\
{f_{10}}(x) = \left\{ \begin{array}{l}
2,\quad 0 \le x < \frac{\pi }{5},\\
1,\quad \frac{\pi}{5} \le x < \frac{\pi }{4},\\
2,\quad \frac{\pi }{4} \le x < 1,
\end{array} \right.\;{f_{11}}(x) = \left\{ \begin{array}{l}
10,\quad 0 \le x < \frac{e}{5},\\
-2,\quad \frac{e}{5} \le x < \frac{e}{3},\\
10,\quad \frac{e}{3} \le x < 1.
\end{array} \right.\;\text{and }{f_{12}}(x) = \left\{ \begin{array}{l}
-3,\quad 0 \le x < \ln 1.5,\\
2,\quad \ln 1.5 \le x < \ln 2,\\
-3,\quad \ln 2 \le x < 1.
\end{array} \right.
\end{array}\]
The set of functions $\{f_{7:9}\}$ is created by various translations of the interior jump discontinuity point or different scaling of the square wave function $f_6$; thus, each function has exactly one interior jump discontinuity point and one boundary jump discontinuity point in ${\F{\Omega}_1}$. The remaining functions have both jump discontinuity points of the interior type. Table \ref{tab:OFICAAPOD1} lists the observed Fourier interpolant approximations at the interior jump discontinuity points of each function for several increasing values of $N$. For $f_6$, the interpolant converges to zero at $\xi = 1/2\,\forall N$ because $\xi$ always belongs to the set of interpolation nodes. However, each of the interpolant values of $f_7$ and $f_8$ appear to swing back and forth among three different limiting values, including the zero value when $\xi$ conforms to one of the interpolation nodes. Interestingly, the sum of each of the three interpolant values at the jump discontinuity point corresponding to any three consecutive $N$ values having a fixed step size of power of $10$ (nearly) equals the jump size at $\xi \foralle$ function! For the remaining functions, the interpolants did not show any signs of convergence to a single or even to a finite set of multiple values. In all cases where the jump discontinuity point $\xi$ does not coincide with any interpolation node, the value of the Fourier interpolant falls within the open interval $\left(f(\xi^-),f(\xi^+)\right)$, where $\xi^-$ and $\xi^+$ are points infinitesimally to the left and right of $\xi$, respectively. These results indicate that the Fourier interpolant diverges at $\xi$, except when the jump discontinuity point $\xi$ coincides with an interpolation node, where the Fourier interpolant matches the value of the discontinuous function according to the interpolation condition. 

\begin{table}[ht]
\caption{Observed Fourier interpolant values at a discontinuity point rounded to 4 decimal digits.} 
\centering 
\resizebox{1\columnwidth}{!}{%
\begin{tabular}{*{11}{c}} 
\hline\hline 
$N$ & $I_Nf_6(1/2)$ & $I_Nf_7(1/3)$ & $I_Nf_8(2/3)$ & $I_Nf_9(0.8183)$ & $I_Nf_{10}(\pi/5)$ & $I_Nf_{10}(\pi/4)$ & $I_Nf_{11}(e/5)$ & $I_Nf_{11}(e/3)$ & $I_Nf_{12}(\ln 1.5)$ & $I_Nf_{12}(\ln 2)$\\ [0.5ex] 
\hline 
100 & 0 & 0.6887 & 1.2253 & -0.7210 & 1.1430 & 1.5554 & 5.8852 & 5.5461 & -0.2099 & 0.5742\\
200 & 0 & 0.3101 & 2.7694 & -1.5989 & 1.3162 & 1.0614 & 0.8580 & 0.2428 & -2.6403 & -1.2379\\
300 & 0 & 0.0000 & 0.0000 & -2.5714 & 1.5021 & 1.6352 & 9.1071 & 8.3030 & 0.2979 & -2.7950\\
400 & 0 & 0.6905 & 1.2332 & -3.5372 & 1.6956 & 1.1289 & 4.5168 & 3.1217 & -2.2202 & 0.8602\\
500 & 0 & 0.3094 & 2.7663 & -4.3962 & 1.8709 & 1.7281 & -0.2953 & -1.5870 & 0.8087 & -0.9315\\
600 & 0 & 0.0000 & 0.0000 & -0.0705 & 1.0062 & 1.2070 & 8.0419 & 6.1515 & -1.7527 & -2.5647\\
700 & 0 & 0.6907 & 1.2343 & -0.7931 & 1.1475 & 1.8107 & 3.1736 & 0.8337 & 1.2771 & 1.1385\\
800 & 0 & 0.3092 & 2.7656 & -1.6890 & 1.3231 & 1.2936 & -1.3221 & 8.8158 & -1.2407 & -0.6059\\
900 & 0 & 0.0000 & 0.0000 & -2.6705 & 1.5145 & 1.8857 & 6.8532 & 3.7869 & 1.6919 & -2.3100\\
1000 & 0 & 0.6908 & 1.2347 & -3.6347 & 1.7066 & 1.3811 & 1.8420 & -1.1351 & -0.7053 & 1.4047\\
1100 & 0 & 0.3092 & 2.7652 & -4.4817 & 1.8786 & 1.9541 & 9.8124 & 6.7762 & -2.9592 & -0.2798\\
1200 & 0 & 0.0000 & 0.0000 & -0.1455 & 1.0125 & 1.4733 & 5.5581 & 1.4333 & -0.1601 & -2.0377\\
1300 & 0 & 0.6909 & 1.2349 & -0.8961 & 1.1549 & 1.0124 & 0.5913 & 9.2934 & -2.5944 & 1.6498\\
1400 & 0 & 0.3091 & 2.7650 & -1.8090 & 1.3326 & 1.5675 & 8.8764 & 4.4389 & 0.3703 & 0.0462\\
1500 & 0 & 0.0000 & 0.0000 & -2.7937 & 1.5265 & 1.0751 & 4.2180 & -0.6410 & -2.1657 & -1.7483\\
2000 & 0 & 0.3090 & 2.7648 & -1.9186 & 1.3424 & 1.8277 & 6.5636 & -0.1144 & 1.7384 & 0.6675\\
3000 & 0 & 0.0000 & 0.0000 & -0.3859 & 1.0324 & 1.1638 & -1.7335 & 1.0356 & -1.1057 & -0.1592\\
4000 & 0 & 0.6910 & 1.2354 & -4.1556 & 1.7555 & 1.6077 & 2.2617 & 2.2785 & 1.4404 & -1.0144\\
5000 & 0 & 0.3090 & 2.7645 & -2.5008 & 1.3923 & 1.9936 & 6.9655 & 3.5769 & -1.4988 & -1.8295\\
10000 & 0 & 0.6911 & 1.2356 & 0.0000 & 1.8453 & 1.9870 & 3.1116 & 9.4607 & 0.3689 & -0.3355\\ [1ex] 
\hline 
\end{tabular}
}
\label{tab:OFICAAPOD1} 
\end{table}

\subsection{Error and Convergence Analyses of Barycentric SG Quadratures}
\label{subsubsub:ECABSGQ1}
Let ${\left\|g\right\|_{{\infty},\F{\Gamma}_k}} = {\left\| {}_k g \right\|_{\infty}} = \sup {\left| {g\left(t^{(k)}\right)} \right|}\,\foralla g \in \MBF \cap \Def{\F{\Gamma}_k}, k \in \MBK_K$. The following two theorems underline the SG quadrature truncation error and its bounds on any partition $\F{\Gamma}_k$. Their proofs can be immediately derived from (\cite[Proofs of Theorems 4.1 and 4.2]{Elgindy20172})
by replacing the notations $m_k, \alpha_i^{(k),*}, {\hat z_{{m_k},i,j}^{(k),\alpha}}$ with $N_k, \alpha, \hat t_{N_k,j}^{(k),\alpha}$, respectively.

\begin{thm}\label{sec:erranalysgp1}
Let $N_k, M_k \in \mathbb{Z}_0^+$, and consider any arbitrary integration nodes set $\{y_{M_k, 0:M_k}\} \subset \F{\Gamma}_k\,\forall k \in \MBK_K$. Suppose also that ${}_k g \in C^{N_k + 1}(\F{\Gamma}_k)$ is approximated by Formula \eqref{eq:Sel1} with the associated discrete interpolation coefficients given by Formula \eqref{sec:ort:eq:sgt}, $\foralls g \in \MBF$. Then $\exists\,\{\zeta_{M_k,0:M_k}\} \subset  \Int{\F{\Gamma}_k}$ such that
\begin{equation}
\C{I}_{{\tau _{k - 1}}, {y_{M_k, i}}}^{\left(t^{(k)}\right)} g = {}_k\F{P}_i\,g_{0:N_k} + E_{N_k}^{\left( {\alpha} \right)}\left( {{y_{M_k, i}},\zeta_{M_k,i}} \right) ,
\end{equation}
where $g_{0:N_k}^t = g\left(\hat t_{N_k,0:N_k}^{(k),\alpha}\right)$, 
\begin{equation}
	E_{N_k}^{\left( {\alpha} \right)}\left( {{y_{M_k, i}},\zeta_{M_k,i}} \right) = \frac{{{g^{({N_k} + 1)}}\left( {\zeta_{M_k,i}} \right)}}{{({N_k} + 1)!{\mkern 1mu} K_{k,{N_k} + 1}^{\left( {\alpha} \right)}}} \C{I}_{\tau _{k - 1}, {y_{M_k, i}}}^{\left(t^{(k)}\right)} {\hat G_{k, {N_k} + 1}^{\left( {\alpha} \right)}},
\end{equation}
is the truncation error of the SG quadrature $\foralle i, k$,
\begin{equation}
K_{k,j}^{(\alpha )} = \frac{{{2^{2j - 1}}}}{{{{|\F{\Gamma}_k|}^j}}}\frac{{\Gamma \left( {2\alpha  + 1} \right)\Gamma \left( {j + \alpha } \right)}}{{\Gamma \left( {\alpha  + 1} \right)\Gamma \left( {j + 2\alpha } \right)}}\quad \forall j \in \mathbb{Z}_0^+,
\end{equation}
is the leading coefficient of the $(j,k)$-SG polynomial, and $\Gamma$ is the usual Gamma function.
\end{thm}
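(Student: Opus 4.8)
The plan is to transcribe, up to notation, the interpolatory-quadrature error argument of \cite[Theorems~4.1 and 4.2]{Elgindy20172}, since the SG apparatus on a single element $\F{\Gamma}_k$ is structurally identical to the Gegenbauer apparatus employed there. The starting observation is that the SGG interpolant $P_{N_k}g$ of Eq.~\eqref{eq:Sel1} matches ${}_kg$ at exactly the $N_k+1$ nodes $\hat t_{N_k,0:N_k}^{(k),\alpha}$, which are the zeroes of $\hat G_{k,N_k+1}^{(\alpha)}$. I would therefore invoke the classical polynomial interpolation remainder: for ${}_kg \in C^{N_k+1}(\F{\Gamma}_k)$ and each $t^{(k)} \in \F{\Gamma}_k$ there is an $\eta_t \in \Int{\F{\Gamma}_k}$ with
\begin{equation*}
{}_kg\left(t^{(k)}\right) - P_{N_k}g\left(t^{(k)}\right) = \frac{g^{(N_k+1)}(\eta_t)}{(N_k+1)!}\prod_{l=0}^{N_k}\left(t^{(k)} - \hat t_{N_k,l}^{(k),\alpha}\right).
\end{equation*}

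Next I would identify the nodal polynomial with the SG polynomial. Since the $\hat t_{N_k,l}^{(k),\alpha}$ are precisely the $N_k+1$ roots of $\hat G_{k,N_k+1}^{(\alpha)}$ and $K_{k,N_k+1}^{(\alpha)}$ is its leading coefficient, the factors $\prod_{l=0}^{N_k}\left(t^{(k)} - \hat t_{N_k,l}^{(k),\alpha}\right)$ and $\hat G_{k,N_k+1}^{(\alpha)}/K_{k,N_k+1}^{(\alpha)}$ are two monic polynomials of degree $N_k+1$ sharing the same roots, hence equal. This step only requires confirming the stated closed form of $K_{k,j}^{(\alpha)}$, which is the leading coefficient of the Gegenbauer polynomial standardised in \cite{Doha199075} multiplied by $(2/|\F{\Gamma}_k|)^{j}$ to account for the affine change of variable $t^{(k)}\mapsto(t^{(k)}-\tau_k^+)/\tau_k^-$; this is a direct computation. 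Integrating the remainder over $[\tau_{k-1},y_{M_k,i}]$, and using $\C{I}_{\tau_{k-1},y_{M_k,i}}^{(t^{(k)})}P_{N_k}g = {}_k\F{P}_i\,g_{0:N_k}$ — which follows by integrating the Lagrange form \eqref{sec:ort:eq:Lagint1} termwise and recognising the $i$th SGIM row from its rational-form definition \eqref{eq:catmicerat1} — then yields
\begin{equation*}
\C{I}_{\tau_{k-1},y_{M_k,i}}^{(t^{(k)})}g - {}_k\F{P}_i\,g_{0:N_k} = \frac{1}{(N_k+1)!\,K_{k,N_k+1}^{(\alpha)}}\,\C{I}_{\tau_{k-1},y_{M_k,i}}^{(t^{(k)})}\left[g^{(N_k+1)}(\eta_t)\,\hat G_{k,N_k+1}^{(\alpha)}\right].
\end{equation*}

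The remaining task is to pull the factor $g^{(N_k+1)}$ outside the integral as a single value $g^{(N_k+1)}(\zeta_{M_k,i})$ with $\zeta_{M_k,i}\in\Int{\F{\Gamma}_k}$, which reproduces the claimed error $E_{N_k}^{(\alpha)}(y_{M_k,i},\zeta_{M_k,i})$ and, as a consistency check, makes the quadrature exact on $\mathbb{P}_{N_k}$ (there $g^{(N_k+1)}\equiv0$). I would achieve this by rewriting $g^{(N_k+1)}(\eta_t)/(N_k+1)!$ as the confluent divided difference $g[\hat t_{N_k,0}^{(k),\alpha},\ldots,\hat t_{N_k,N_k}^{(k),\alpha},t^{(k)}]$, which is a continuous function of $t^{(k)}$ on $\F{\Gamma}_k$, and then applying a mean value theorem for integrals followed by the intermediate value theorem for $g^{(N_k+1)}$, exactly as in \cite{Elgindy20172}.

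The main obstacle is precisely this last extraction. Because the upper endpoint $y_{M_k,i}$ ranges freely over $(\tau_{k-1},\tau_k]$, the kernel $\hat G_{k,N_k+1}^{(\alpha)}$ generally \emph{changes sign} on $[\tau_{k-1},y_{M_k,i}]$, so the first mean value theorem for integrals cannot be applied verbatim through sign-definiteness of the weight. The delicate point, inherited unchanged from \cite{Elgindy20172}, is to secure the existence of $\zeta_{M_k,i}$ through the continuity of the divided-difference representation and the intermediate value theorem rather than through a positive weight. Once that argument is granted on $[-1,1]$, the substitutions $m_k\to N_k$, $\alpha_i^{(k),*}\to\alpha$, and $\hat z_{m_k,i,j}^{(k),\alpha}\to\hat t_{N_k,j}^{(k),\alpha}$ transport it to every $\F{\Gamma}_k$ without further work, which is what the statement asserts.
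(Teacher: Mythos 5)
Your proposal is correct and takes essentially the same route as the paper, which gives no independent argument but simply states that the proof follows from \cite[Proofs of Theorems 4.1 and 4.2]{Elgindy20172} after the notation substitution $m_k \to N_k$, $\alpha_i^{(k),*} \to \alpha$, $\hat z_{m_k,i,j}^{(k),\alpha} \to \hat t_{N_k,j}^{(k),\alpha}$. Your reconstruction — interpolation remainder, identification of the monic nodal polynomial with $\hat G_{k,N_k+1}^{(\alpha)}/K_{k,N_k+1}^{(\alpha)}$, termwise integration against the SGIM row, and the divided-difference/mean-value extraction of $g^{(N_k+1)}(\zeta_{M_k,i})$ — is exactly the cited argument transported to $\F{\Gamma}_k$, and you correctly flag the sign-changing kernel as the one step that must be inherited from that reference rather than from the first mean value theorem.
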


\begin{thm}\label{thm:Jan212022}
Let ${\left\| {{{}_k g^{({N_k} + 1)}}} \right\|_{\infty}} = A_k \in \MBRzerP\, \forall k \in \MBK_K$, where the constant $A_k$ depends on $k$ but is independent of $N_k$. Suppose also that the assumptions of Theorem \ref{sec:erranalysgp1} hold true. Then there exist some constants ${D^{\left( \alpha \right)}} > 0, B_{1,k}^{\left( \alpha \right)} = {A_k}{D^{\left( \alpha \right)}}$, and $B_2^{\left( \alpha \right)} > 1$, which depend on $\alpha$ but are independent of $N_k$, such that the SG quadrature truncation error, $E_{{N_k}}^{\left( \alpha \right)}\left( {y_{M_k, i},\zeta_{M_k,i}} \right)$, is bounded by
\begin{equation}
	\begin{array}{l}
	{\left\| {E_{{N_k}}^{\left( \alpha \right)}\left( {y_{M_k, i},\zeta_{M_k,i}} \right)} \right\|_{{{\infty}},\F{\Gamma}_k}} =  B_{1,k}^{\left( \alpha \right)}\,{2^{ - 2{N_k} - 1}}{{{e}}^{{N_k}}}{N_k}^{\alpha - {N_k} - \frac{3}{2}}\left( {y_{M_k, i} - {\tau _{k - 1}}} \right){{|\F{\Gamma}_k|} ^{{N_k} + 1}} \times \\
	\left( {\left\{ \begin{array}{l}
	1,\quad {N_k} \ge 0 \wedge \alpha \ge 0,\\
	\displaystyle{\frac{{\Gamma \left( {\frac{{{N_k}}}{2} + 1} \right)\Gamma \left( {\alpha + \frac{1}{2}} \right)}}{{\sqrt \pi\,\Gamma \left( {\frac{{{N_k}}}{2} + \alpha + 1} \right)}}},\quad N_k \in \MBZOP \wedge  - \frac{1}{2} < \alpha < 0,\\
	\displaystyle{\frac{{2\Gamma \left( {\frac{{{N_k} + 3}}{2}} \right)\Gamma \left( {\alpha + \frac{1}{2}} \right)}}{{\sqrt \pi  \sqrt {\left( {{N_k} + 1} \right)\left( {{N_k} + 2\alpha + 1} \right)}\,\Gamma \left( {\frac{{{N_k} + 1}}{2} + \alpha} \right)}}},\quad N_k \in \MBZzereP \wedge  - \frac{1}{2} < \alpha < 0,\\
	B_2^{\left( \alpha \right)} {\left( {{N_k} + 1} \right)^{ - \alpha}},\quad {N_k} \to \infty  \wedge  - \frac{1}{2} < \alpha < 0
	\end{array} \right.} \right),\quad \forall i \in \MBJ_{M_k}^+, k \in \MBK_K.
	\end{array}
\end{equation}
\end{thm}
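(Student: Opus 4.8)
The plan is to start from the exact truncation-error representation of Theorem \ref{sec:erranalysgp1},
\[
E_{N_k}^{(\alpha)}(y_{M_k,i},\zeta_{M_k,i}) = \frac{g^{(N_k+1)}(\zeta_{M_k,i})}{(N_k+1)!\,K_{k,N_k+1}^{(\alpha)}}\,\C{I}_{\tau_{k-1},y_{M_k,i}}^{(t^{(k)})}\hat G_{k,N_k+1}^{(\alpha)},
\]
and to bound each of its three factors in turn after taking the $\|\cdot\|_{\infty,\F{\Gamma}_k}$-norm. First I would replace $|g^{(N_k+1)}(\zeta_{M_k,i})|$ by its uniform bound $A_k$, which is legitimate because $\zeta_{M_k,i}\in\Int{\F{\Gamma}_k}$ and $\|{}_kg^{(N_k+1)}\|_\infty=A_k$. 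This leaves the combinatorial prefactor $1/[(N_k+1)!\,K_{k,N_k+1}^{(\alpha)}]$ and the polynomial integral as the two quantities still to be estimated.

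For the prefactor, I would insert the closed form of the leading coefficient $K_{k,N_k+1}^{(\alpha)}$ recorded in Theorem \ref{sec:erranalysgp1}. Its reciprocal immediately supplies the algebraic factors $2^{-2N_k-1}$ and $|\F{\Gamma}_k|^{N_k+1}$ together with the Gamma quotient $\Gamma(\alpha+1)\,\Gamma(N_k+1+2\alpha)/[\Gamma(2\alpha+1)\,\Gamma(N_k+1+\alpha)]$; dividing once more by $(N_k+1)!=\Gamma(N_k+2)$ reduces the $N_k$-dependent part to $\Gamma(N_k+1+2\alpha)/[\Gamma(N_k+2)\,\Gamma(N_k+1+\alpha)]$. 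Applying Stirling's formula to this quotient extracts the leading behaviour $e^{N_k}N_k^{\alpha-N_k-3/2}/\sqrt{2\pi}$, while the $N_k$-independent constant $\Gamma(\alpha+1)/[\sqrt{2\pi}\,\Gamma(2\alpha+1)]$ together with the uniformly bounded Stirling correction is collected into $D^{(\alpha)}$, so that $B_{1,k}^{(\alpha)}=A_kD^{(\alpha)}$ reproduces exactly the factor $B_{1,k}^{(\alpha)}\,2^{-2N_k-1}e^{N_k}N_k^{\alpha-N_k-3/2}|\F{\Gamma}_k|^{N_k+1}$ of the statement.

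The integral is handled by the elementary estimate
\[
\Big|\C{I}_{\tau_{k-1},y_{M_k,i}}^{(t^{(k)})}\hat G_{k,N_k+1}^{(\alpha)}\Big|\le (y_{M_k,i}-\tau_{k-1})\,\big\|\hat G_{k,N_k+1}^{(\alpha)}\big\|_{\infty,\F{\Gamma}_k},
\]
which produces the factor $(y_{M_k,i}-\tau_{k-1})$. Since the affine map $t^{(k)}\mapsto(t^{(k)}-\tau_k^+)/\tau_k^-$ carries $\F{\Gamma}_k$ onto $[-1,1]$ without changing polynomial values, $\|\hat G_{k,N_k+1}^{(\alpha)}\|_{\infty,\F{\Gamma}_k}=\max_{[-1,1]}|G_{N_k+1}^{(\alpha)}|$, so the piecewise factor is nothing but a bound on the sup-norm of the Doha-standardized Gegenbauer polynomial, normalized by $G_{N_k+1}^{(\alpha)}(1)=1$. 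Here I would invoke the classical Szeg\H{o} dichotomy: for $\alpha\ge0$ the maximum sits at the endpoints and equals $1$, giving the first case; for $-1/2<\alpha<0$ it migrates into the interior. When $N_k$ is odd the degree $N_k+1$ is even and the maximum is attained at $x=0$, where evaluating $G_{N_k+1}^{(\alpha)}(0)$ and simplifying through the Legendre duplication formula yields $\Gamma(N_k/2+1)\Gamma(\alpha+1/2)/[\sqrt\pi\,\Gamma(N_k/2+\alpha+1)]$, the second case. When $N_k$ is even the degree is odd, $G_{N_k+1}^{(\alpha)}(0)=0$, and the maximum lies at the extremum nearest the origin, a root of $G_{N_k}^{(\alpha+1)}$ via the derivative identity $\tfrac{d}{dx}G_{N_k+1}^{(\alpha)}\propto G_{N_k}^{(\alpha+1)}$, whose value, estimated with sharp bounds on ultraspherical extrema, gives the third case with its $\sqrt{(N_k+1)(N_k+2\alpha+1)}$ denominator. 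Finally the asymptotic case $B_2^{(\alpha)}(N_k+1)^{-\alpha}$ follows by substituting the standard large-degree asymptotics of $G_{N_k+1}^{(\alpha)}$ into either interior estimate.

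The hard part will be the odd-degree interior case. Unlike the even-degree case, which reduces to the exact value at $x=0$, the relevant extremum is a nonelementary root of $G_{N_k}^{(\alpha+1)}$, so one must couple the derivative recurrence with two-sided estimates for the location and height of the outermost ultraspherical maximum to pin down both the factor $\sqrt{(N_k+1)(N_k+2\alpha+1)}$ and the precise Gamma arguments; this is exactly the step carried out in the proofs of Theorems 4.1 and 4.2 of \cite{Elgindy20172}, which the present result inherits by the stated change of notation. A secondary, purely bookkeeping difficulty is to verify that the Stirling correction in the prefactor is uniformly bounded over the admissible range $\alpha>-1/2$, so that the collected constant $D^{(\alpha)}$ is positive, finite, and independent of $N_k$.
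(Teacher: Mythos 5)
Your proposal is correct and follows essentially the same route as the paper, which itself gives no independent argument but defers entirely to the proofs of Theorems 4.1 and 4.2 of \cite{Elgindy20172} under a change of notation; your reconstruction (bounding $|g^{(N_k+1)}(\zeta)|$ by $A_k$, inverting the leading coefficient $K_{k,N_k+1}^{(\alpha)}$ to extract $2^{-2N_k-1}|\F{\Gamma}_k|^{N_k+1}$, applying Stirling to the resulting Gamma quotient for $e^{N_k}N_k^{\alpha-N_k-3/2}$, bounding the integral by length times the sup-norm of the standardized Gegenbauer polynomial, and invoking the Szeg\H{o} endpoint/interior dichotomy for the piecewise factor) matches every factor of the stated bound and is exactly the argument carried out in that reference.
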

Theorem \ref{thm:Jan212022} shows that the SG quadrature formula converges exponentially fast for piecewise smooth functions whose pieces are defined on $\F{\Gamma}_{1:K}$. Because $\psi$ is a piecewise smooth function on $\F{\Omega}_T$, the truncation errors in approximating the definite integrals over the intervals $\F{\Omega}_{x_{N,0:N-1}}$ using SG quadratures decay with an exponential convergence rate, and the total quadrature error is dominated by the errors committed in constructing $\breve u_{N,M}$ and $\tilde s \foralle N \in \MBJ_N$. On the other hand, regardless of how well the estimates of $\breve u_{N,M}$ and $\tilde s$, the FPSQ error in approximating $\C{I}_{x_{N,n}^{(t)}} \psi \foralle n \in \MBJ_N$, in the best scenario, is $O\left(N^{-1/2}\right)$, as $N \to \infty$, not to mention the size gap between the SGIM employed in the construction of the SG quadrature and the FIM required to achieve the same degree of accuracy-- clearly, the SGIM wins this race hands down. 

\begin{rem}
The error and the convergence analysis of GG quadratures for sufficiently smooth functions on the interval $[-1, 1]$ are outlined by (\cite[Theorems 3.1 \& 3.2]{Elgindy20171}). The error of the SGG quadrature for sufficiently smooth functions on the interval $\F{\Omega}_T$ is expounded in (\cite[Theorem 4.1]{Elgindy20161}).   
\end{rem}

\section{Reconstruction of Piecewise Analytic Functions with High Accuracy up to the Points of Jump Discontinuities}
\label{sec:DTD}
Equation \eqref{eq:FTS1AE1hi126Apr2021} shows that a $T$-periodic Fourier interpolant converges to a piecewise analytic function with jump discontinuities at rate $O(N^{-1/2})$ as $N$ grows large. To recover the piecewise analytic function with high accuracy from PS data, we present a novel, simple, and efficient edge-detection technique in the next section. We demonstrate later in Section \ref{sec:FPCFPI1} how to tune this new technique. 

\subsection{Detecting the Jump Discontinuities and Reconstructing the Piecewise Analytic Function}
\label{sec:DTD2}
Detecting the jump discontinuity points in the OC of Problem $\C{P}$ is a crucial step and an essential prerequisite to obtain accurate approximations to the solutions of the problem. In fact, the discontinuity points of the controller are not known a priori; therefore, we need to estimate their locations from the Fourier interpolant approximation. To this end, we present a novel accurate and efficient edge detection method that can estimate the locations of the desired discontinuities for piecewise constant functions with two jump discontinuities in ${\F{\Omega}_T}$. The proposed method is inspired by the fact that the horizontal extension of the Gibbs phenomenon is reduced as the number of spectral terms grows, but the ultimate graph of Fourier interpolant in the close vicinity of a jump discontinuity point $\xi$ turns into a jagged line on both sides of $\xi$ and passes in almost a vertical direction through a point whose abscissa is $\xi$ and ordinate falls within the open interval $\left(f(\xi^-),f(\xi^+)\right)$, except when $\xi$ exist at an interpolation point where Fourier interpolant matches the function value at $\xi$. Therefore, the location of $\xi$ is gradually squeezed between the locations of the sharp spikes and eventually falls (almost) at the midpoint between the two abscissas whose ordinates are the peak and the bottom out of the two jagged lines enclosing $\xi$, as $N \to \infty$. Hence, the peculiar manner in which Fourier interpolant behaves near a jump discontinuity point provides an excellent means of detecting one. We present a brief description of this method in what follows. 

\textbf{Implementation of the Edge Detection Strategy.} Given a piecewise constant function $f \in \C{H}_T^{0}: f(0) = f(T)$ with two jump discontinuity points in ${\F{\Omega}_T}$, start by constructing its Fourier interpolant $I_Nf$ and determine its extreme values on ${\F{\Omega}_T}$. To this end, we evaluate $I_Nf$ at a set of equally-spaced nodes $\MBS_M = \{y_{M,0:M-1}\}\,\foralls$ relatively large $M \in \MBZ^+$. Next, we find 
$d_{\max}:= \indmax I_Nf(\bm{y}_M)$ and $d_{\min}:= \indmin I_Nf(\bm{y}_M)$. To refine the obtained approximations to the extreme values, we extremize $I_Nf$ on the relatively small uncertainty intervals $[y_{M,d_{\max}-1}, y_{M,d_{\max}+1}]$ and $[y_{M,d_{\min}-1}, y_{M,d_{\min}+1}]$, respectively. For this task, we prefer to apply the recent fast line search method known by the Chebyshev PS line search method (CPSLSM) \cite{Elgindy2018optimization}. Let $I_N^{\max}f$ and $I_N^{\min}f$ be the maximum and minimum values of $I_Nf$ obtained by the CPSLSM, respectively. The next step is to set up the straight line $y = I_N^{\text{ave}}f = \frac{1}{2} \left(I_N^{\max}f + I_N^{\min}f\right)$ whose ordinate is the average value of the calculated extreme values; we call this line the ``separation line,'' as we shall use it later to separate the Fourier interpolant values into two groups of discrete data. For now, we choose a user-defined tolerance $\epsilon$, and form the ``user-defined discontinuity feasible zone''--- basically a narrow strip centered about the separation line with radius $\epsilon$, where any Fourier interpolant value within an $\epsilon$-distance from the separation line is recognized as a possible Fourier interpolant value paired with an estimated discontinuity point $\tilde \xi$ that is sufficiently close to a true discontinuity point $\xi$. Our rationale here is simple: ``while the Fourier interpolant at a discontinuity point $\xi$ is not necessarily equal to the average of the left and right limits at $\xi$, as verified by Table \ref{tab:OFICAAPOD1}, it is less likely that the Fourier interpolant at a continuity point to be exactly equal to the average of the left and right limits at $\xi$.'' We expect to have at most a single approximate discontinuity point near each discontinuity point, since the graph of Fourier interpolant moves in virtually a vertical direction through each point whose abscissa is a discontinuity point, so that the Fourier interpolant values at the interpolation points that are not sufficiently close to a discontinuity point are expected to live outside the user-defined discontinuity feasible zone, for relatively small $\epsilon$. For a certain tolerance $\epsilon$, we refer to this zone by ``the $\epsilon$-discontinuity feasible zone,'' and denote it by $\C{Z}_{\epsilon}^{\text{disc}}$. We prefer to set the $\epsilon$ value to within a relatively small length of ``the local extremeshoot height\footnote{The extremeshoot height refers to the vertical distance between the maximum overshoot and the minimum undershoot of the signal near a discontinuity.}'' such that $\epsilon = \tilde \epsilon\;(I_N^{\max}f - I_N^{\min}f)\; \foralls \tilde \epsilon \in (0, 0.01]$. Now, let $\Xi = \left\{\tilde \xi_{1:L}\right\}$ be the set of approximate discontinuity points collected at this step $\foralls L \in \{1, 2\}$. If $L = 2$, i.e., the method determines two approximate discontinuity points, then we consider the method successful and terminate the procedure at this step. Otherwise, we create the ``discontinuity auxiliary function $I_N^{\text{aux}}f$,'' which is a two-state, piecewise constant function whose two states are the obtained extreme values of $I_Nf$ such that
\begin{equation}\label{eq:INaux1}
I_N^{{\text{aux}}}f({y_{M,l}}) = \left\{ \begin{array}{l}
I_N^{\min }f,\quad {I_N}f({y_{M,l}}) < I_N^{{\text{ave}}}f,\\
I_N^{\max }f,\quad {I_N}f({y_{M,l}}) > I_N^{{\text{ave}}}f,
\end{array} \right.\quad \forall l \in \MBJ_{M}^+.
\end{equation}
Since the function $f$ switches its state abruptly at a discontinuity point $\xi$, we expect $\xi$ to occur sufficiently close to the two consecutive interpolation points whose ordinates lie closely above and below the separation line. To find these points, we find the two-elements index vector $J_{1:2}: I_N^{{\text{aux}}}f(y_{M,J_l}) - I_N^{{\text{aux}}}f(y_{M,J_l+1}) \ne 0\, \forall l = 1, 2$. Now, the desired discontinuity points $\xi_1$ and $\xi_2$ either exist in the closed intervals $[y_{M,J_1}, y_{M,J_1+1}]$ and $[y_{M,J_2}, y_{M,J_2+1}]$ or occur in close proximity of their boundaries. To account for both scenarios, denote the boundary points $y_{M,J_1}, y_{M,J_1+1}, y_{M,J_2}$, and $y_{M,J_2+1}$ by $b_j\,\forall j = 1:4$, respectively. If the set $\Xi$ is empty, we estimate $\xi_1$ and $\xi_2$ by the midpoints of the two intervals such that $\tilde \xi_l = \frac{1}{2} (b_{2 l}-b_{2 l-1})\,\forall l=1,2$. On the other hand, if the set $\Xi$ contains already one of the two discontinuity points, say $\xi_1$, then we drop the interval which either includes $\xi_1$ or whose boundaries are closely adjacent to $\xi_1$. We can then estimate $\xi_2$ by bisecting the remaining interval. That is, we can calculate $\tilde \xi_2$ by the formula
\[{\tilde \xi _2} = \left\{ \begin{array}{l}
\frac{1}{2}({b_2} - {b_1}),\quad {\text{if }}{b_1} - {{\tilde \xi }_1} > \varepsilon  \vee {{\tilde \xi }_1} - {b_2} > \varepsilon,\\
\frac{1}{2}({b_4} - {b_3}),\quad {\text{otherwise,}}
\end{array} \right.\]
$\foralls$ relatively small positive number $\varepsilon < T/M$. As a further correction step in practice, an estimated discontinuity point is assigned the value $T$ if its location is within a sufficiently small distance from $x = T$. For a relatively large value of $M$, we may reasonably set the estimated discontinuity point equal to $T$ when $J_2 = M - 1$. A graphical illustration of the method is depicted in Figure \ref{fig:EdDet1}, for $M = 200, \epsilon = 0.5\% (I_N^{\max}f-I_N^{\min}f)$, and $\varepsilon = T/(2 M)$. Table \ref{tab:OFICAAPOD2} shows the observed relative errors in the estimated jump discontinuity points of the functions $f_6, \ldots, f_{12}$ using $M \in \{100,400\}$ and the same $\epsilon$- and $\varepsilon$ values. We notice from the table that, for a certain $N$ value, the relative errors often drop-off when $M$ increases; roughly speaking, the estimated interior discontinuity points approximate the true ones to two-three and three-five significant digits for $M = 100$ and $400$, respectively. On the other hand, for a certain $M$ value, the relative errors may slightly decrease at the beginning for increasing $N$ values, but (almost) cease to fall beyond a certain level as $N$ grows larger, in general, except for $\xi_1 = 0.5$ of $f_6$, where the error drops to zero abruptly at $(N,M) = (200,100)$ and $(500,400)$ and sustains at this level for growing values of $N$. In all cases, the method locates the exact boundary jump discontinuity point perfectly whenever exists.

\begin{figure}[ht]
\hspace{-2cm}\includegraphics[scale=0.4]{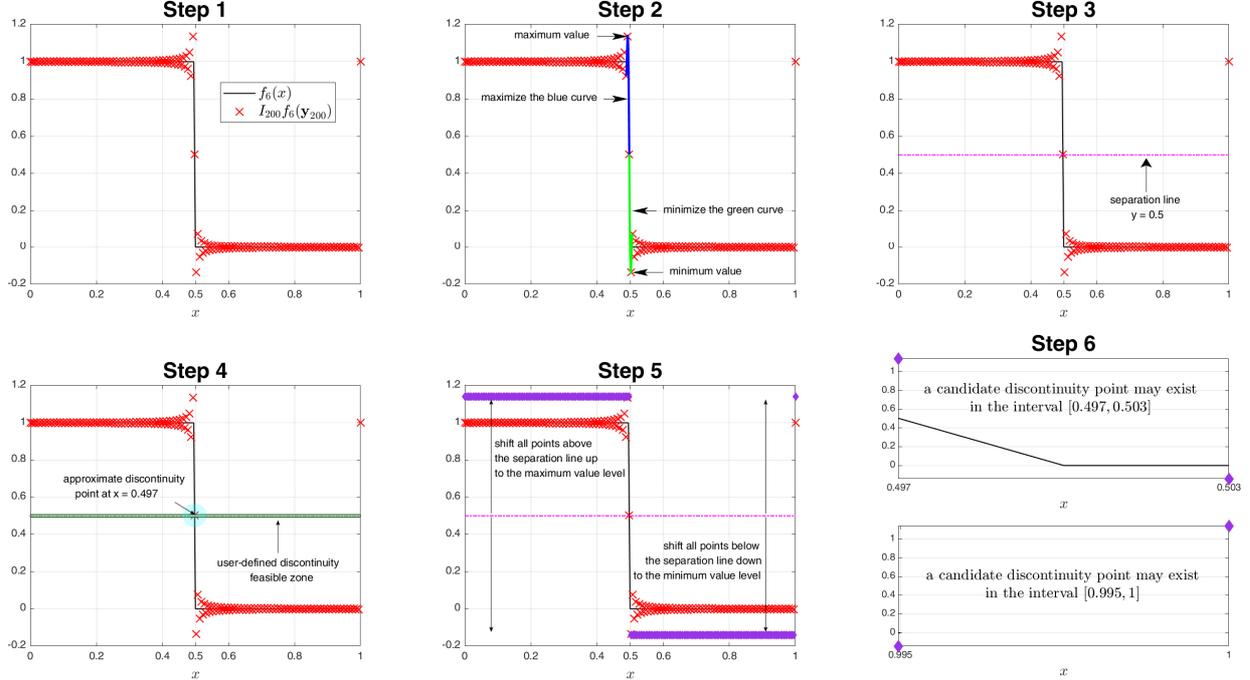}
\caption{Necessary steps to locate the discontinuity points of the wave function $f_6$ through the proposed edge detection method. In the first step the method calculates the Fourier interpolant $I_{200}f_6$ at the equally-spaced nodes $\bm{y}_{200} = \bm{x}_{200}$ and sets $y_{200, 200} = y_{200, 0}$. In Step 2, the indices of the extreme values of $I_{200}f_6(\bm{y}_{200})$ are determined ($99$ and $101$ in this case); these extreme values specify the parts of the graph of $I_{200}f_6$ to be extremized using the CPSLSM (the blue and green curves in the plot whose domains are the narrow intervals $[0.487, 0.497]$ and $[0.497, 0.508]$, respectively). In Step 3, the separation line $y = I_N^{\text{ave}}f_6 = 0.5$ is installed by averaging the determined extreme values from the previous step (the extreme values were found to be $1.14116$ and $-0.14116$). Interpolation points are deemed approximate discontinuity points in Step 4 if their ordinates fall within the boundaries of roughly $\C{Z}_{0.0064}^{\text{disc}}$; the method stops at this step if two estimates are found (only one estimate $\tilde \xi_1 \approx 0.497$ was recorded here with ordinate $0.503$). Otherwise, the discontinuity auxiliary function $I_N^{\text{aux}}f_6$ is derived in Step 5 by shifting all points above and below the separation line to the extreme values levels $y = 1.14116$ and $y = -0.14116$. The range analysis of $I_N^{\text{aux}}f_6$ gives birth to the two intervals $[0.497, 0.503]$ and $[0.995, 1]$ in step 6, which may contain the discontinuity points or their boundaries are sufficiently close to them. The midpoints of these two intervals are considered estimates to the the discontinuity points, except when $J_2 = M-1$, or when an approximate discontinuity point that was discovered in Step 4 either exist in one interval, or occurs in the vicinity of one of its boundaries. In this example, the midpoints of both intervals are abandoned, since $\tilde \xi_1 = 0.497 \in [0.497, 0.503]$ and $J_2 = 199$, so we set $\tilde \xi_2 = 1$.}
\label{fig:EdDet1}
\end{figure}

\begin{table}[ht]
\caption{Observed relative errors in the estimated discontinuity points using $M = 100$ and $M = 400$. All approximations are rounded to six significant digits.} 
\centering 
\begin{threeparttable}
\resizebox{1\columnwidth}{!}{%
\begin{tabular}{*{16}{c}} 
\toprule
 & & \multicolumn{2}{c}{$f_6$} & \multicolumn{2}{c}{$f_7$} & \multicolumn{2}{c}{$f_8$} & \multicolumn{2}{c}{$f_9$} & \multicolumn{2}{c}{$f_{10}$} & \multicolumn{2}{c}{$f_{11}$} & \multicolumn{2}{c}{$f_{12}$}\\
 & & $\xi_1 = 0.5$ & $\xi_2 = 1$ & $\xi_1 = 1/3$ & $\xi_2 = 1$ & $\xi_1 = 2/3$ & $\xi_2 = 1$ & $\xi_1 = 0.8183$ & $\xi_2 = 1$ & $\xi_1 = \pi/5$ & $\xi_2 = \pi/4$ & $\xi_1 = e/5$ & $\xi_2 = e/3$ & $\xi_1 = \ln 1.5$ & $\xi_2 = \ln 2$\\
\cmidrule{3-16} 
 & $N$ & $\left|\xi_1-\tilde \xi_1\right|_r$ & $\left|\xi_2-\tilde \xi_2\right|_r$& $\left|\xi_1-\tilde \xi_1\right|_r$ & $\left|\xi_2-\tilde \xi_2\right|_r$& $\left|\xi_1-\tilde \xi_1\right|_r$ & $\left|\xi_2-\tilde \xi_2\right|_r$& $\left|\xi_1-\tilde \xi_1\right|_r$ & $\left|\xi_2-\tilde \xi_2\right|_r$& $\left|\xi_1-\tilde \xi_1\right|_r$ & $\left|\xi_2-\tilde \xi_2\right|_r$& $\left|\xi_1-\tilde \xi_1\right|_r$ & $\left|\xi_2-\tilde \xi_2\right|_r$& $\left|\xi_1-\tilde \xi_1\right|_r$ & $\left|\xi_2-\tilde \xi_2\right|_r$\\ [0.5ex] 
\midrule 
\multirow{20}{*}{\rotatebox{90}{$M = 100$}} & 100 & 1.01010E-02 & 0 & 1.51515E-02 & 0 & 7.57576E-03 & 0 & 6.31637E-03 & 0 & 1.13102E-02 & 3.27207E-03 & 5.98232E-03 & 2.26636E-03 & 8.94233E-03 & 1.77161E-03\\
& 200 & 0 & 0 & 1.51515E-02 & 0 & 7.57576E-03 & 0 & 6.31637E-03 & 0 & 4.76605E-03 & 3.27207E-03 & 5.98232E-03 & 2.26636E-03 & 8.94233E-03 & 1.77161E-03\\
& 300 & 0 & 0 & 1.51515E-02 & 0 & 7.57576E-03 & 0 & 6.02752E-03 & 0 & 4.76605E-03 & 3.27207E-03 & 5.98232E-03 & 2.26636E-03 & 8.94233E-03 & 1.77161E-03\\
& 400 & 0 & 0 & 1.51515E-02 & 0 & 7.57576E-03 & 0 & 6.02752E-03 & 0 & 4.76605E-03 & 3.27207E-03 & 5.98232E-03 & 2.26636E-03 & 8.94233E-03 & 1.77161E-03\\
& 500 & 0 & 0 & 1.51515E-02 & 0 & 7.57576E-03 & 0 & 6.02752E-03 & 0 & 4.76605E-03 & 3.27207E-03 & 5.98232E-03 & 2.26636E-03 & 8.94233E-03 & 1.77161E-03\\
& 600 & 0 & 0 & 1.51515E-02 & 0 & 7.57576E-03 & 0 & 6.31637E-03 & 0 & 4.76605E-03 & 3.27207E-03 & 5.98232E-03 & 2.26636E-03 & 8.94233E-03 & 1.77161E-03\\
& 700 & 0 & 0 & 1.51515E-02 & 0 & 7.57576E-03 & 0 & 6.31637E-03 & 0 & 4.76605E-03 & 3.27207E-03 & 5.98232E-03 & 2.26636E-03 & 8.94233E-03 & 1.77161E-03\\
& 800 & 0 & 0 & 1.51515E-02 & 0 & 7.57576E-03 & 0 & 6.31637E-03 & 0 & 4.76605E-03 & 3.27207E-03 & 5.98232E-03 & 2.26636E-03 & 8.94233E-03 & 1.77161E-03\\
& 900 & 0 & 0 & 1.51515E-02 & 0 & 7.57576E-03 & 0 & 6.02752E-03 & 0 & 4.76605E-03 & 3.27207E-03 & 5.98232E-03 & 2.26636E-03 & 8.94233E-03 & 1.77161E-03\\
& 1000 & 0 & 0 & 1.51515E-02 & 0 & 7.57576E-03 & 0 & 6.02752E-03 & 0 & 4.76605E-03 & 3.27207E-03 & 5.98232E-03 & 2.26636E-03 & 8.94233E-03 & 1.77161E-03\\
& 1100 & 0 & 0 & 1.51515E-02 & 0 & 7.57576E-03 & 0 & 6.02752E-03 & 0 & 4.76605E-03 & 3.27207E-03 & 5.98232E-03 & 2.26636E-03 & 8.94233E-03 & 1.77161E-03\\
& 1200 & 0 & 0 & 1.51515E-02 & 0 & 7.57576E-03 & 0 & 6.31637E-03 & 0 & 4.76605E-03 & 3.27207E-03 & 5.98232E-03 & 2.26636E-03 & 8.94233E-03 & 1.77161E-03\\
& 1300 & 0 & 0 & 1.51515E-02 & 0 & 7.57576E-03 & 0 & 6.31637E-03 & 0 & 4.76605E-03 & 3.27207E-03 & 5.98232E-03 & 2.26636E-03 & 8.94233E-03 & 1.77161E-03\\
& 1400 & 0 & 0 & 1.51515E-02 & 0 & 7.57576E-03 & 0 & 6.02752E-03 & 0 & 4.76605E-03 & 3.27207E-03 & 5.98232E-03 & 2.26636E-03 & 8.94233E-03 & 1.77161E-03\\
& 1500 & 0 & 0 & 1.51515E-02 & 0 & 7.57576E-03 & 0 & 6.02752E-03 & 0 & 4.76605E-03 & 3.27207E-03 & 5.98232E-03 & 2.26636E-03 & 8.94233E-03 & 1.77161E-03\\
& 2000 & 0 & 0 & 1.51515E-02 & 0 & 7.57576E-03 & 0 & 6.02752E-03 & 0 & 4.76605E-03 & 3.27207E-03 & 5.98232E-03 & 2.26636E-03 & 8.94233E-03 & 1.77161E-03\\
& 3000 & 0 & 0 & 1.51515E-02 & 0 & 7.57576E-03 & 0 & 6.31637E-03 & 0 & 4.76605E-03 & 3.27207E-03 & 5.98232E-03 & 2.26636E-03 & 8.94233E-03 & 1.77161E-03\\
& 4000 & 0 & 0 & 1.51515E-02 & 0 & 7.57576E-03 & 0 & 6.02752E-03 & 0 & 4.76605E-03 & 3.27207E-03 & 5.98232E-03 & 2.26636E-03 & 8.94233E-03 & 1.77161E-03\\
& 5000 & 0 & 0 & 1.51515E-02 & 0 & 7.57576E-03 & 0 & 6.02752E-03 & 0 & 4.76605E-03 & 3.27207E-03 & 5.98232E-03 & 2.26636E-03 & 8.94233E-03 & 1.77161E-03\\
& 10000 & 0 & 0 & 1.51515E-02 & 0 & 7.57576E-03 & 0 & 6.02752E-03 & 0 & 4.76605E-03 & 3.27207E-03 & 5.98232E-03 & 2.26636E-03 & 8.94233E-03 & 1.77161E-03\\
\midrule
\multirow{20}{*}{\rotatebox{90}{$M = 400$}} & 100 & 1.00251E-02 & 0 & 3.75940E-03 & 0 & 1.87970E-03 & 0 & 3.06798E-03 & 0 & 4.78300E-03 & 4.02499E-04 & 2.67893E-03 & 8.70828E-05 & 1.73431E-03 & 3.37813E-03\\
& 200 & 5.01253E-03 & 0 & 3.75940E-03 & 0 & 1.87970E-03 & 0 & 5.20671E-06 & 0 & 7.94154E-04 & 3.59358E-03 & 1.93109E-03 & 2.67893E-03 & 4.44690E-03 & 2.37647E-04\\
& 300 & 5.01253E-03 & 0 & 3.75940E-03 & 0 & 1.87970E-03 & 0 & 5.20671E-06 & 0 & 7.94154E-04 & 4.02499E-04 & 2.67893E-03 & 8.70828E-05 & 1.73431E-03 & 3.85342E-03\\
& 400 & 2.50627E-03 & 0 & 3.75940E-03 & 0 & 1.87970E-03 & 0 & 5.20671E-06 & 0 & 7.94154E-04 & 4.02499E-04 & 1.93109E-03 & 8.70828E-05 & 4.44690E-03 & 2.37647E-04\\
& 500 & 0 & 0 & 3.75940E-03 & 0 & 1.87970E-03 & 0 & 5.20671E-06 & 0 & 7.94154E-04 & 4.02499E-04 & 1.93109E-03 & 8.70828E-05 & 1.73431E-03 & 2.37647E-04\\
& 600 & 0 & 0 & 3.75940E-03 & 0 & 1.87970E-03 & 0 & 5.20671E-06 & 0 & 7.94154E-04 & 4.02499E-04 & 2.67893E-03 & 8.70828E-05 & 1.73431E-03 & 2.37647E-04\\
& 700 & 0 & 0 & 3.75940E-03 & 0 & 1.87970E-03 & 0 & 5.20671E-06 & 0 & 7.94154E-04 & 4.02499E-04 & 1.93109E-03 & 8.70828E-05 & 1.73431E-03 & 2.37647E-04\\
& 800 & 0 & 0 & 3.75940E-03 & 0 & 1.87970E-03 & 0 & 5.20671E-06 & 0 & 7.94154E-04 & 4.02499E-04 & 1.93109E-03 & 8.70828E-05 & 1.73431E-03 & 2.37647E-04\\
& 900 & 0 & 0 & 3.75940E-03 & 0 & 1.87970E-03 & 0 & 5.20671E-06 & 0 & 7.94154E-04 & 4.02499E-04 & 2.67893E-03 & 8.70828E-05 & 1.73431E-03 & 2.37647E-04\\
& 1000 & 0 & 0 & 3.75940E-03 & 0 & 1.87970E-03 & 0 & 5.20671E-06 & 0 & 7.94154E-04 & 4.02499E-04 & 1.93109E-03 & 8.70828E-05 & 1.73431E-03 & 2.37647E-04\\
& 1100 & 0 & 0 & 3.75940E-03 & 0 & 1.87970E-03 & 0 & 5.20671E-06 & 0 & 7.94154E-04 & 4.02499E-04 & 2.67893E-03 & 8.70828E-05 & 1.73431E-03 & 2.37647E-04\\
& 1200 & 0 & 0 & 3.75940E-03 & 0 & 1.87970E-03 & 0 & 5.20671E-06 & 0 & 7.94154E-04 & 4.02499E-04 & 1.93109E-03 & 8.70828E-05 & 1.73431E-03 & 2.37647E-04\\
& 1300 & 0 & 0 & 3.75940E-03 & 0 & 1.87970E-03 & 0 & 5.20671E-06 & 0 & 7.94154E-04 & 4.02499E-04 & 1.93109E-03 & 8.70828E-05 & 1.73431E-03 & 2.37647E-04\\
& 1400 & 0 & 0 & 3.75940E-03 & 0 & 1.87970E-03 & 0 & 5.20671E-06 & 0 & 7.94154E-04 & 4.02499E-04 & 2.67893E-03 & 8.70828E-05 & 1.73431E-03 & 2.37647E-04\\
& 1500 & 0 & 0 & 3.75940E-03 & 0 & 1.87970E-03 & 0 & 5.20671E-06 & 0 & 7.94154E-04 & 4.02499E-04 & 1.93109E-03 & 8.70828E-05 & 1.73431E-03 & 2.37647E-04\\
& 2000 & 0 & 0 & 3.75940E-03 & 0 & 1.87970E-03 & 0 & 5.20671E-06 & 0 & 7.94154E-04 & 4.02499E-04 & 1.93109E-03 & 8.70828E-05 & 1.73431E-03 & 2.37647E-04\\
& 3000 & 0 & 0 & 3.75940E-03 & 0 & 1.87970E-03 & 0 & 5.20671E-06 & 0 & 7.94154E-04 & 4.02499E-04 & 1.93109E-03 & 8.70828E-05 & 1.73431E-03 & 2.37647E-04\\
& 4000 & 0 & 0 & 3.75940E-03 & 0 & 1.87970E-03 & 0 & 5.20671E-06 & 0 & 7.94154E-04 & 4.02499E-04 & 1.93109E-03 & 8.70828E-05 & 1.73431E-03 & 2.37647E-04\\
& 5000 & 0 & 0 & 3.75940E-03 & 0 & 1.87970E-03 & 0 & 5.20671E-06 & 0 & 7.94154E-04 & 4.02499E-04 & 1.93109E-03 & 8.70828E-05 & 1.73431E-03 & 2.37647E-04\\
& 10000 & 0 & 0 & 3.75940E-03 & 0 & 1.87970E-03 & 0 & 5.20671E-06 & 0 & 7.94154E-04 & 4.02499E-04 & 1.93109E-03 & 8.70828E-05 & 1.73431E-03 & 2.37647E-04\\
\bottomrule 
\end{tabular}
}\\
\begin{tablenotes}\footnotesize
\item[*] $|\cdot|_r$ gives the relative error, and the letter E stands for power of $10$.
\end{tablenotes}
\end{threeparttable}
\label{tab:OFICAAPOD2} 
\end{table}

Now that we know how to detect discontinuity points with plausible accuracy, it remains a final question before we end this section: how can we quickly reconstruct the piecewise analytic function from the PS data? A possible answer to this question is simple and does not require much effort after recovering the discontinuity points with satisfactory precision. The key to the answer lies again in the separation line $y = I_N^{\text{ave}}f$. In particular, the Fourier interpolant values are divided into two groups of discrete data by this line: one group contains all Fourier interpolant values above the separation line and occurs almost entirely close to the upper state level of the discontinuous function, say $\MB{FI}^{\text{u}}$, and another group contains all Fourier interpolant values below the separation line and mainly appears close to the lower state level of the discontinuous function, say $\MB{FI}^{\text{d}}$, except near the discontinuity points $\xi_1$ and $\xi_2$, where the Fourier interpolant values in each group are significantly larger or smaller than the other values in the same set. This motivated us to measure the central tendency of each data group using the median, which is less likely to be distorted by outliers than the mean. To this end, let ${\breve f^{\max }_{N,M}}$ and ${\breve f^{\min }_{N,M}}$ be the medians of the two sets of Fourier interpolant values, $\MB{FI}^{\text{u}}$ and $\MB{FI}^{\text{d}}$, and define the approximate discontinuous function $\breve f_{N,M}$ by
\[\breve f_{N,M}(t) = \left\{ \begin{array}{l}
{{\breve f}^{\max }_{N,M}},\quad 0 \le t < {{\tilde \xi }_1} \vee {{\tilde \xi }_2} \le t \le T,\\
{{\breve f}^{\min }_{N,M}},\quad {{\tilde \xi }_1} \le t < {{\tilde \xi }_2},
\end{array} \right.\]
if ${I_N}f(0) \in \MB{FI}^{\text{u}}$, or by
\[\breve f_{N,M}(t) = \left\{ \begin{array}{l}
{{\breve f}^{\min }_{N,M}},\quad 0 \le t < {{\tilde \xi }_1} \vee {{\tilde \xi }_2} \le t \le T,\\
{{\breve f}^{\max }_{N,M}},\quad {{\tilde \xi }_1} \le t < {{\tilde \xi }_2},
\end{array} \right.\]
otherwise. A pseudocode for the construction of $\breve f_{N,M}$ from the PS data is presented in Algorithm \ref{alg:2}. Table \ref{tab:OFICAAPOD3} shows the small absolute errors in the estimated extreme values of the discontinuous functions $f_6, \ldots, f_{12}$ using $M \in \{100, 400\}$, for increasing values of $N$. Figures \ref{fig:brevef6}-\ref{fig:brevef12} also show snapshots of the approximate discontinuous functions $\breve f_{6,N,M}, \ldots, \breve f_{12,N,M}$ over one period using $M = 400$, for increasing values of $N$, where $\breve f_{j,N,M}$ denotes the approximate discontinuous function $(\breve f_j)_{N,M} \forall j$.

\begin{table}[ht]
\caption{Observed absolute errors in the estimated extreme values using $M = 100$ and $M = 400$. All approximations are rounded to 5 significant digits.} 
\centering 
\resizebox{1\columnwidth}{!}{%
\begin{tabular}{*{16}{c}} 
\toprule
 & $N$ & $\left|f_6^{\max}-{\breve f_{6,N,M}^{\max }}\right|$ & $\left|f_6^{\min}-{\breve f_{6,N,M}^{\min }}\right|$ & $\left|f_7^{\max}-{\breve f_{7,N,M}^{\max }}\right|$ & $\left|f_7^{\min}-{\breve f_{7,N,M}^{\min }}\right|$ & $\left|f_8^{\max}-{\breve f_{8,N,M}^{\max }}\right|$ & $\left|f_8^{\min}-{\breve f_{8,N,M}^{\min }}\right|$ & $\left|f_9^{\max}-{\breve f_{9,N,M}^{\max }}\right|$ & $\left|f_9^{\min}-{\breve f_{9,N,M}^{\min }}\right|$ & $\left|f_{10}^{\max}-{\breve f_{10,N,M}^{\max }}\right|$ & $\left|f_{10}^{\min}-{\breve f_{10,N,M}^{\min }}\right|$ & $\left|f_{11}^{\max}-{\breve f_{11,N,M}^{\max }}\right|$ & $\left|f_{11}^{\min}-{\breve f_{11,N,M}^{\min }}\right|$ & $\left|f_{12}^{\max}-{\breve f_{12,N,M}^{\max }}\right|$ & $\left|f_{12}^{\min}-{\breve f_{12,N,M}^{\min }}\right|$\\ [0.5ex] 
\midrule 
\multirow{20}{*}{\rotatebox{90}{$M = 100$}} & 100 & 0 & 5.46088E-03 & 0 & 4.49677E-03 & 0 & 4.50457E-04 & 2.49110E-05 & 0 & 0 & 3.35184E-05 & 0 & 9.11425E-05 & 4.12214E-03 & 0\\
& 200 & 4.99626E-03 & 4.99875E-03 & 0 & 2.51316E-03 & 9.32089E-03 & 3.03750E-02 & 4.89755E-02 & 3.53520E-03 & 4.95266E-04 & 2.28608E-02 & 7.44350E-03 & 1.73682E-03 & 1.45449E-02 & 1.81804E-02\\
& 300 & 0 & 0 & 0 & 0 & 0 & 0 & 3.02093E-02 & 0 & 0 & 0 & 0 & 3.25348E-03 & 3.96128E-03 & 0\\
& 400 & 0 & 7.93199E-05 & 3.28370E-03 & 1.05061E-03 & 1.43522E-03 & 7.84150E-03 & 3.91300E-02 & 1.86406E-03 & 7.36512E-05 & 2.06300E-03 & 8.31750E-03 & 3.33354E-02 & 1.22597E-02 & 1.66248E-03\\
& 500 & 0 & 6.65826E-05 & 1.49456E-05 & 1.68377E-05 & 9.26264E-05 & 1.13569E-03 & 2.86345E-02 & 0 & 0 & 7.84322E-03 & 0 & 1.80370E-03 & 1.90864E-03 & 2.03980E-04\\
& 600 & 1.39572E-03 & 1.60546E-03 & 0 & 0 & 0 & 0 & 5.36424E-03 & 0 & 8.76537E-05 & 1.73440E-03 & 0 & 6.07519E-03 & 6.45157E-03 & 6.11127E-04\\
& 700 & 0 & 5.86112E-05 & 0 & 5.43198E-05 & 0 & 6.00823E-05 & 3.87591E-04 & 0 & 0 & 3.66535E-04 & 1.92613E-04 & 4.75528E-03 & 1.73308E-03 & 0\\
& 800 & 0 & 5.54328E-05 & 2.61831E-04 & 1.14336E-04 & 5.59091E-04 & 5.05955E-03 & 1.03214E-02 & 2.92815E-04 & 1.01803E-04 & 4.36209E-03 & 1.77192E-03 & 9.55560E-03 & 3.90087E-03 & 3.24764E-04\\
& 900 & 0 & 0 & 0 & 0 & 0 & 0 & 2.41966E-05 & 0 & 0 & 0 & 0 & 0 & 0 & 0\\
& 1000 & 3.82151E-04 & 5.23656E-04 & 6.24540E-04 & 1.30314E-04 & 1.57500E-04 & 1.16577E-04 & 1.00520E-02 & 0 & 6.35917E-06 & 2.48792E-03 & 8.63638E-04 & 1.46659E-04 & 8.34752E-03 & 7.09763E-04\\
& 1100 & 0 & 0 & 0 & 0 & 0 & 0 & 0 & 0 & 0 & 0 & 0 & 0 & 0 & 0\\
& 1200 & 0 & 0 & 0 & 0 & 0 & 0 & 4.11854E-03 & 0 & 0 & 1.76179E-05 & 0 & 9.28325E-05 & 1.26346E-04 & 0\\
& 1300 & 0 & 3.65047E-05 & 0 & 7.18113E-05 & 0 & 5.71562E-05 & 8.10304E-05 & 0 & 0 & 7.69238E-05 & 0 & 2.75065E-04 & 6.27262E-04 & 0\\
& 1400 & 1.42066E-04 & 2.31099E-04 & 0 & 3.46255E-05 & 1.88635E-04 & 1.18611E-03 & 6.31568E-03 & 1.56579E-04 & 0 & 3.65358E-04 & 1.22283E-04 & 1.18564E-03 & 1.61653E-05 & 1.02878E-04\\
& 1500 & 0 & 8.64334E-05 & 0 & 0 & 0 & 0 & 1.35611E-03 & 0 & 0 & 5.89476E-04 & 0 & 7.21775E-04 & 5.77529E-05 & 0\\
& 2000 & 0 & 5.10837E-05 & 1.52090E-05 & 1.07136E-05 & 8.96558E-05 & 3.63116E-04 & 6.37900E-05 & 0 & 0 & 3.81931E-04 & 3.63493E-05 & 1.51160E-04 & 2.17857E-06 & 0\\
& 3000 & 0 & 3.18855E-05 & 0 & 0 & 0 & 0 & 5.74120E-04 & 0 & 0 & 3.16420E-04 & 1.52756E-04 & 2.07019E-03 & 2.15656E-05 & 0\\
& 4000 & 5.33208E-05 & 5.53811E-05 & 8.26423E-05 & 3.18614E-05 & 3.46062E-05 & 3.65272E-05 & 1.47634E-03 & 3.49097E-05 & 4.88532E-06 & 3.00460E-04 & 3.29526E-04 & 1.46024E-03 & 5.94479E-05 & 5.52881E-05\\
& 5000 & 0 & 5.06295E-05 & 0 & 2.09657E-06 & 0 & 4.73278E-06 & 3.15796E-04 & 0 & 0 & 2.55724E-06 & 0 & 4.07073E-06 & 2.82026E-06 & 0\\
& 10000 & 0 & 5.02725E-05 & 0 & 4.33234E-05 & 0 & 1.24372E-05 & 5.07503E-08 & 0 & 0 & 3.13849E-04 & 0 & 1.01557E-06 & 4.19042E-05 & 0\\
\midrule
\multirow{20}{*}{\rotatebox{90}{$M = 400$}} & 100 & 0 & 7.45429E-07 & 0 & 1.08003E-07 & 8.80460E-05 & 6.31829E-04 & 8.33915E-03 & 0 & 0 & 2.13359E-05 & 0 & 1.37385E-05 & 1.46014E-03 & 3.08440E-05\\
& 200 & 0 & 1.01475E-07 & 0 & 8.87797E-06 & 0 & 5.73833E-03 & 7.74449E-03 & 0 & 0 & 6.42337E-06 & 2.61235E-05 & 6.21617E-04 & 3.73607E-04 & 4.76268E-07\\
& 300 & 0 & 4.37162E-05 & 0 & 0 & 0 & 0 & 5.36993E-05 & 0 & 0 & 0 & 0 & 8.08618E-04 & 9.86152E-04 & 0\\
& 400 & 0 & 1.29624E-03 & 0 & 1.09308E-03 & 0 & 2.66733E-05 & 4.08258E-06 & 0 & 0 & 7.64073E-05 & 0 & 1.70227E-04 & 2.55136E-04 & 4.76633E-07\\
& 500 & 0 & 5.13750E-05 & 0 & 4.34465E-06 & 0 & 2.77615E-04 & 3.35999E-05 & 0 & 0 & 2.27026E-03 & 0 & 3.65232E-04 & 3.83288E-04 & 0\\
& 600 & 0 & 4.61691E-05 & 0 & 0 & 0 & 0 & 3.20427E-04 & 0 & 0 & 0 & 0 & 5.51513E-05 & 1.58906E-03 & 0\\
& 700 & 0 & 8.19535E-05 & 0 & 0 & 0 & 0 & 8.12122E-04 & 0 & 0 & 3.29693E-04 & 0 & 0 & 0 & 0\\
& 800 & 1.24994E-03 & 1.24998E-03 & 0 & 6.30566E-04 & 2.45823E-03 & 7.52286E-03 & 3.30175E-05 & 4.49499E-03 & 1.28307E-04 & 6.18159E-03 & 7.77397E-03 & 2.64532E-02 & 3.89492E-03 & 8.93570E-04\\
& 900 & 4.17024E-05 & 7.19074E-05 & 0 & 0 & 0 & 0 & 1.31607E-04 & 0 & 1.65860E-06 & 1.64881E-06 & 0 & 8.01157E-04 & 1.56246E-04 & 0\\
& 1000 & 0 & 1.34536E-05 & 0 & 4.84323E-06 & 0 & 1.15471E-05 & 7.87413E-05 & 5.36932E-07 & 2.14505E-07 & 8.21346E-06 & 0 & 1.27814E-04 & 9.26297E-05 & 0\\
& 1100 & 2.59035E-05 & 2.79315E-05 & 7.65316E-07 & 7.69717E-06 & 1.00792E-05 & 1.63958E-04 & 4.62507E-05 & 1.80063E-05 & 0 & 1.59331E-04 & 1.41455E-04 & 7.03096E-04 & 1.37554E-04 & 0\\
& 1200 & 0 & 0 & 0 & 0 & 0 & 0 & 2.19871E-06 & 0 & 0 & 0 & 0 & 0 & 0 & 0\\
& 1300 & 0 & 1.29724E-05 & 0 & 1.13542E-05 & 0 & 3.09164E-05 & 1.56925E-04 & 0 & 1.82411E-06 & 2.20495E-05 & 0 & 9.22740E-05 & 3.79064E-04 & 0\\
& 1400 & 0 & 0 & 0 & 0 & 0 & 1.38501E-04 & 5.58243E-04 & 0 & 0 & 1.56315E-04 & 0 & 3.51820E-06 & 0 & 0\\
& 1500 & 0 & 2.09907E-05 & 0 & 0 & 0 & 0 & 7.19364E-04 & 1.19776E-05 & 0 & 1.35936E-04 & 1.22263E-04 & 5.90207E-04 & 2.09164E-05 & 0\\
& 2000 & 0 & 4.13727E-06 & 8.73579E-07 & 9.97008E-07 & 5.81071E-06 & 7.10885E-05 & 1.22050E-04 & 2.72419E-07 & 0 & 1.94218E-03 & 0 & 4.49886E-05 & 9.27621E-05 & 0\\
& 3000 & 0 & 0 & 0 & 0 & 0 & 0 & 5.24378E-05 & 0 & 0 & 3.56397E-05 & 0 & 0 & 0 & 0\\
& 4000 & 1.20035E-04 & 1.21493E-04 & 1.88141E-04 & 3.12670E-05 & 5.17704E-05 & 6.94291E-06 & 1.12754E-03 & 1.36414E-05 & 1.76000E-06 & 2.09533E-04 & 2.74553E-04 & 1.11656E-03 & 3.63409E-05 & 0\\
& 5000 & 0 & 3.26496E-06 & 0 & 1.90552E-07 & 0 & 3.26261E-06 & 5.22698E-05 & 0 & 2.52510E-07 & 1.04305E-05 & 2.56480E-05 & 1.27053E-04 & 1.27472E-04 & 3.86535E-06\\
& 10000 & 0 & 3.16213E-06 & 0 & 2.80268E-06 & 0 & 9.98510E-07 & 4.02506E-06 & 3.06259E-07 & 0 & 2.24589E-05 & 0 & 9.06546E-05 & 2.70250E-06 & 0\\
\bottomrule 
\end{tabular}
}
\label{tab:OFICAAPOD3} 
\end{table}

\begin{figure}[H]
\hspace{-2cm}\includegraphics[scale=0.4]{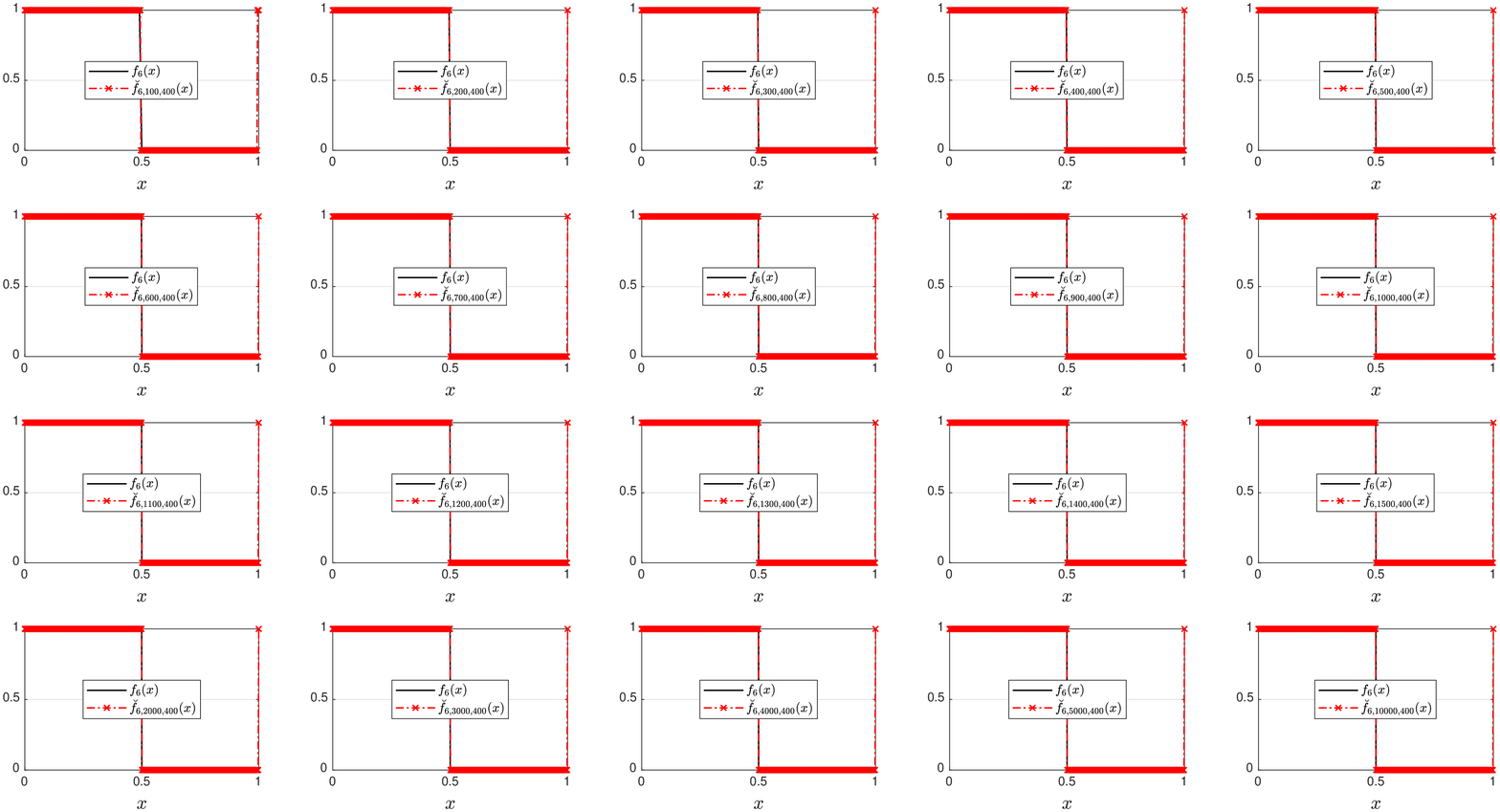}
\vspace{-1cm}\caption{Snapshots of the approximate discontinuous function $\breve f_6$ over one period using $M = 400$, for increasing values of $N$.}
\label{fig:brevef6}
\end{figure}

\begin{figure}[H]
\hspace{-2cm}\includegraphics[scale=0.4]{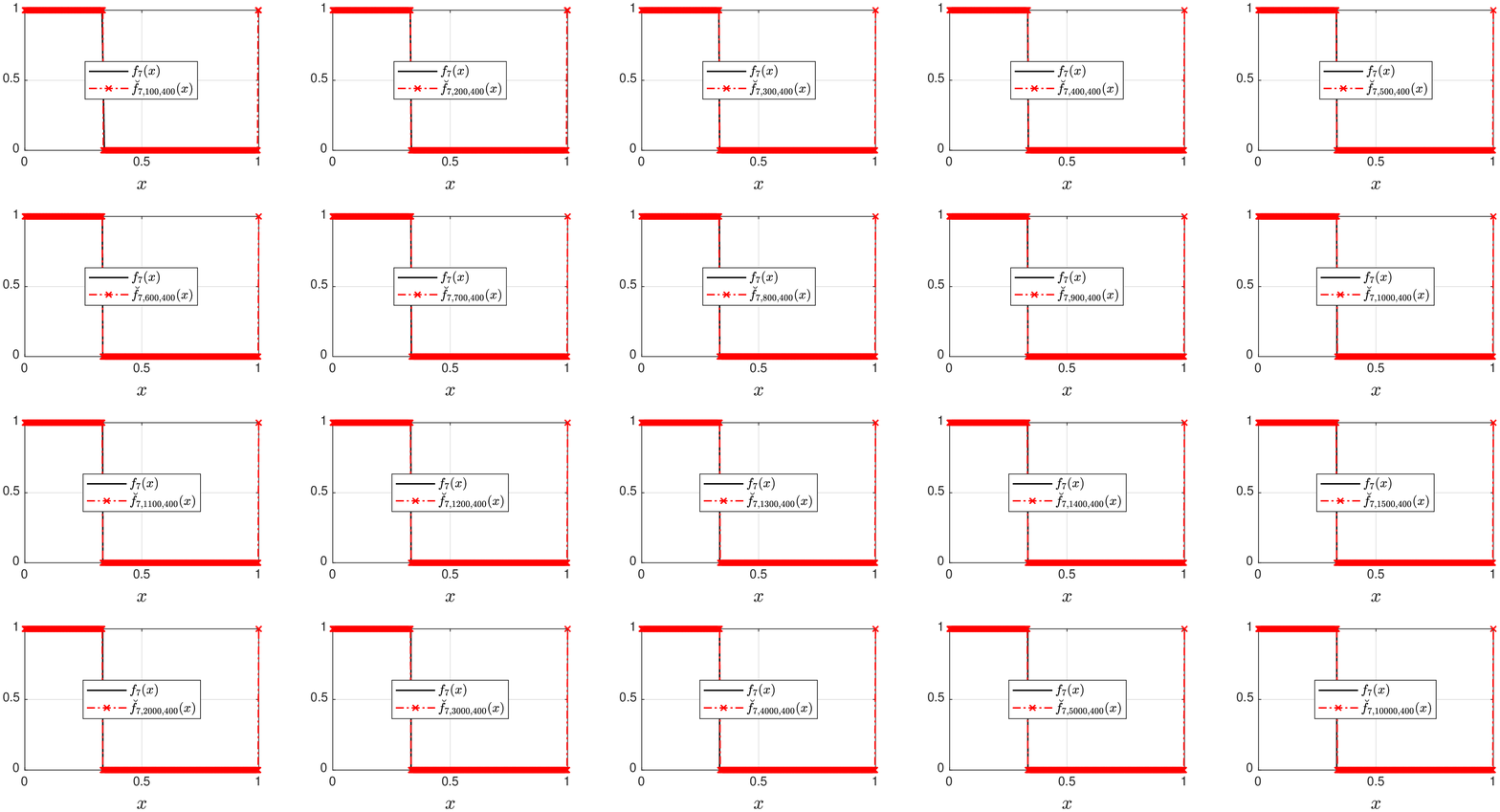}
\vspace{-1cm}\caption{Snapshots of the approximate discontinuous function $\breve f_7$ over one period using $M = 400$, for increasing values of $N$.}
\label{fig:brevef7}
\end{figure}

\begin{figure}[H]
\hspace{-2cm}\includegraphics[scale=0.4]{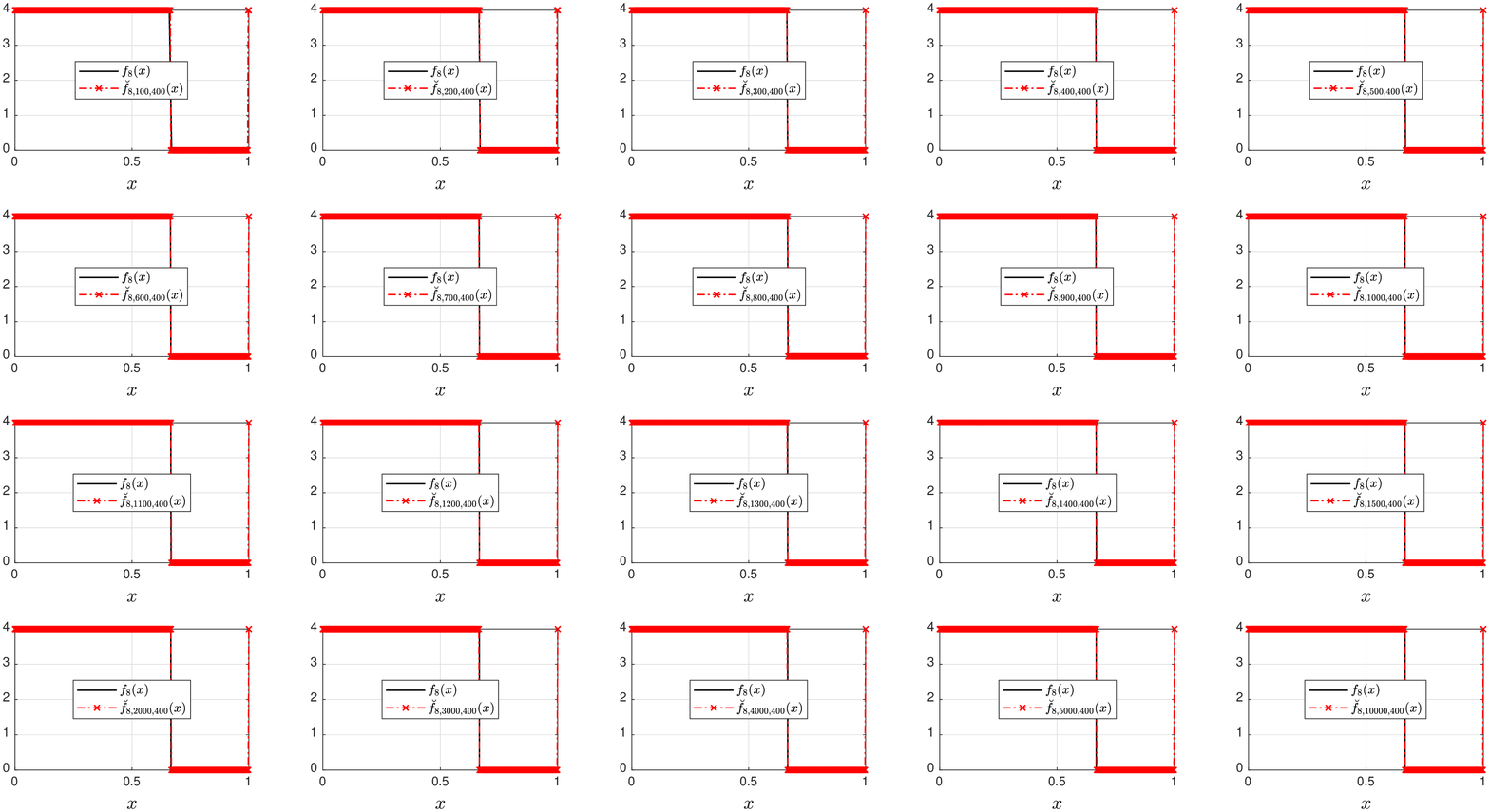}
\vspace{-1cm}\caption{Snapshots of the approximate discontinuous function $\breve f_8$ over one period using $M = 400$, for increasing values of $N$.}
\label{fig:brevef8}
\end{figure}

\begin{figure}[H]
\hspace{-2cm}\includegraphics[scale=0.4]{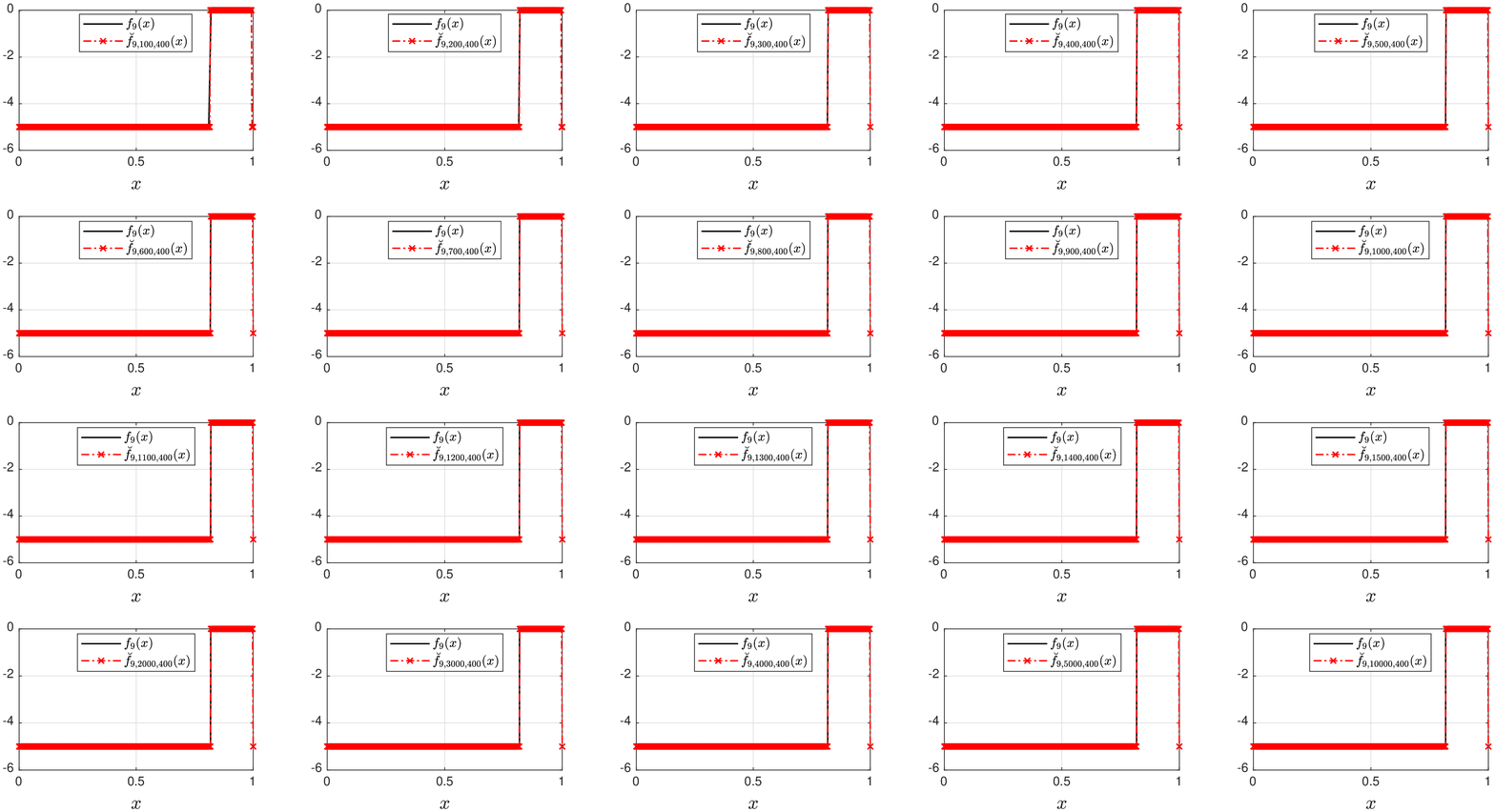}
\vspace{-1cm}\caption{Snapshots of the approximate discontinuous function $\breve f_9$ over one period using $M = 400$, for increasing values of $N$.}
\label{fig:brevef9}
\end{figure}

\begin{figure}[H]
\hspace{-2cm}\includegraphics[scale=0.4]{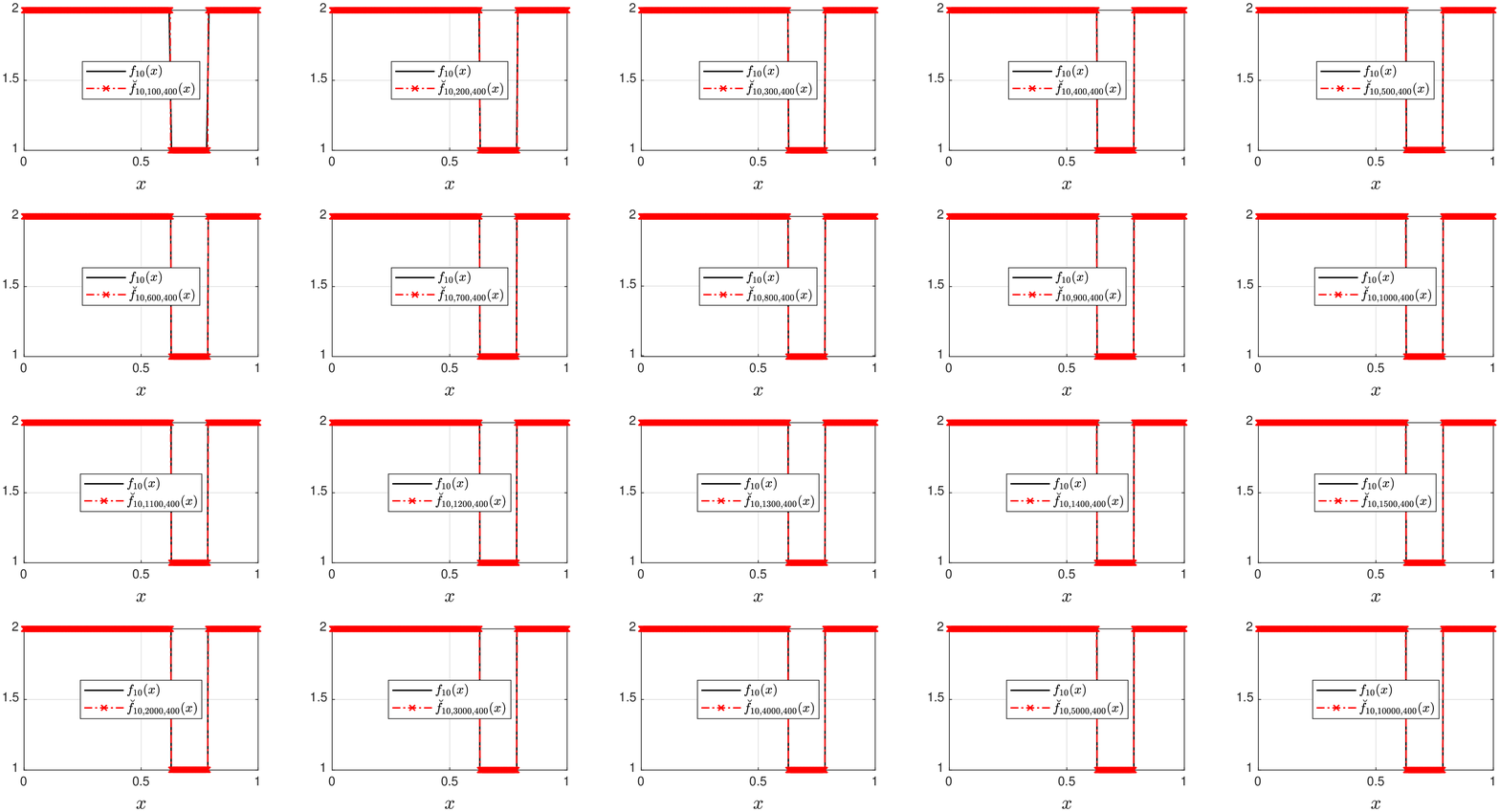}
\vspace{-1cm}\caption{Snapshots of the approximate discontinuous function $\breve f_{10}$ over one period using $M = 400$, for increasing values of $N$.}
\label{fig:brevef10}
\end{figure}

\begin{figure}[H]
\hspace{-2cm}\includegraphics[scale=0.4]{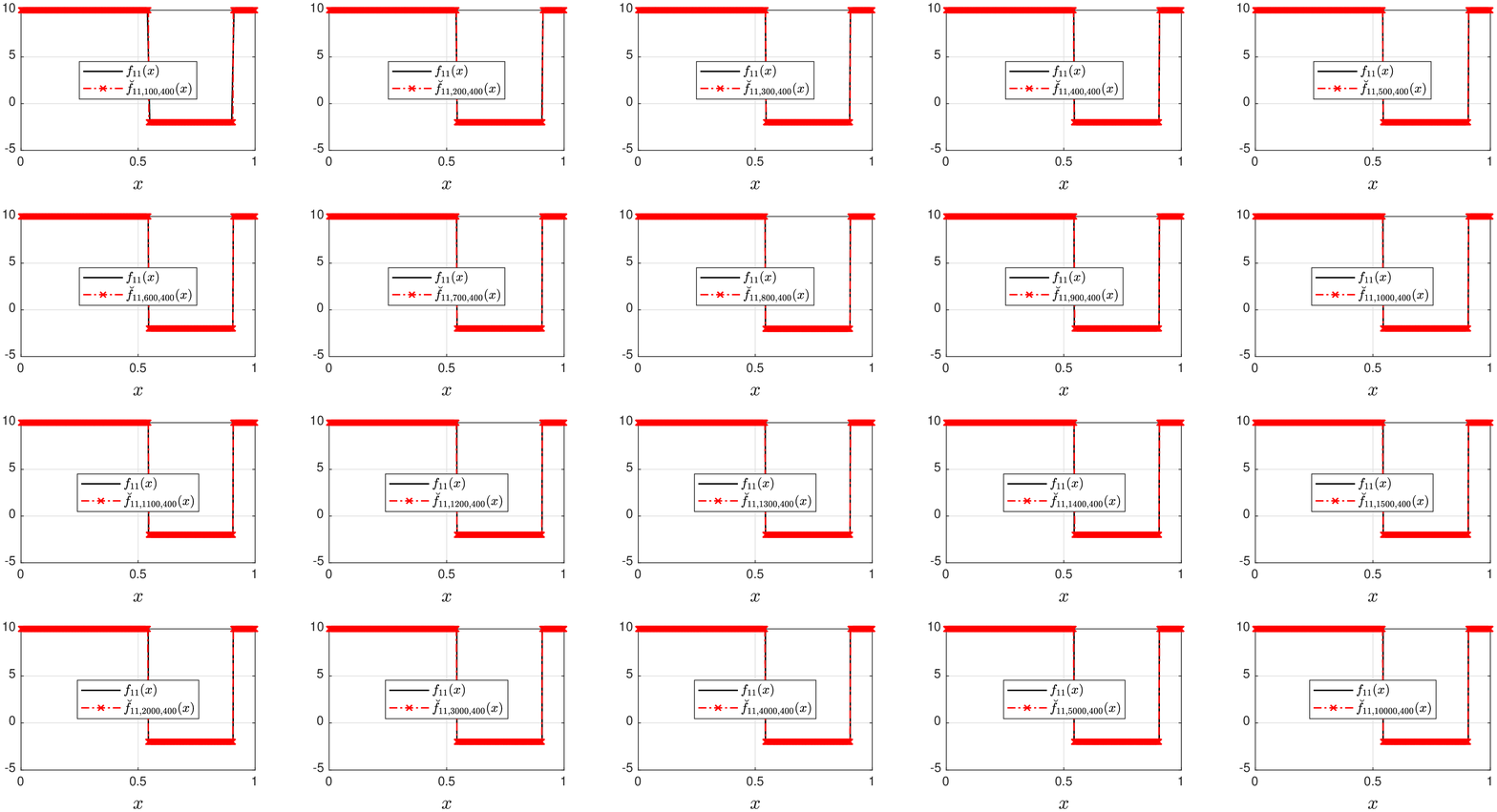}
\vspace{-1cm}\caption{Snapshots of the approximate discontinuous function $\breve f_{11}$ over one period using $M = 400$, for increasing values of $N$.}
\label{fig:brevef11}
\end{figure}

\begin{figure}[H]
\hspace{-2cm}\includegraphics[scale=0.4]{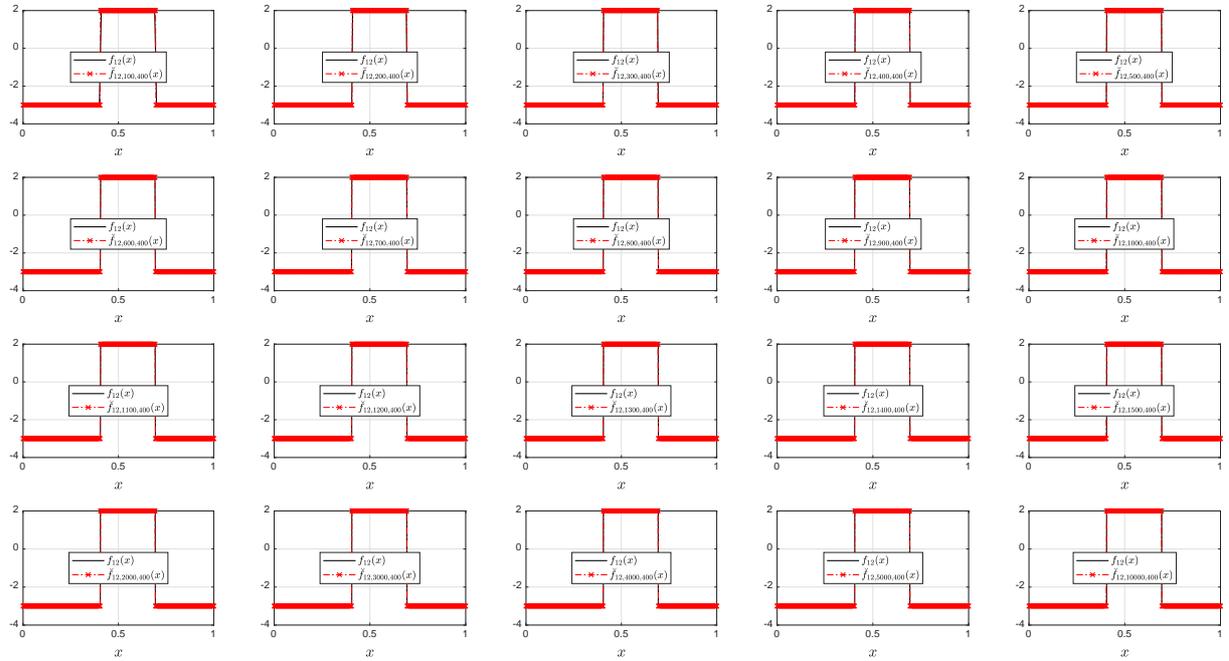}
\vspace{-1cm}\caption{Snapshots of the approximate discontinuous function $\breve f_{12}$ over one period using $M = 400$, for increasing values of $N$.}
\label{fig:brevef12}
\end{figure}

\section{Computational Algorithms}
\label{app:Alg1}
In this section we provide two computational algorithms for the fast, accurate, and economic construction of FPSI matrices and reconstructing an approximate piecewise analytic function from the FPS data.

\begin{algorithm}[ht]\footnotesize
\renewcommand{\thealgorithm}{1}
\caption{Faster, More Accurate, and More Economic Construction of $\Fthe$}
\label{alg:1}
\begin{algorithmic}[H]
\Require A positive real number $T$; a column vector $\bm{x}$ of even $N$-equally-spaced points.
\Ensure The elements $\theta _{l,j}, 1 \le l,j \le N$, of $\Fthe$.
\State Set $\C{N}_{\bm{x}} \leftarrow N; N - 1 \leftarrow N_{-1}; N + 2 \leftarrow  N_{+2};\; N + 3 \leftarrow N_{+3};\; N/2 \leftarrow N_{\div 2};$\quad \Comment{$\C{N}_{\bm{x}}$ is the number of elements of $\bm{x}$.}\\
$\F{O}_N \leftarrow \Fthe; 2 \pi/T \leftarrow c_1;$\quad \Comment{$\F{O}_N$ is the zeros matrix of size $N$.}\\
$[-N_{\div 2}:N_{\div 2}-1]_{\neq 0} \leftarrow \bm{K}; i/c_1 \leftarrow c_2;$\quad \Comment{The notation $[\cdot]_{\neq 0}$ means excluding $0$.}\\
$\exp((c_1 i \bm{K}) \otimes \bm{x}) \leftarrow \F{A};\; 1 - \F{A}_{2:N,:} \leftarrow \F{B};$\quad \Comment{$\F{Z}_{n,:}$ is a row vector whose elements form the $n$th row of a matrix $\F{Z}$.}
\For{$l = 2:N$}
\State Set $l - 1 \leftarrow l_{-1};\;1:\left\lfloor {\left( {l + 3} \right)/2} \right\rfloor \leftarrow \bm{C};\;2:\left\lfloor {\left( {N_{+3} - l} \right)/2} \right\rfloor \leftarrow \bm{D};$ \Comment{$\left\lfloor  \cdot  \right\rfloor$ is the floor function.}
\For{$j = \bm{C}$}
\State $\sum {\left[\F{B}_{l_{-1},:} \oslash \left(\bm{K} \odot \F{A}_{j,:}\right)\right]} \leftarrow \theta_{l,j};$\quad \Comment{$\sum{}$ is the sum of array elements.} 
\EndFor
\State Set $\theta_{l,\bm{C}} \leftarrow \theta_{l,l+1-\bm{C}};$
\If {$l \le N_{-1}$} 
\For{$j = \bm{D}$}
\State $l_{-1} + j \leftarrow k;\; \sum {\left[ {\F{B}_{{l_{ - 1}},:} \oslash (\bm{K} \odot \F{A}_{{k},:)}} \right]} \leftarrow \theta_{l,k};$
\EndFor
\State $\theta_{l,l_{-1}+\bm{D}} \leftarrow \theta_{l,N_{+2}-\bm{D}};$
\EndIf
\State $\bm{x}_l + \text{Re}(c_2 \theta_{l,:}) \leftarrow \theta_{l,:};$\quad \Comment{\text{Re} is the real part of a complex number.}
\EndFor
\State $\Fthe/N \leftarrow \Fthe;$
\end{algorithmic}
\end{algorithm}

%
\begin{algorithm}[H]\footnotesize
\renewcommand{\thealgorithm}{2}
\caption{Reconstruction of the Approximate piecewise analytic Function $\breve f_{N,M}$ from the FPS Data}
\label{alg:2}
\begin{algorithmic}[1]
\Require Numbers $T \in \MBR^+, M \in \MBZ^+$; equally-spaced column vector $\bm{y}_M^+$; Fourier interpolant values column vector $\bm{INf} = I_Nf(\bm{y}_M^+)$; user-defined tolerances $\tilde \epsilon, \varepsilon$.
\Ensure The approximate piecewise analytic function $\breve f$.
\State Set $\emptyset \leftarrow \Xi; M^{-} \leftarrow M - 1; \emptyset \leftarrow \Lambda; \emptyset \leftarrow \rho_1; \bmzer \leftarrow [\rho_2; \rho_3]$;\quad \Comment{$\emptyset$ denotes the empty vector.}
\State $\indmin \bm{INf} \leftarrow \nu_{1}; \indmax \bm{INf} \leftarrow \nu_{2}$;
\State Calculate $I_N^{\min}f$ and $I_N^{\max}f$ using the CPSLSM;\quad \Comment{See \ref{sec:DTD2}.}
\State $\frac{1}{2} \left(I_N^{\max}f + I_N^{\min}f\right) \leftarrow \mu; \left| {\bm{INf} - \mu \bmone_{M+1}} \right| \leftarrow d_1; \min d_1 \leftarrow d_2; \epsilon = \tilde \epsilon\,(I_N^{\max}f - I_N^{\min}f)$;
\If {$d_2 \le \epsilon$} 
\State $\ind (d_1 \le \epsilon) \leftarrow \Lambda; \rho_2 = (\Lambda==M^{-}); \Lambda(\rho_2) = M; y_{M,\Lambda} \leftarrow \Xi; \rho_1 = \Lambda;$
\EndIf
\If {$\left|\Lambda\right| < 2$}\quad \Comment{$\left|\Lambda\right|$ is the length of $\Lambda$.} 
\State Calculate $I_N^{{\text{aux}}}f({\bm{y}_M^+})$ using Eq. \eqref{eq:INaux1};
\State Find $J_{1:2}: I_N^{{\text{aux}}}f(y_{M,J_l}) - I_N^{{\text{aux}}}f(y_{M,J_l+1}) \ne 0,\; l = 1, 2$;
   \If {$J_2 = M^{-}$ and $\text{any}(\rho_2) = 0$}\quad \Comment{$\begin{array}{l}
   \text{any}(\F{A})\text{ returns }1\text{ if any of the elements of }\F{A}\\
    \text{is a nonzero number, and returns }0\text{ otherwise}.
   \end{array}$}
      \State $\Xi = [\Xi, T]; \rho_3 = 1;$
   \EndIf
   \If {$\left|\Xi\right| < 2$}
      \If {$\rho_1 = \emptyset$ and $\rho_3 = 0$}
         \State $\frac{1}{2} \left(y_{M,J_{1:2}} + y_{M,J_{1:2}+1}\right) \leftarrow \Xi$;
      \Else
         \For{$l = 1:2$}
            \If {$y_{M,J_l} - \Xi > \varepsilon$\; or\; $\Xi - y_{M,J_l+1}  > \varepsilon$} 
               \State $\text{sort}\left(\left[\Xi; \frac{1}{2} \left(y_{M,J_{l}} + y_{M,J_{l}+1}\right)\right]\right) \leftarrow \Xi$;\quad \Comment{sort($\F{A}$) sorts the elements of $\F{A}$ in ascending order.}
  	        \EndIf
         \EndFor
      \EndIf
   \EndIf
\EndIf
\State {$\ind (\bm{INf} > \mu) \leftarrow \bar J_1; \ind (\bm{INf} < \mu) \leftarrow \bar J_2;$\\ 
$\text{median}\left(\bm{INf}_{\bar J_1}\right) \leftarrow \breve f^{\max}; \text{median}\left(\bm{INf}_{\bar J_2}\right) \leftarrow \breve f^{\min}$;\quad \Comment{median gives the median value of an array.}}
\If {$\bm{INf}_0 > \mu$} 
\State {$\left\{ \begin{array}{l}
{{\breve f}^{\max }},\quad 0 \le t < {{\Xi }_1} \vee {{\Xi }_2} \le t \le T,\\
{{\breve f}^{\min }},\quad {{\Xi }_1} \le t < {{\Xi }_2}
\end{array} \right. \leftarrow \breve f$;}
\Else
\State {$\left\{ \begin{array}{l}
{{\breve f}^{\min }},\quad 0 \le t < {{\Xi }_1} \vee {{\Xi }_2} \le t \le T,\\
{{\breve f}^{\max }},\quad {{\Xi }_1} \le t < {{\Xi }_2}
\end{array} \right. \leftarrow \breve f$;}
\EndIf
\end{algorithmic}
\end{algorithm}

\section{Computational Complexity and Speed of FPSI Matrices}
\label{subsec:CCAA1}
Most parts of Algorithm \ref{alg:1} are optimized and arranged to work on chunks of vectors and matrices; thus, their efficiency increases by allowing vectorized operations. To analyze the computational cost of the algorithm, note that the first $3$ lines require $8$ arithmetic operations. The Kronecker product in Line $4$ requires $N (N-1)$ multiplications; therefore, the line requires $1+3N (N-1)$ arithmetic operations. Line $6$ required $5$ arithmetic operations. Line $8$ requires $3 N-4$ operations. Line $10$ requires $1 + \left\lfloor {\frac{{l + 3}}{2}} \right\rfloor $ additions and subtractions. Line $13$ requires $3 (N-1)$ arithmetic operations. Line $15$ requires $\left\lfloor {\frac{{N + 3 - l}}{2}} \right\rfloor  - 1$ additions and subtractions and Line $17$ requires $2 N$ additions and multiplications. Therefore, the for loop in Lines $5-18$ require
\begin{align*}
&\mathop \sum \limits_{l = 2}^{N - 1} \left( {\mathop \sum \limits_{j = 2}^{\left\lfloor {\frac{1}{2}\left( { - l + N + 3} \right)} \right\rfloor } 3\left( {N - 1} \right) + \left\lfloor {\frac{1}{2}\left( { - l + N + 3} \right)} \right\rfloor  + \left( {3N - 4} \right)\left\lfloor {\frac{{l + 3}}{2}} \right\rfloor + \left\lfloor {\frac{{l + 3}}{2}} \right\rfloor  + 2N + 5} \right) + \left( {3N - 4} \right)\left\lfloor {\frac{{N + 3}}{2}} \right\rfloor\\
&+ \left\lfloor {\frac{{N + 3}}{2}} \right\rfloor  + 2N + 6 = 3\left( {N - 1} \right)\left\lfloor {\frac{{N + 3}}{2}} \right\rfloor  + 2N + 6 + \left\{ \begin{array}{l}
0,\quad N = 2,\\
\frac{3}{4}\left( {N - 2} \right)\left( {N\left( {2N + 7} \right) + 2} \right),\quad N \ge 4,
\end{array} \right.
\end{align*}
arithmetic operations. Since Line $19$ requires $N^2$ divisions, the exact total cost of the algorithm, TC$_{\text{new}}$, is 
\[\text{TC}_{\text{new}} = 3\left( {N - 1} \right)\left\lfloor {\frac{{N + 3}}{2}} \right\rfloor  + N\left( {4N - 1} \right) + 15 + \left\{ \begin{array}{l}
0,\quad N = 2,\\
\frac{3}{4}\left( {N - 2} \right)\left( {N\left( {2N + 7} \right) + 2} \right),\quad N \ge 4
\end{array} \right. = O\left( {\frac{3}{2}{N^3}} \right),\quad {\text{as }}N \to \infty .\]
The operational count of (\cite[Algorithm 3.1]{elgindy2019high}) was roughly estimated to be of $O\left( {\frac{1}{2}{N^3}} \right)$, for large values of $N$, however, if we attempt to calculate its exact total cost, TC$_{\text{old}}$, we count $6$ arithmetic operations in the precomputation of the constants $\displaystyle{\frac{Ti}{2 \pi}}$ and $\displaystyle{\frac{2 \pi i}{T}}$. Line $3$ of that algorithm would now require $\displaystyle{\left( {11N + 3} \right)\left( {\left\lfloor {\frac{{l - 1}}{2}} \right\rfloor  + 2} \right)}$ arithmetic operations. Line $4$ requires $\displaystyle{\left\lfloor {\frac{{l - 1}}{2}} \right\rfloor  + 1}$ subtractions. Line $6$ requires $\displaystyle{\left( {11N + 4} \right)\left( {\left\lfloor {\frac{1}{2}\left( { - l + N - 1} \right)} \right\rfloor  + 1} \right)}$ arithmetic operations, and Line $7$ requires $\displaystyle{2\left\lfloor {\frac{1}{2}\left( { - l + n - 1} \right)} \right\rfloor}$ additions and subtractions. Therefore, a precise estimate of TC$_{\text{old}}$ would be 
\[\text{TC}_{\text{old}} = \left( {11N + 4} \right)\left\lfloor {\frac{N}{2}} \right\rfloor  + 11N + 9 + \left\{ \begin{array}{l}
0,\quad N = 2,\\
\frac{1}{2}\left( {N - 2} \right)\left( {\left( {11N + 38} \right)N + 8} \right),\quad N \ge 4
\end{array} \right. = O\left( {\frac{{11}}{2}{N^3}} \right),\quad {\text{as }}N \to \infty.\]
Hence, ${\text{TC}}_{{\text{new}}}/{{{\text{TC}}_{{\text{old}}}}} \sim 3/11 = 0.\overline{27}$,
i.e., the current algorithm requires approximately one-quarter of the total cost of the previous algorithm. Figure \ref{fig:Fig3_TimeComp1} shows a comparison between the elapsed time (ET) to perform (\cite[Direct Formulas (3.3) and Algorithm 3.1]{elgindy2019high}) and the current algorithm. The calculated execution times were measured multiple times, and the data are shown as the median of the time measurements in seconds (s). For $N = 200$, the ET of (\cite[Algorithm 3.1]{elgindy2019high}), ET$_{\text{old}}$, was approximately $0.359$ s, whereas the ET recorded for the current algorithm, ET$_{\text{new}}$, was approximately $0.084$ s. Thus, $\text{ET}_{\text{new}}/{\text{ET}_{\text{old}}} \approx 0.234$, as nearly expected. The results clearly demonstrate that the proposed algorithm is superior to (\cite[Direct Formulas (3.3) and Algorithm 3.1]{elgindy2019high}) in terms of speed and computational cost.

\begin{figure}[ht]
\centering
\includegraphics[scale=0.25]{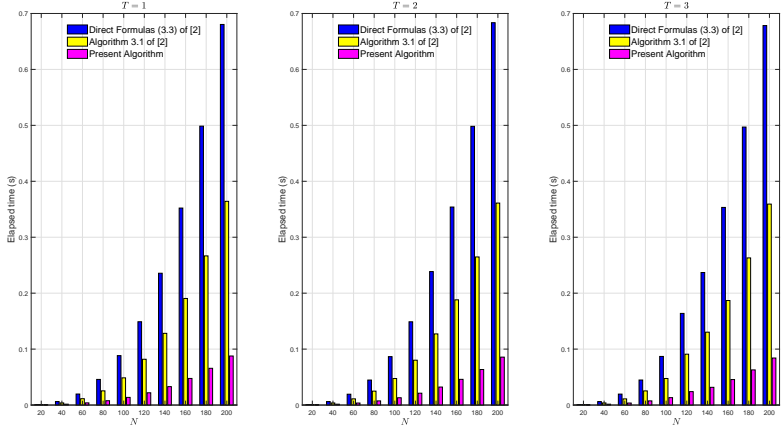}
\caption{ET of constructing $\Fthe$ using (\cite[Direct Formulas (3.3) and Algorithm 3.1]{elgindy2019high}) and the present algorithm, for $T = 1:3$ and $N = 20(20)200$.}
\label{fig:Fig3_TimeComp1}
\end{figure}

\section{Efficient and Stable Computation of SG Matrices}
\label{sss:ECSGM1}
It is noteworthy to mention that ${}_k\F{P}$ and ${}_k\hat{\F{P}}$ are directly related to the usual first-order Gegenbauer integration matrices (GIMs) in barycentric form $\F{P}$ and $\hat{ \F{P}}$ derived by \citet{Elgindy20171} by the useful identity
\begin{equation}\label{eq:qaccess1}
\left[{}_k\F{P}, {}_k\hat{\F{P}}\right] = (\tau_k^-) \left[\F{P},\hat{\F{P}}\right],\quad \forall k;
\end{equation}
see \citet[Algorithms 1 and 8]{Elgindy20171} for how to efficiently construct the pair of matrices $\F{P}$ and $\hat{\F{P}}$. Eq. \eqref{eq:qaccess1} allows us to calculate definite integrals over any partition $\F{\Gamma}_k$ in the physical space by premultiplying either $\F{P}$ or $\hat{\F{P}}$ by the constant factor $\tau_k^-$; thus, reducing the overall amount of computations required significantly.

To evaluate the necessary definite integrals of $\psi$ over the intervals $\F{\Omega}_{x_{N,0:N-1}}$, one still needs to evaluate $\C{I}_{\tau_{k-1}, \tau_{k}}^{(t^{(k)})} \psi \foralle k \in \MBK_K\backslash\{K\}$, assuming that $x_{N,N-1} \in \F{\Gamma}_K$. For instance, to piecewise integrate $\psi$ on the intervals $\F{\Omega}_{x_{N,p:n}}\,\foralls p, n \in \MBJ_N: \{x_{N,p:n}\} \subset \F{\Gamma}_2$, one needs first to calculate $\C{I}_{\F{\Gamma}_1}^{(t^{(k)})} \psi$ using the row vector ${{}_1{\F{P}_{N_k+1}}}$ before using ${}_2\F{P}$ to estimate the required integrals on $[\tau_1, x_{N,i}]_{i = p:n}$. One can similarly calculate ${{}_k{\F{P}_{N_k+1}}}\,\forall k \in \MBK_K$ using the useful formula
\begin{equation}
{{}_k{\F{P}_{N_k+1}}} = (\tau_k^-) \F{P}_{N_k+1},
\end{equation}
where $\F{P}_{N_k+1}$ is a row vector whose elements form the $(N_k+1)$st-row of the barycentric GIM constructed using \cite[Algorithm 6 or 7]{Elgindy20171}.

Figure \ref{fig:FAccuracy4_fn6} shows the SG quadrature error infinity- and -Euclidean norms on a log-lin scale of the definite integrals of the square wave function $f_6$ when successively integrated over the intervals $\F{\Omega}_{x_{N,0: N-1}}$ using only two GG points and the Gegenbauer parameter (index) value $\alpha = -0.1$. Note that the obtained integral approximations are accurate to almost full precision in double-precision floating-point arithmetic because $f_6$ is a linear piecewise function and the $2$-point SG quadrature is exact for polynomials of degree at most three. For piecewise constant functions, the SG quadrature truncation error collapses for $N_k \ge 0$, as indicated by Theorem \ref{sec:erranalysgp1}, and the computational error is dominated by the maximum error in the approximate jump discontinuity points and the extreme values of the discontinuous function. This outcome is consistent with Figure \ref{fig:FAccuracy4SGG_fn6}, which shows plots of the $2$-point SG quadrature error infinity- and -Euclidean norms on the log-lin scale of the reconstructed square wave function $\breve f_{6,N,400}$ when successively integrated over the intervals $\F{\Omega}_{x_{N,0: N-1}}$ for $\alpha = -0.1$ and several increasing values of $N$. Clearly, the quadrature error infinity-norm has the same maximum error order of the approximate jump discontinuity points and extreme values recorded in Tables \ref{tab:OFICAAPOD2} and \ref{tab:OFICAAPOD3} whenever an approximation error exists. In the absence of approximation errors, the quadrature error infinity-norm approaches the machine epsilon. The figure also shows the corresponding FPSQ error infinity-norms using FPSI matrices of the same size as the mesh grid, as recorded in Table \ref{tab:GGQf6to12}, where we observe the slow decay of the errors as the mesh grid size increases, regardless of how well the reconstructed square wave function is set up. In fact, Table \ref{tab:GGQf6to12} asserts this finding as it shows that the calculated absolute errors in the definite integrals of $f_6, \ldots, f_{12}$ decay, at best, like $O\left(N^{-1/2}\right)$, when they are approximated by FPSQs using the reconstructed $\breve f_{6,N,M}, \ldots, \breve f_{12,N,M}$ obtained by Algorithm \ref{alg:2} with $M = 100$ and $M = 400$.

To support our analysis further, consider the problem of evaluating $\C{I}_{\F{\Omega}_{x_{N,0: N-1}}}^{(t)} \psi$ when
\[s(t) = \left\{ \begin{array}{l}
\frac{2}{3}(t + 1),\quad0 \le t < {\xi _1},\\
 - \frac{1}{{10}}{(t - {\xi _1})^2} + \frac{2}{3}({\xi _1} + 1),\quad{\xi _1} \le t < {\xi _2},\\
\sin \left( {\frac{1}{4}(t - {\xi _2})} \right) - \frac{1}{{10}}{({\xi _2} - {\xi _1})^2} + \frac{2}{3}({\xi _1} + 1),\quad{\xi _2} \le t < T,
\end{array} \right.\quad \text{and}\quad u(t) = \left\{ \begin{array}{l}
0,\quad0 \le t < {\xi _1},\\
2,\quad{\xi _1} \le t < {\xi _2},\\
0,\quad{\xi _2} \le t < T.
\end{array} \right.\]
Here, we can calculate the exact required integrals of $\psi$ because both $s$ and $u$ are available in closed form and their antiderivatives can be written in terms of elementary functions. The plots of $s, u, \psi$ and the error plot for approximating the required integrals using the SG quadratures are shown in Figure \ref{fig:PsiDemo1} for some parameter values. The maximum absolute error recorded was approximately $2.66$E–$15$, which demonstrates the high accuracy of the SG quadratures for exact input data and their exponential convergence using a relatively small number of quadrature nodes. To test the stability of the SG quadratures, we perturb the jump discontinuity points such that $\xi_{1:2} = \tilde \xi_{1:2} + \delta_{1:2}\,\foralls \delta_{1:2}^t \in \canczer{\MBR}^2$, and denote the perturbed state control and state derivative variables by $\tilde s, \tilde u$, and $\tilde \psi$, respectively. Figure \ref{fig:PsiDemo2} shows the exact and perturbed functions for $\delta_{1:2} = [$-$1$E-$5, 1$E-$6]$ in addition to the error plots generated using the same parameter values. The maximum absolute error recorded was approximately $1.08$E-$05$, which verifies the numerical stability of the numerical scheme, as the quadrature errors are nearly equal to the maximum perturbations incorporated in the mathematical model. It is interesting to mention here that better estimates were obtained when we evaluated the definite integrals of $\psi$ over partitions determined by SGG points instead of equispaced nodes. In particular, the maximum absolute error in evaluating $\C{I}_{\F{\Omega}_{\hat t_{N_k,0:N_k}^{(k),\alpha}}}^{(t)} \psi\,\forall k \in \MBK_3$ using the SG quadratures and the same parameter values were approximately $2.22$E-$15$ assuming that $\tau_1 = \xi_1, \tau_2 = \xi_2$, and $K = 3$; see Figure \ref{fig:PsiDemo3}.

\begin{figure}[ht]
\centering
\includegraphics[scale=0.2]{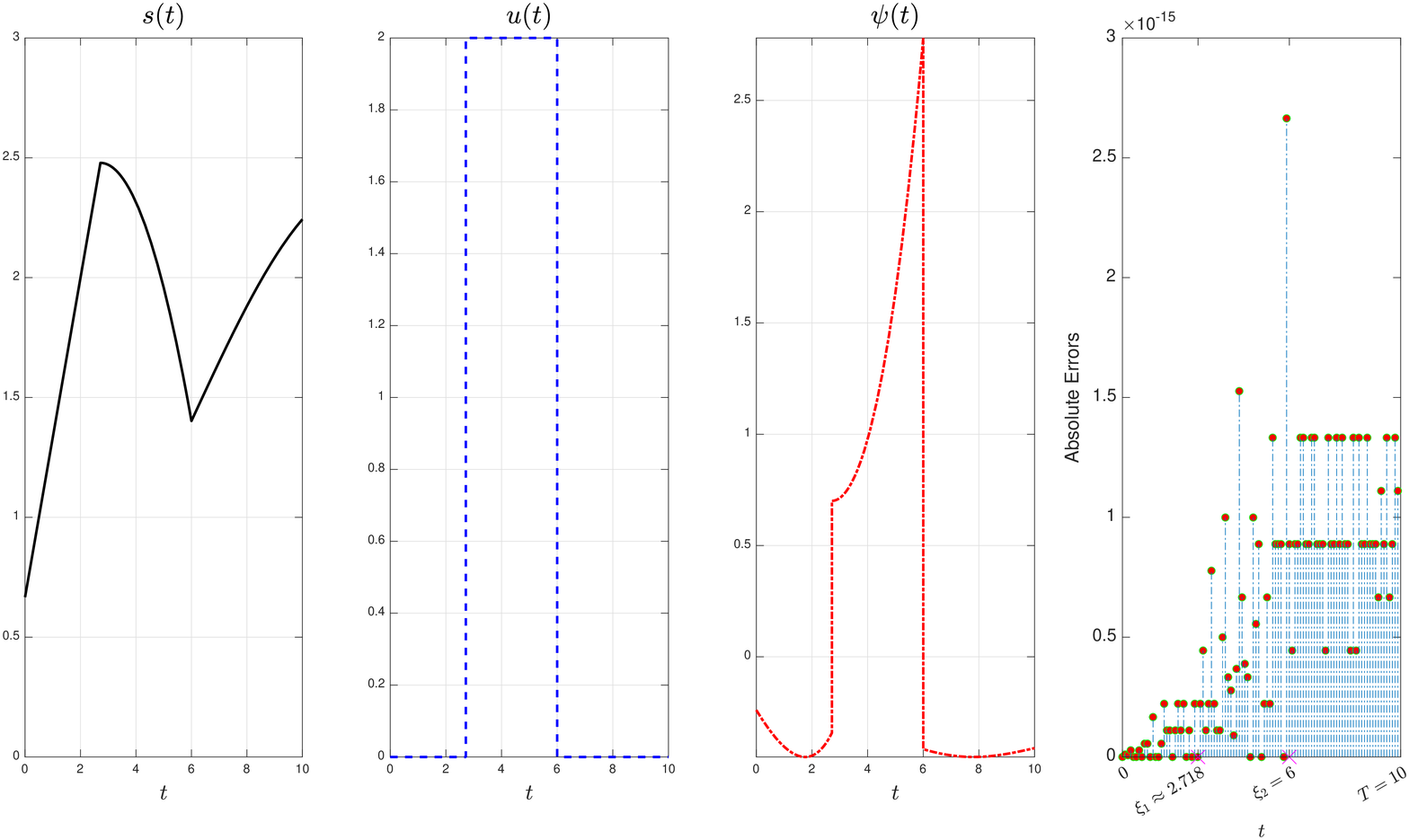}
\caption{Plots of $s, u, \psi$ and the errors plot of approximating $\C{I}_{\F{\Omega}_{x_{100,0: 99}}}^{(t)} \psi$ using the SG quadratures for the parameter values $T = 10, \Sin = 3, \mu_{\max} = 1, k_s = 2.5, \alpha = 1/2, \xi_{1:2} = [e, 6]$, and $N_{1:3} = [18]_3$.}
\label{fig:PsiDemo1}
\end{figure}

\begin{figure}[ht]
\centering
\includegraphics[scale=0.2]{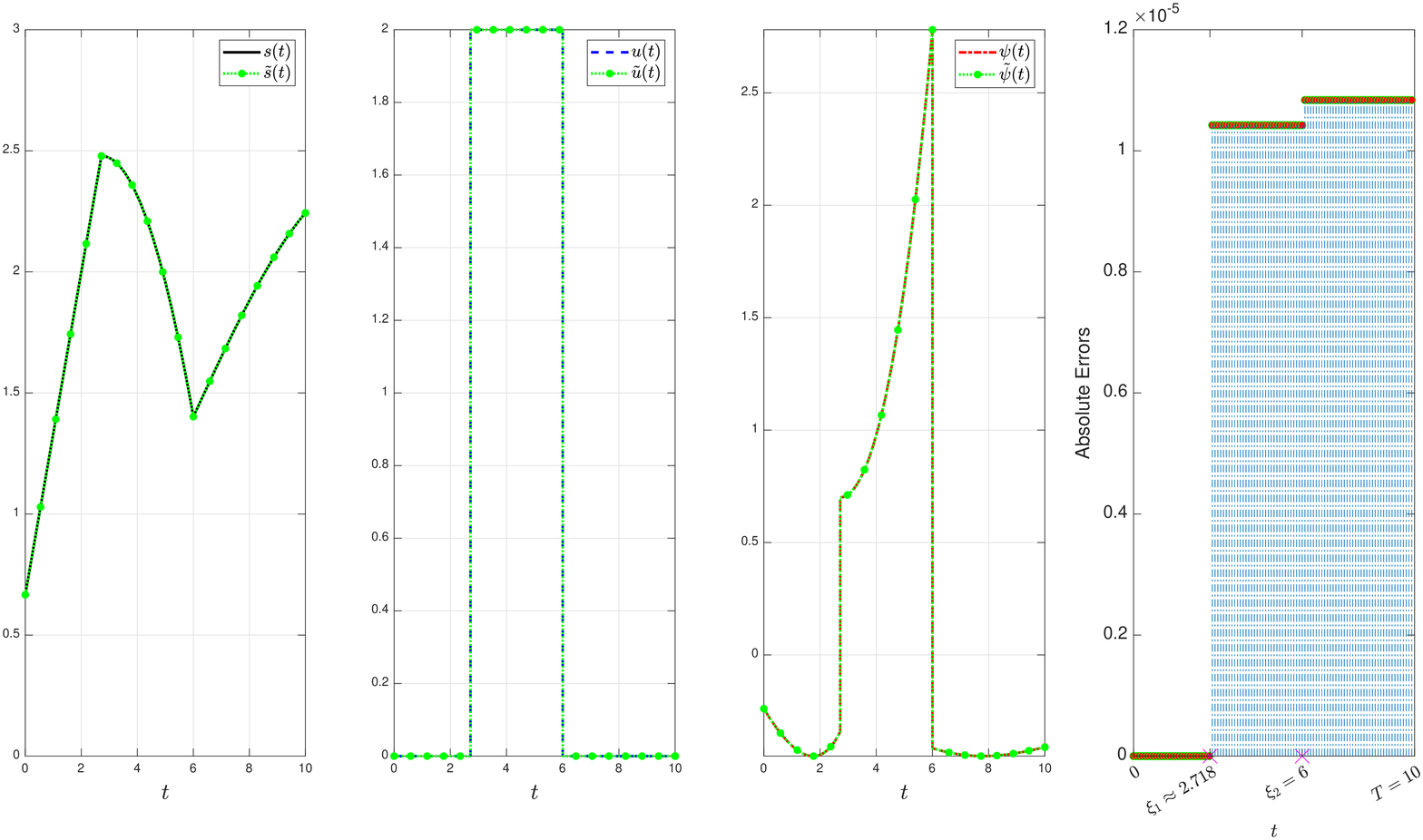}
\caption{Plots of $s, u, \psi$ and their perturbations (in green color) in addition to the errors plot of approximating $\C{I}_{\F{\Omega}_{x_{100,0: 99}}}^{(t)} \tilde \psi$ using the SG quadratures for the parameter values $T = 10, \Sin = 3, \mu_{\max} = 1, k_s = 2.5, \alpha = 1/2, \xi_{1:2} = [e, 6], \delta_{1:2} = [$-$1$E-$5, 1$E-$6]$, and $N_{1:3} = [18]_3$.}
\label{fig:PsiDemo2}
\end{figure}

\begin{figure}[ht]
\centering
\includegraphics[scale=0.3]{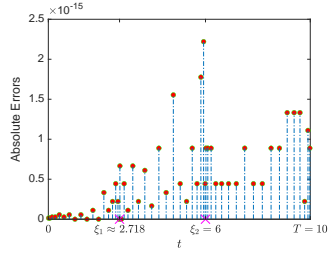}
\caption{Error plot of approximating $\C{I}_{\F{\Omega}_{\hat t_{N_k,0:N_k}^{(k),\alpha}}}^{(t)} \psi$ using the SG quadratures for the parameter values $T = 10, \Sin = 3, \mu_{\max} = 1, k_s = 2.5, \alpha = 1/2, \xi_{1:2} = [e, 6]$, and $N_{1:3} = [18]_3$.}
\label{fig:PsiDemo3}
\end{figure}

\begin{figure}[ht]
\centering
\includegraphics[scale=0.35]{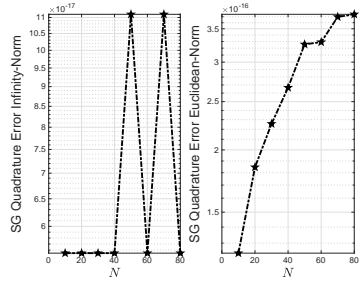}
\caption{Error infinity- and -Euclidean norms of the $2$-point SG quadrature with $\alpha = -0.1$ in log-lin scale of the square wave function $f_6$ when successively integrated over the intervals $\F{\Omega}_{x_{N,0: N-1}}\,\forall\,N = 10, 20, 40$, and $80$.}
\label{fig:FAccuracy4_fn6}
\end{figure}

\begin{figure}[ht]
\centering
\includegraphics[scale=0.2]{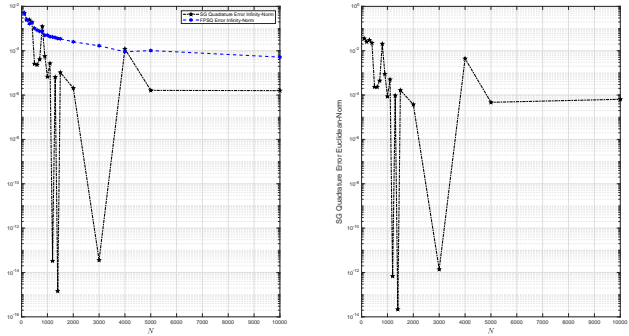}
\caption{Error infinity- and -Euclidean norms of the $2$-point SG quadrature with $\alpha = -0.1$ in log-lin scale of the reconstructed square wave function $\breve f_{6,N,400}$ when successively integrated over the intervals $\F{\Omega}_{x_{N,0: N-1}}$, for several increasing values of $N$. The left plot also shows the corresponding FPSQ error infinity-norms using FPSI matrices of size $N$.}
\label{fig:FAccuracy4SGG_fn6}
\end{figure}

\begin{table}[ht]
\caption{Observed absolute errors in the definite integrals of functions $f_6, \ldots, f_{12}$ when they are approximated by FPSQs using the reconstructed $\breve f_{6,N,M}, \ldots, \breve f_{12,N,M}$ obtained by Algorithm \ref{alg:2} with $M = 100$ and $M = 400$. All approximations are rounded to 5 significant digits.} 
\centering 
\resizebox{1\columnwidth}{!}{%
\begin{tabular}{*{9}{c}} 
\toprule
 & $N$ & $\left\|\C{I}_{\bm{x}_N} f_6-{\C{I}_{\bm{x}_N} \breve f_{6,N,M}}\right\|_{\infty}$ & $\left\|\C{I}_{\bm{x}_N} f_7-{\C{I}_{\bm{x}_N} \breve f_{7,N,M}}\right\|_{\infty}$ & $\left\|\C{I}_{\bm{x}_N} f_8-{\C{I}_{\bm{x}_N} \breve f_{8,N,M}}\right\|_{\infty}$ & $\left\|\C{I}_{\bm{x}_N} f_9-{\C{I}_{\bm{x}_N} \breve f_{9,N,M}}\right\|_{\infty}$ & $\left\|\C{I}_{\bm{x}_N} f_{10}-{\C{I}_{\bm{x}_N} \breve f_{10,N,M}}\right\|_{\infty}$ & $\left\|\C{I}_{\bm{x}_N} f_{11}-{\C{I}_{\bm{x}_N} \breve f_{11,N,M}}\right\|_{\infty}$ & $\left\|\C{I}_{\bm{x}_N} f_{12}-{\C{I}_{\bm{x}_N} \breve f_{12,N,M}}\right\|_{\infty}$\\ [0.5ex] 
\midrule 
\multirow{20}{*}{\rotatebox{90}{$M = 100$}} & 100 & 7.7031E-03 & 2.8499E-03 & 6.4450E-03 & 1.6746E-02 & 3.9564E-03 & 3.2151E-02 & 1.5815E-02\\
& 200 & 3.0731E-03 & 5.6578E-03 & 4.1025E-02 & 2.4944E-02 & 1.0960E-02 & 2.6135E-02 & 2.2157E-02\\
& 300 & 1.6667E-03 & 5.0021E-03 & 1.9992E-02 & 1.8794E-02 & 7.1784E-03 & 2.5017E-02 & 2.1124E-02\\
& 400 & 1.2896E-03 & 7.4427E-03 & 2.0185E-02 & 3.6597E-02 & 4.2957E-03 & 4.7179E-02 & 2.2133E-02\\
& 500 & 1.0332E-03 & 4.3404E-03 & 1.8230E-02 & 2.4678E-02 & 6.3296E-03 & 3.2689E-02 & 2.8087E-02\\
& 600 & 8.9546E-04 & 5.8339E-03 & 2.3331E-02 & 2.8593E-02 & 5.2307E-03 & 3.7298E-02 & 2.4069E-02\\
& 700 & 6.8502E-04 & 4.6952E-03 & 1.8076E-02 & 2.3719E-02 & 5.6152E-03 & 3.8174E-02 & 2.3842E-02\\
& 800 & 5.9732E-04 & 5.0583E-03 & 2.3303E-02 & 2.5245E-02 & 5.1227E-03 & 3.5575E-02 & 2.4860E-02\\
& 900 & 5.5556E-04 & 4.9998E-03 & 2.0001E-02 & 2.3055E-02 & 5.9836E-03 & 3.8001E-02 & 2.7461E-02\\
& 1000 & 3.1066E-04 & 5.5535E-03 & 2.0603E-02 & 2.8201E-02 & 6.4653E-03 & 3.8083E-02 & 2.5833E-02\\
& 1100 & 4.5455E-04 & 4.6968E-03 & 1.9224E-02 & 2.4954E-02 & 5.2746E-03 & 3.9082E-02 & 2.0363E-02\\
& 1200 & 4.1667E-04 & 5.4165E-03 & 2.1667E-02 & 2.6671E-02 & 5.4272E-03 & 3.9515E-02 & 2.1901E-02\\
& 1300 & 3.6638E-04 & 4.8359E-03 & 1.8955E-02 & 2.4179E-02 & 5.5601E-03 & 3.9710E-02 & 2.2902E-02\\
& 1400 & 2.6414E-04 & 5.0957E-03 & 2.1246E-02 & 2.6127E-02 & 5.6021E-03 & 3.9737E-02 & 2.4127E-02\\
& 1500 & 3.1292E-04 & 5.0001E-03 & 2.0000E-02 & 2.4036E-02 & 5.1751E-03 & 4.0622E-02 & 2.5140E-02\\
& 2000 & 2.7553E-04 & 5.0712E-03 & 2.0691E-02 & 2.5050E-02 & 5.5265E-03 & 4.1313E-02 & 2.5945E-02\\
& 3000 & 1.5649E-04 & 5.1667E-03 & 2.0667E-02 & 2.5569E-02 & 5.3667E-03 & 3.7809E-02 & 2.5086E-02\\
& 4000 & 9.0703E-05 & 5.1131E-03 & 2.0131E-02 & 2.4526E-02 & 5.6293E-03 & 4.0266E-02 & 2.4683E-02\\
& 5000 & 9.3887E-05 & 5.0319E-03 & 2.0229E-02 & 2.5145E-02 & 5.4800E-03 & 3.8022E-02 & 2.4418E-02\\
& 10000 & 7.5134E-05 & 5.0287E-03 & 2.0062E-02 & 2.4810E-02 & 5.6274E-03 & 3.8548E-02 & 2.3900E-02\\
\midrule
\multirow{20}{*}{\rotatebox{90}{$M = 400$}} & 100 & 4.9996E-03 & 2.8670E-03 & 6.3275E-03 & 1.8210E-02 & 3.9583E-03 & 3.2113E-02 & 1.4180E-02\\
& 200 & 2.5001E-03 & 8.3199E-04 & 5.7223E-03 & 5.4374E-03 & 3.2109E-03 & 3.3962E-02 & 1.4262E-02\\
& 300 & 1.6449E-03 & 1.6646E-03 & 6.6748E-03 & 2.1666E-03 & 5.1743E-04 & 3.0778E-02 & 5.2880E-03\\
& 400 & 1.8965E-03 & 7.1559E-04 & 1.6532E-03 & 3.7499E-03 & 5.8589E-04 & 2.8770E-03 & 4.6719E-03\\
& 500 & 1.0256E-03 & 3.3118E-04 & 2.2930E-03 & 4.6999E-03 & 1.3704E-03 & 2.0028E-02 & 2.8211E-03\\
& 600 & 8.5638E-04 & 8.3282E-04 & 3.3354E-03 & 4.0511E-03 & 1.3538E-03 & 2.9835E-02 & 6.7314E-03\\
& 700 & 7.5520E-04 & 1.8381E-03 & 6.6682E-03 & 2.3560E-03 & 5.0315E-04 & 2.3221E-02 & 4.8724E-03\\
& 800 & 7.7339E-04 & 1.4188E-03 & 1.0400E-02 & 4.4291E-03 & 1.5498E-03 & 2.1083E-02 & 7.3088E-03\\
& 900 & 5.0073E-04 & 1.6669E-03 & 6.6658E-03 & 8.3327E-04 & 7.6506E-04 & 1.6931E-02 & 2.7026E-03\\
& 1000 & 5.0672E-04 & 1.2866E-03 & 4.6635E-03 & 1.5994E-03 & 9.7877E-04 & 1.9428E-02 & 5.1536E-03\\
& 1100 & 4.2765E-04 & 1.0556E-03 & 4.6855E-03 & 2.2419E-03 & 1.1314E-03 & 1.8328E-02 & 2.7325E-03\\
& 1200 & 4.1667E-04 & 1.2501E-03 & 4.9995E-03 & 1.9188E-03 & 4.7075E-04 & 1.1344E-02 & 4.6587E-03\\
& 1300 & 3.9110E-04 & 9.8973E-04 & 3.6005E-03 & 1.0884E-03 & 6.6346E-04 & 1.2153E-02 & 2.4482E-03\\
& 1400 & 3.5714E-04 & 1.5477E-03 & 6.5332E-03 & 5.3092E-04 & 8.8572E-04 & 2.0928E-02 & 4.6611E-03\\
& 1500 & 3.4382E-04 & 1.0001E-03 & 3.9997E-03 & 5.5662E-04 & 9.8209E-04 & 1.8945E-02 & 2.5315E-03\\
& 2000 & 2.5207E-04 & 1.0838E-03 & 4.5694E-03 & 3.0036E-04 & 6.4333E-04 & 1.3095E-02 & 3.7522E-03\\
& 3000 & 1.6667E-04 & 1.1667E-03 & 4.6666E-03 & 6.7591E-04 & 9.4595E-04 & 1.4990E-02 & 3.2590E-03\\
& 4000 & 8.9983E-05 & 1.3978E-03 & 5.1346E-03 & 7.0223E-04 & 6.5124E-04 & 1.2374E-02 & 3.0360E-03\\
& 5000 & 1.0163E-04 & 1.2332E-03 & 5.0293E-03 & 1.1999E-04 & 7.3058E-04 & 1.1753E-02 & 3.9238E-03\\
& 10000 & 5.1581E-05 & 1.2287E-03 & 4.8663E-03 & 2.5096E-04 & 8.2991E-04 & 1.2116E-02 & 3.6048E-03\\
\bottomrule 
\end{tabular}
}
\label{tab:GGQf6to12} 
\end{table}

%
\bibliographystyle{model1-num-names}
\bibliography{Bib}
\end{document}